\newtheorem{thm}{Theorem}
\newtheorem{lem}[thm]{Lemma}
\newtheorem{prop}[thm]{Proposition}
\theoremstyle{definition}
\newtheorem{defn}[thm]{Definition}
\theoremstyle{remark}
\newtheorem{rem}[thm]{Remark}
\newcommand{\R} {\mathbb{R}}
\newcommand{\C} {\mathbb{C}}
\newcommand{\E} {\mathbb{E}}
\DeclareMathOperator{\diag}{diag}
\DeclareMathOperator{\Tr}{Tr}
\DeclareMathOperator{\re}{\mathrm{Re}}
\DeclareMathOperator{\im}{\mathrm{Im}}
\newcommand{\beq}{ \begin{equation} }
\newcommand{\eeq}{ \end{equation} }
\newcommand{\dd}{\mathrm{d}}
\renewcommand{\P}{\mathbb{P}}
\numberwithin{equation}{section} %\numberwithin{lem}{section}
\numberwithin{thm}{section}
\begin{document}
\include{amsthm_sc}

\title{Fluctuations of the free energy of the spherical Sherrington--Kirkpatrick model with heavy-tailed interaction}

%%=============================================================%%
%% GivenName	-> \fnm{Joergen W.}
%% Particle	-> \spfx{van der} -> surname prefix
%% FamilyName	-> \sur{Ploeg}
%% Suffix	-> \sfx{IV}
%% \author*[1,2]{\fnm{Joergen W.} \spfx{van der} \sur{Ploeg} 
%%  \sfx{IV}}\email{iauthor@gmail.com}
%%=============================================================%%

\author{Taegyun Kim\footnote{Department of Mathematical Sciences, KAIST, Daejeon, 305701, Korea \newline email: \texttt{ktg11k@kaist.ac.kr}}
and Ji Oon Lee\footnote{Department of Mathematical Sciences, KAIST, Daejeon, 305701, Korea
\newline email: \texttt{jioon.lee@kaist.edu}}}

%%\pacs[JEL Classification]{D8, H51}

%%\pacs[MSC Classification]{35A01, 65L10, 65L12, 65L20, 65L70}

\maketitle

\abstract{
We consider the 2-spin spherical Sherrington--Kirkpatrick model without external magnetic field where the interactions between the spins are given as random variables with heavy-tailed distribution. We show that the free energy exhibits a sharp phase transition depending on the location of the largest eigenvalue of the interaction matrix. We also prove the order of the limiting free energy and the limiting distribution of the fluctuation of the free energy for both regimes.

\textbf{keywords}:{Heavy-tailed matrix, SSK model, Free energy, Phase transition}}
\section{Introduction}
The spin glass model, initially introduced in the seminal work of Edwards and Anderson \cite{edwards1975theory}, has evolved significantly over time. One notable advancement was the development of its mean-field variant, the Sherrington--Kirkpatrick (SK) model \cite{SKmodel}. Despite its apparent simplicity, the SK model captures many fundamental properties of the real-world spin glass model, which has led to its widespread study and application in a variety of contexts. A critical aspect of the SK model is its free energy, which can be conceptualized through the replica symmetry breaking method, notably the Parisi formula \cite{Parisi}, which was later mathematically validated \cite{guerra2003broken,TalagrandParisi}.

A notable variant of the SK model is the spherical SK model, introduced by Kosterlitz, Thouless, and Jones \cite{KosterlitzThoulessJones}. Their model replaces the standard spin vectors with those uniformly distributed on a sphere. The spherical SK model has garnered attention for its applications in statistical mechanics and other fields such as information theory \cite{mezard2009information}, with significant progress made in understanding its mathematical properties, including its limiting free energy \cite{TalagrandParisiSpher} and the fluctuation of the free energy \cite{2016fluctuations}. For other variants of the SSK model, we refer to \cite{baik2021spherical} and references therein.

Although the exact formulas for the limit and the fluctuation of the free energy may depend on the details of the model, we can find one common feature in these models: The limiting free energy is a deterministic function of the inverse temperature. Technically, it can be understood as follows: (1) by the integral representation formula (e.g., Lemma 1.3 in \cite{2016fluctuations}), the partition function depends only on the eigenvalues of the interaction matrix, (2) the eigenvalues are close to the deterministic locations determined by their limiting empirical spectral distribution (ESD), (3) as a result, the limiting free energy can be well-approximated by a deterministic function of the inverse temperature.

In this paper, we consider a variant of the SSK model without external magnetic field in which the interactions between the spins are given as random variables with heavy-tailed distribution and investigate how this heavy-tailed interaction can alter the continuity of the limiting free energy. The non-spherical counterpart of such a model, i.e., the SK model with heavy-tailed interaction, was considered in \cite{cizeau1993mean,cizeau1994theory} and its properties including the phase transitions of several thermodynamic quantities were analyzed by replica method in \cite{janzen2008replica,janzen2010thermodynamics,neri2010phase}. Recently, the existence of free energy and its fluctuation in the high-temperature regime for the heavy-tailed SK model was proved in \cite{jagannath2022existence,chen2023rigorous} and its disordered chaos phenomenon at zero temperature was shown in \cite{chen2024disorder}. These results reveal the significant differences between the heavy-tailed SK model and the usual SK model. While there might be several reasons that the heavy-tailed SK or SSK model is different from other models, we can consider the following: (1) the ESDs of the heavy-tailed random matrices are markedly different from corresponding `light-tailed' Wigner random matrices, and (2) the conventional techniques such as the integration by parts cannot be used due to the unboundedness of the first a few moments (most importantly, the variance), unlike the analysis for the usual SK model, e.g., the proof of the existence of the thermodynamic limit for the free energy in \cite{guerra2002thermodynamic}.

Before describing the precise detail of our model, we recall the key results for the usual SSK model \cite{2016fluctuations}. The Hamiltonian for the SSK model is defined by
\beq
H_N(\sigma)=-\frac{1}{\sqrt{N}}\sum_{i,j=1}^N H_{ij}\sigma_i\sigma_j=-\frac{1}{\sqrt{N}}\langle\sigma, H\sigma\rangle
\eeq
for a real symmetric random matrix $H$, where $\sigma = (\sigma_1, \sigma_2, \dots, \sigma_N) \in S_N := \{ x \in \R^N : x_1^2 + \dots + x_N^2 = N \}$. The entries of $H$ are with mean $0$ and variance $1$, so the largest eigenvalue of $H$ is of order $\sqrt{N}$ with high probability. The partition function and its free energy of the SSK model is defined as
\beq
F_N \equiv F_N(\beta)=\frac{1}{N} \log{Z_N}, \quad Z_N=\int_{S_N} e^{\beta H_n(\sigma)} \dd\sigma,
\eeq
where $\beta$ is the inverse temperature and $\dd \sigma$ is the Haar measure (normalized uniform measure) on $S_N$. It is known that
\beq \label{eq:F_N_SSK}
	F_N(\beta) \to F(\beta) \equiv
	\begin{cases}
	\beta^2 & \text{ if } 0 < \beta \leq 1/2, \\
	2\beta - \frac{\log(2\beta) + \frac{3}{2}}{2} & \text{ if } \beta > 1/2,
	\end{cases}
\eeq
as $N \to \infty$. Moreover, the fluctuation of the free energy is given by Gaussian law of order $N^{-1}$ when $\beta < 1/2$ and GOE Tracy--Widom law of order $N^{-2/3}$ when $\beta > 1/2$.                                                                                      

In our model, we further assume that the entries $H$ are heavy-tailed random variables in the sense that $\P (|H_{ij}| > u) \sim u^{-\alpha}$ for some $\alpha \in (0, 2)$. The exponent $\alpha$ determines the heaviness of the tail of the distributions. 
With heavy-tailed entries, unlike the light-tailed model, the largest eigenvalue is not of the order $1$ but grows with $N$. We thus need to introduce an additional normalization factor $b_N$, which is asymptotically of order $N^{2/\alpha}$, and consider the partition function defined as
\beq \label{eq:Z_N}
Z_N=\int_{S_N} e^{b_N^{-1} \beta H_N(\sigma)} \dd\sigma.
\eeq
See Section \ref{subsec:def}, especially Definition \ref{partition function}, for the precise definition of the model and the partition function $Z_N$ with the normalization factor $b_N$. We note that the normalization factor $b_N$ ensures that the extreme eigenvalues of $b_N^{-1} H$ are of order $1$; see \cite{LargeLevy2009,soshnikov2004poisson} for more details.

\subsection{Main contribution}

In this paper, we prove the behavior of the (log-) partition function in the ``high temperature regime'' and the ``low temperature regime''. We find that
\begin{itemize}
\item (Theorem \ref{main}.(ii)) In the high temperature regime, $\log Z_N$ converges in distribution to a random variable, independent of $N$, as $N \to \infty$.
\item (Theorem \ref{main}.(iii)) In the low temperature regime, $N^{-1} \log Z_N$ converges in distribution to another random variable, independent of $N$, as $N \to \infty$.
\end{itemize}

When compared to the behavior of the free energy for the usual SSK model, we notice several contrasts. First, while for the usual SSK model the size of $\log Z_N$ is of order $N$ regardless of the temperature, it is markedly different for our model in the high temperature regime and the low temperature regime - the former is of order $1$ whereas the latter is of order $N$. Second, for the usual SSK model, the fluctuation of $\log Z_N$ is much smaller than its deterministic limit; they are of comparable order for our model.

The transition between the high temperature regime and the low temperature regime is even more striking for our model. For the usual SSK model, the behavior of the free energy is solely determined by the inverse temperature. It is in the high temperature regime if $\beta$ is below a certain threshold, $1/2$ in \eqref{eq:F_N_SSK}, and in the low temperature regime if $\beta$ is below the threshold; no randomness is involved in the phase transition. On the other hand, with heavy-tailed interaction matrix, the distinction between the two phases is more subtle. In Theorem \ref{main}, we prove that the high temperature regime is when the largest eigenvalue of the interaction matrix is above a certain threshold determined by $\beta$ and the low temperature regime is when the largest eigenvalue is below the threshold. In particular, it implies that the size of $\log Z_N$ is of order $1$ with probability $P_{\beta}$ and of order $N$ with probability $1-P_{\beta}$. The change in inverse temperature only changes probability $P_{\beta}$.

For the proof of the main result, we adapt the strategy introduced in \cite{2016fluctuations}. In this approach, the partition function is first written as a complex integral involving the eigenvalues of the interaction matrix (see \eqref{eq:integral}). Then, with the aid of several estimates on the eigenvalues of heavy-tailed matrices, the integral is approximated by applying the method of steepest descent. In the high temperature regime, the (log-) partition function can be approximated by a linear spectral statistics (LSS) of the eigenvalues, while it is governed by the largest eigenvalue in the low temperature regime.

The main technical difficulty in the analysis is due to that some spectral properties of heavy-tailed matrices are not known, e.g., its LSS with respect to a logarithmic function. To handle it, we notice that the fluctuation of several quantities, such as the trace and the Hilbert-Schmidt norm of a heavy-tailed matrix is governed by a few extreme entries of the matrix. Then, by applying well-known facts about heavy-tailed distributions, we can analyze the LSS with respect to a logarithmic function that is essential in the analysis for high temperature regime.

\subsection{Related works}

The SSK model was first introduced by Kosterlitz, Thouless, and Jones \cite{KosterlitzThoulessJones}. The limiting free energy for the SSK model was computed (nonrigorously) in \cite{KosterlitzThoulessJones} and proved by Talagrand \cite{TalagrandParisiSpher}. The fluctuation of the free energy for the SSK model was proved in \cite{2016fluctuations}. Corresponding results for several variants of the SSK model were proved in \cite{2016fluctuations,baik2017fluctuations,Baik-Lee2018,lee2023spherical,kim2023fluctuations}. Several other results were also shown, including the near-critical behavior \cite{BaikLeeWu,landon2022free,johnstone2021spin} and the overlap \cite{landon2022fluctuations}. For more results on the SSK model and its variants, we refer to \cite{baik2021spherical} and references therein.

The spectral properties of the heavy-tailed random matrix have been extensively studied in random matrix theory. The behavior of the largest eigenvalue was first proved by Soshnikov \cite{soshnikov2004poisson} for the case $\alpha < 2$ and later generalized to the case $\alpha < 4$ by Auffinger, Ben Arous, and P\'ech\'e \cite{LargeLevy2009}. Several other results were proved, including the spectrum normalization \cite{levyspectrum2008}, a central limit theorem for the Stieltjes transform \cite{2014centralheavy}, and bulk universality \cite{aggarwal2021goe}.

\subsection{Organization of the paper}
In Section \ref{sec:result}, we define our model and state the main results. In Section \ref{sec:prelim}, we introduce several important properties for heavy-tailed random matrices. In Sections \ref{sec:high} and \ref{sec:low}, we prove the main results in the high temperature regime and the low temperature regime, respectively. Several technical details, including the proofs of the lemmas, can be found in Appendix.

\section{Definitions and main results} \label{sec:result}

In this section, we precisely define our model and state the main results.

\begin{rem}[Notational remark]
Throughout the paper, we use the standard asymptotic notations such as $O$, $o$, and $\Theta$ as $N \to \infty$. We denote by the symbol $\Rightarrow$ the convergence in distribution as $N \to \infty$. We use the shorthand notation $\sum_i = \sum_{i=1}^N$.
\end{rem}

\subsection{Definitions} \label{subsec:def}

We begin by defining the heavy-tailed random variables.

\begin{defn}[Heavy-tailed random variable] \label{stable law}
  We say that a random variable $X$ is heavy-tailed with exponent $\alpha$ if $\P(|X|>u)=L(u){u^{-\alpha}}$ for some $\alpha \in (0, 2)$ and some slowly varying function $L$, i.e. $\lim_{x\to\infty} L(tx)/L(x)=1$ for all $t$. We further assume that for any $\delta > 0$, there exists $x_0$ such that $L(x) e^{x^{\delta}}$ is an increasing function on $(x_0, \infty)$. 
\end{defn}

\begin{rem}\label{rem:slow_vary}
In Definition \ref{stable law}, the assumption on the monotonicity of the function $L(x) e^{x^{\delta}}$ is purely technical, which guarantees that the difference $\lambda_1 - \lambda_2$ is bounded below with high probability (in Lemma \ref{lem:gap_eig}). This assumption is satisfied by many slowly varying functions considered in the literature, especially poly-log functions $(\log(x))^p$ for some $p \in \mathbb{R}$.
\end{rem}

Note that a heavy-tailed random variable with exponent $\alpha$ has a finite $(\alpha-\epsilon)$ moment for any $\epsilon \in (0, \alpha)$. We next define a Wigner matrix with heavy-tailed entries.

\begin{defn}[Heavy-tailed Wigner matrix] \label{stable matrix}
  We say that an $N \times N$ real symmetric matrix $M$ is a heavy-tailed Wigner matrix with exponent $\alpha$ if its upper triangular entries $M_{ij} (1 \leq i \leq j \leq N)$ are independent and identically distributed (i.i.d.) heavy-tailed random variables with exponent $\alpha$, defined in Definition \ref{stable law}.
\end{defn}

Our model is a spherical SK model where the interactions between spins are i.i.d. heavy-tailed random variables. The partition function and free energy are defined as follows:
\begin{defn}[Partition function] \label{partition function}
  For an $N \times N$ heavy-tailed Wigner matrix $M$ with exponent $\alpha$, let $b_N$ be a normalization factor defined by
\beq \label{eq:b_N}
	b_N = \inf \{t: \P(|M_{11}|>t)\leq \frac{2}{N(N+1)}\}.
\eeq
We remark that $b_N$ is approximately of order $N^{2/\alpha}$, neglecting the correction due to a slowly varying function.

We define the partition function associated with $M$ at inverse temperature $\beta$ by
\beq
	Z_N \equiv Z_N(\beta) = \int_{S_N} \exp(\beta b_N^{-1} \langle x, Mx \rangle) \, \dd \Omega_N(x),
\eeq
where $S_N := \{ x \in \R^N : x_1^2 + \dots + x_N^2 = N \}$ and we denote by $\dd \Omega_N$ the Haar measure (normalized uniform measure) on $S_N$.
\end{defn}

\subsection{Main results}

Our main result is the following theorem on the limiting behavior of the free energy.
\begin{thm}\label{main}
  Suppose that $M$ is an $N \times N$ heavy-tailed Wigner matrix with exponent $\alpha$ as in Definition \ref{stable matrix}. Then, there exist events $F_1$ and $F_2$ such that 
	\begin{enumerate}[(i)]
	\item Events $F_1$ and $F_2$ are mutually exclusive with
	\[
		\lim_{N \to\infty} \P(F_1)= \exp(-(2\beta)^{\alpha}), \quad \lim_{N \to\infty} \P(F_2)= 1-\exp(-(2\beta)^{\alpha})
	\]
	
	\item Given $F_1$, for the partition function $Z_N$ in Definition \ref{partition function}, as $N \to \infty$
	\[
		(\log Z_N)|F_1 \Rightarrow T
	\]
	for some ($N$-independent) random variable $T$.
	
	\item Given $F_2$, for the partition function $Z_N$ in Definition \ref{partition function}, as $N \to \infty$
	\[
		 (\frac{1}{N}\log Z_N)|F_2 \Rightarrow \beta X-\frac{1}{2}\log(2e\beta X)
	\]
	for some ($N$-independent) random variable $X$.
	\end{enumerate}	
\end{thm}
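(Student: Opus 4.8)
The plan is to follow the steepest-descent strategy of \cite{2016fluctuations}, beginning from the Laplace/integral representation of $Z_N$ in terms of the eigenvalues $\lambda_1 \geq \lambda_2 \geq \dots \geq \lambda_N$ of $b_N^{-1} M$. After the change of variables, $Z_N$ is (up to explicit constants) a contour integral of the form $\int_\Gamma \exp\big(N G_N(z)\big)\,\dd z$, where $G_N(z) = \beta z - \frac{1}{2N}\sum_i \log(z - \lambda_i)$ and the contour must pass to the right of $\lambda_1$. The saddle point $z_N$ solves $G_N'(z) = 0$, i.e. $\beta = \frac{1}{2N}\sum_i (z-\lambda_i)^{-1}$, which is the usual self-consistent equation. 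The phase transition is dictated by whether this saddle is pinned at the edge $\lambda_1$ or sits strictly to its right: define $F_1$ (high temperature) to be the event that the free saddle lies to the right of $\lambda_1$ — equivalently, roughly, that $\frac{1}{2N}\sum_i (\lambda_1 - \lambda_i)^{-1} > \beta$ fails in the appropriate limiting sense — and $F_2$ its complement. For heavy-tailed matrices the relevant quantity is governed by the largest eigenvalue $\lambda_1$, which by \cite{soshnikov2004poisson,LargeLevy2009} (and the preliminaries in Section \ref{sec:prelim}) converges in distribution to a Fréchet-type random variable; the threshold condition translates, in the limit, into a comparison between $\lambda_1$ and a deterministic function of $\beta$, and the probability computation $\P(F_1) \to \exp(-(2\beta)^\alpha)$ is exactly the Poisson/Fréchet tail of the rescaled maximum. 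This establishes (i).

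For part (ii), on $F_1$ the saddle $z_N$ converges to a point strictly to the right of the limiting support, and the Gaussian steepest-descent expansion gives $\log Z_N = N G_N(z_N) + \frac{1}{2}\log(\text{curvature}) + o(1)$. The point is that on this event the ``bulk'' contribution $N\big(\beta z_N - \frac{1}{2N}\sum_i \log(z_N - \lambda_i)\big)$ must itself be $O(1)$, not $O(N)$; this is the analogue of the sub-critical regime in the light-tailed model, and it forces us to expand $z_N$ around its limiting value very precisely. Concretely, one writes $z_N = c + \delta_N$ for an explicit constant $c$ depending on $\beta$, and the $O(1)$ piece of $\log Z_N$ is a linear spectral statistic $\sum_i \big[\log(z_N - \lambda_i) - (\text{centering})\big]$ of the eigenvalues with respect to a logarithmic test function. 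The key input — flagged in the introduction as the main technical novelty — is that for heavy-tailed matrices the fluctuations of $\Tr(b_N^{-1}M)$ and $\|b_N^{-1}M\|_{HS}^2$ and of $\sum_i \log(\cdot - \lambda_i)$ are all dominated by the few largest entries of $M$, so this LSS converges in distribution to an explicit functional of a Poisson point process (built from the same extreme entries/eigenvalues). Calling the resulting limit $T$ gives $(\log Z_N)|F_1 \Rightarrow T$.

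For part (iii), on $F_2$ the saddle is pinned at the edge: $z_N = \lambda_1 + \eta_N$ with $\eta_N \to 0$, and the dominant contribution to the integral comes from a neighborhood of $\lambda_1$. Here $\log Z_N$ is of order $N$, and the leading term is $N\big(\beta \lambda_1 - \frac{1}{2N}\sum_i \log(\lambda_1 - \lambda_i)\big)$ plus lower-order corrections. Since $\lambda_1$ is of order $1$ after rescaling by $b_N$ and converges to a (Fréchet-type) random variable $X$, while $\lambda_2, \lambda_3, \dots$ are separated from $\lambda_1$ (Lemma \ref{lem:gap_eig}, which is exactly why the slow-variation monotonicity hypothesis in Definition \ref{stable law} is imposed) and their contribution to $\frac{1}{N}\sum_i \log(\lambda_1 - \lambda_i)$ concentrates on a deterministic value $\frac{1}{2}\log(2e\beta)$-type constant after the correct normalization, one gets $\frac{1}{N}\log Z_N = \beta\lambda_1 - \frac{1}{2}\log(2e\beta\lambda_1) + o(1)$, and passing to the limit yields $(\frac{1}{N}\log Z_N)|F_2 \Rightarrow \beta X - \frac{1}{2}\log(2e\beta X)$. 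I expect the main obstacle to be the high-temperature LSS analysis in part (ii): controlling the logarithmic linear spectral statistic of a heavy-tailed matrix with the precision needed to extract an $O(1)$ limit — in particular showing that the bulk eigenvalues contribute only a deterministic constant while the fluctuation is carried entirely by the Poissonian extremes — and making sure the steepest-descent error terms are genuinely $o(1)$ and not $o(N)$, since in this regime the whole quantity is only $O(1)$. Uniformity of all estimates over the (random) location of the saddle, and the careful splitting of the contour near versus away from $\lambda_1$, will require care on both events.
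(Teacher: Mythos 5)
Your proposal follows essentially the same route as the paper's proof: the eigenvalue contour representation and steepest descent, the events $F_1,F_2$ determined by whether $\lambda_1$ lies below or above the threshold $b_N/(2\beta)$ with part (i) coming from the Fr\'echet limit of $\lambda_1/b_N$ (Proposition \ref{Largest eigenvalue}), an $O(1)$ logarithmic linear spectral statistic dominated by the extreme entries giving $T$ in the high-temperature regime (as in Proposition \ref{high_temp} and Lemma \ref{lem:T}), and a saddle sticking to $\lambda_1$ giving $\beta X-\frac{1}{2}\log(2e\beta X)$ in the low-temperature regime (as in Proposition \ref{low_temp}). The plan is sound and matches the paper; only minor imprecision remains in part (iii), where the bulk contribution to $\frac{1}{N}\sum_i\log(\gamma/b_N-\lambda_i/b_N)$ is the random quantity $\log(\lambda_1/b_N)$ rather than a deterministic constant, the constant $\frac{1}{2}\log(2e\beta)$ arising instead from $\log C_N$ via Stirling's formula.
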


The precise definitions of $F_1$ and $F_2$ in Theorem \ref{main} are
\beq
	F_1 = \{ \lambda_1 < \frac{b_N}{2\beta} \}, \quad F_2 = \{ \lambda_1 > \frac{b_N}{2\beta} \}.
\eeq
As we discussed in Introduction, the events $F_1$ and $F_2$ correspond to the high temperature regime and the low temperature regime, respectively. We remark that $X$ in the low temperature regime is a random variable such that $X>1/(2\beta)$ and its distribution is given by
\[
	\P(X>u)=\frac{1-\exp(-u^{-\alpha})}{1-\exp(-(2\beta)^{\alpha})}.
\]
We also remark that the random variable $T$ in the high temperature regime has undefined mean.

\begin{proof}[Proof of Theorem \ref{main}]
See Proposition \ref{high_temp} in Section \ref{sec:high} and Proposition \ref{low_temp} in Section \ref{sec:low}. (See also Proposition \ref{Largest eigenvalue} for about the limits of $\P(F_1)$ and $\P(F_2)$.)
\end{proof}

\subsection{Strategy of the proof}

For the proof of Theorem \ref{main}, we adapt the integral representation formula for the partition function, introduced in \cite{KosterlitzThoulessJones} and proved in \cite{2016fluctuations}. Let $\lambda_1 \geq \lambda_2 \geq \dots \geq\lambda_N$ be the eigenvalues of $M$. In Lemma \ref{lem:integral representation}, we prove that
\beq \label{eq:integral}
 Z_N = C_N \frac{1}{i} \int_{\gamma-i\infty}^{\gamma+i\infty} e^{\frac{N}{2b_N}G(z)} \dd z
\eeq
for any $\gamma > \lambda_1$, where
\beq \label{eq:G}
  G(z)=2\beta z-\frac{b_N}{N}\sum_i \log(z-\lambda_i), \quad C_N=\frac{\Gamma(N/2)b_N^{N/2-1}}{2\pi (N\beta)^{N/2-1}}.
\eeq
To analyze the asymptotic behavior of the right side of \eqref{eq:integral}, in the viewpoint of the method of steepest descent, the best candidate for $\gamma$ would be the critical point of the function $G$, i.e., the solution of the equation
\beq \label{eq:gamma}
	G'(z) =2\beta-\frac{b_N}{N}\sum_i \frac{1}{z-\lambda_i} = 0.
\eeq
Note that $G'(z)$ is an increasing function of $z$ and \eqref{eq:gamma} has a unique solution in $(\lambda_1, \infty)$. 

If $\lambda_1<b_N/2\beta$, which we call the high temperature regime, since $\lambda_i \ll \lambda_1$ holds for all but $o(N)$ eigenvalues $\lambda_i$, it is possible to approximate $G'(\gamma)$ by $2\beta-b_N/\gamma$ with negligible error. We can then approximate the solution of equation \eqref{eq:gamma} using a deterministic number $b_N / (2\beta)$. Thus, we can analyze the asymptotic behavior of $Z_N$ by choosing $\gamma = b_N / (2\beta)$. In Section \ref{sec:high}, we consider this case and prove the result on the fluctuation of $\log Z_N$, Proposition \ref{high_temp}, by considering the linear spectral statistics of $M$ involving the function $G(z)$ in \eqref{eq:G}.

If $\lambda_1 > b_N / (2\beta)$, on the other hand, we cannot choose $\gamma$ to be $b_N / (2\beta)$. In this case, which we call the low temperature regime, we prove that the solution of the equation \eqref{eq:gamma} sticks to the largest eigenvalue $\lambda_1$, and the behavior of $Z_N$ is directly related to that of $\lambda_1$ with negligible error. In Section \ref{sec:low}, we prove the limiting behavior of $Z_N$ as in Proposition \ref{high_temp}, by considering the behavior of $\lambda_1$.

For both the high temperature regime and the low temperature regime, it is crucial in the analysis to understand the behavior of the eigenvalues of heavy-tailed Wigner matrices. We thus introduce several results on the eigenvalues of $M$ in the next section, before we analyze the integral in the right side of \eqref{eq:integral}.

\section{Preliminaries} \label{sec:prelim}

\subsection{Properties of heavy-tailed Wigner matrices}

In this section, we collect several important results on heavy-tailed random variables and heavy-tailed Wigner matrices, which will be used in the analysis of our model. (For more results on the eigenvalues of heavy-tailed Wigner matrices, we refer to \cite{soshnikov2004poisson,basrak2021extreme}.) For the sake of convenience, throughout this section we assume that $M$ is an heavy-tailed Wigner matrix with exponent $\alpha$ as in Definition \ref{stable matrix} and each entry $M_{ij}$ satisfies $\P(|M_{ij}|>u)=G(u)=L(u) u^{-\alpha}$, where $L$ is a slowly varying function as in Definition \ref{stable law}. We let $\lambda_1 \geq \dots \geq \lambda_N$ be the eigenvalues of $M$. We first define the high probability events.

\begin{defn}[High probability event]
  We say that an event $F_N$ holds with high probability if there exist (small) $\delta>0$ and (large) $N_0$ such that $\P(F_N)\geq 1-N^{-\delta}$ holds for any $N > N_0$.
\end{defn}

The distribution of the largest eigenvalue $\lambda_1$ of $M$ is given as follows:
\begin{prop} \label{Largest eigenvalue}
	The limiting distribution of the rescaled largest eigenvalue $\lambda_1$ is given by
	\[
		\lim_{N\to \infty}\P\left(\frac{\lambda_1}{b_N}\leq x\right)=\exp(-x^{-\alpha}),
  \]
  where the normalization factor $b_N$ is as given in \eqref{eq:b_N}.
\end{prop}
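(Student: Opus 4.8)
The plan is to deduce the limiting distribution of $\lambda_1$ from the well-known Poisson convergence of the extreme entries of $M$, following the strategy of Soshnikov \cite{soshnikov2004poisson} and Auffinger--Ben Arous--P\'ech\'e \cite{LargeLevy2009}. The starting point is that, after rescaling by $b_N$, the point process of matrix entries $\{|M_{ij}|/b_N : 1 \le i \le j \le N\}$ converges to a Poisson point process on $(0,\infty)$ with intensity $\alpha x^{-\alpha-1}\,\dd x$; this is a direct consequence of the regular variation of the tail $G(u) = L(u)u^{-\alpha}$ together with the choice of $b_N$ in \eqref{eq:b_N}, which is tailored so that $\binom{N+1}{2}\,\P(|M_{11}| > b_N x) \to x^{-\alpha}$. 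In particular, the largest entry in absolute value, call it $|M_{(1)}|$, satisfies $\P(|M_{(1)}|/b_N \le x) \to \exp(-x^{-\alpha})$.

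The key structural fact is that for $\alpha < 2$ the largest eigenvalue is, to leading order, determined by the single largest entry: if the maximal entry is attained at position $(i,j)$ with $i \ne j$, then the $2\times 2$ submatrix on rows/columns $\{i,j\}$ has an eigenvalue close to $|M_{ij}|$, and one shows via an interlacing or perturbation argument that $\lambda_1 = |M_{(1)}|(1 + o(1))$ (and more precisely $\lambda_1 - |M_{(1)}| \to 0$ after dividing by $b_N$, since the off-diagonal bulk contributes only $O(\sqrt{N}) = o(b_N)$). Concretely, I would (i) recall/cite the Poisson convergence of the rescaled entry process; (ii) show the contribution of all entries of size $o(b_N)$ to the operator norm is $o(b_N)$ with high probability — here one truncates at level $\varepsilon b_N$ and uses that a Wigner matrix with entries bounded by $\varepsilon b_N$ and variance controlled by $\int_0^{\varepsilon b_N} \P(|M_{11}|>u)\,\dd u$ has spectral norm $o(b_N)$ for the relevant range of $\alpha$; (iii) combine these to get $\lambda_1/b_N \Rightarrow$ the law of the maximal point of the Poisson process, namely the Fr\'echet law $\exp(-x^{-\alpha})$.

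Since Proposition \ref{Largest eigenvalue} is essentially quoted from \cite{soshnikov2004poisson,LargeLevy2009}, the cleanest route in the paper is probably just to cite those references and sketch why the slowly varying function $L$ and the precise normalization \eqref{eq:b_N} give the stated Fr\'echet limit with no extra constant: one checks $b_N$ is (up to $1+o(1)$) the $(1 - 2/(N(N+1)))$-quantile of $|M_{11}|$, so that $\binom{N+1}{2}\P(|M_{11}| > b_N x) = \binom{N+1}{2} L(b_N x)(b_N x)^{-\alpha} \to x^{-\alpha}$ using $L(b_N x)/L(b_N) \to 1$; then the number of entries exceeding $b_N x$ is asymptotically Poisson$(x^{-\alpha})$, so $\P(\text{no entry} > b_N x) \to e^{-x^{-\alpha}}$, and the eigenvalue reduction above transfers this to $\lambda_1$.

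The main obstacle is step (ii): controlling the spectral norm of the "bulk" part of the matrix after removing the few largest entries, uniformly enough that it is negligible on the scale $b_N \asymp N^{2/\alpha}$. This requires a careful truncation argument — one must split into entries of size in $[\varepsilon b_N, \infty)$ (finitely many, handled by the Poisson picture and a rank argument) and entries of size $< \varepsilon b_N$ (whose matrix has a norm bound of order $\sqrt{N \cdot \mathbb{E}[M_{11}^2 \mathbbm{1}_{|M_{11}| < \varepsilon b_N}]}$, which is $o(b_N)$ when $\alpha < 2$ because the truncated second moment grows like $(\varepsilon b_N)^{2-\alpha}$, so the norm is $O(\sqrt{N}\,(\varepsilon b_N)^{1-\alpha/2}) = o(b_N)$) — and then let $\varepsilon \to 0$. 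The rest is bookkeeping with regular variation.
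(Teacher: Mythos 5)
Your proposal is correct and coincides with what the paper actually does: the paper gives no independent proof but simply cites Theorem 1 of \cite{soshnikov2004poisson}, which is exactly the route you recommend, and your sketch (Poisson convergence of the rescaled entry process, reduction of $\lambda_1$ to the largest entry via truncation of the bulk at level $\varepsilon b_N$) is precisely Soshnikov's argument. The only nitpick is your passing claim that the bulk contributes $O(\sqrt{N})$ — the variance is infinite so this is only true after truncation, as your own step (ii) correctly spells out with the bound $O(\sqrt{N}\,(\varepsilon b_N)^{1-\alpha/2}) = o(b_N)$.
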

For the proof of Proposition \ref{Largest eigenvalue}, see Theorem 1 in \cite{soshnikov2004poisson}. Next, we state the results which ensure that the good events for $M$ hold with high probability.
\begin{lem}\label{lem:whp_set1}
  With high probability, the following hold:
    \begin{itemize}
        \item For any $1 \leq i \leq N$, $|M_{ii}| \leq b_N^{11/20}$  
        \item For any $1 \leq i \leq j \leq N$, we have $|M_{ij}| \leq b_N^{99/100}$ or $|M_{ii}|+|M_{jj}|\leq b_N^{1/10}$
        \item For any constant $\delta>0$, there is no row of $M$ that has at least two entries whose absolute values are larger than $b_N^{1/2+\delta}$.
    \end{itemize}
\end{lem}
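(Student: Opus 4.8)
The three items are all ``no exceptional entry'' statements, so the plan is a union bound over entries (or over rows), fed by a single estimate on the tail of an individual entry. The key input is that for any fixed constant $c>0$,
\beq
  \P(|M_{11}|>b_N^{\,c}) = L(b_N^{\,c})\,b_N^{-c\alpha} = N^{-2c+o(1)},
\eeq
which follows by combining the defining relation $L(b_N)b_N^{-\alpha}=\Theta(N^{-2})$ (hence $b_N=N^{2/\alpha+o(1)}$) with the slow variation of $L$, which lets one absorb $L(b_N^{c})$, $L(b_N)$, and any fixed multiplicative constants into the factor $N^{o(1)}$. I would record this first, checking that the $o(1)$ is uniform over the finitely many exponents $c\in\{\tfrac{11}{20},\tfrac{99}{100},\tfrac1{10},\tfrac12+\delta\}$ that occur.

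For the first item, a union bound over the $N$ diagonal entries gives failure probability at most $N\cdot\P(|M_{11}|>b_N^{11/20})=N^{1-11/10+o(1)}=N^{-1/10+o(1)}$. For the second item it suffices to treat $i<j$ (the case $i=j$ is subsumed by the first item); then $M_{ij}$ is independent of the pair $(M_{ii},M_{jj})$, so the probability that $|M_{ij}|>b_N^{99/100}$ \emph{and} $|M_{ii}|+|M_{jj}|>b_N^{1/10}$ is at most
\beq
  \P(|M_{ij}|>b_N^{99/100})\cdot 2\,\P\!\big(|M_{11}|>\tfrac12 b_N^{1/10}\big)= N^{-99/50+o(1)}\cdot N^{-1/5+o(1)},
\eeq
and a union bound over the $\binom N2$ pairs still leaves a negative power of $N$ (namely $N^{-9/50+o(1)}$). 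Both of these are routine once the tail estimate is available; the only care needed is that slow variation absorbs the constant $\tfrac12$ and the factor $2$.

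The third item is the delicate one, and I expect it to be the main obstacle. Within a fixed row the $N$ entries $M_{i1},\dots,M_{iN}$ index pairwise distinct unordered index pairs, hence are i.i.d.; so the probability that a given row carries two entries exceeding $b_N^{1/2+\delta}$ is at most $\binom N2\,\P(|M_{11}|>b_N^{1/2+\delta})^2=N^{-4\delta+o(1)}$, and a union bound over the $N$ rows yields $N^{1-4\delta+o(1)}$. Here the margin is the tightest of the three: unlike the first two items there is essentially no slack, so one must verify that the combinatorial factor $\binom N2$ (from choosing the pair of large entries in a given row), squared against the entrywise tail, still dominates the number of rows in the range of $\delta$ that the later arguments require. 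Apart from this bookkeeping, no probabilistic ingredient beyond the entrywise tail bound and the independence of the upper-triangular entries is needed.
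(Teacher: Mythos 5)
Your tail estimate $\P(|M_{11}|>b_N^{c})=N^{-2c+o(1)}$ and the union bounds built on it for the first two items are correct, and since the paper offers no proof of this lemma at all (it simply cites \cite{soshnikov2004poisson}), a direct argument of this kind is the natural thing to attempt. The genuine problem is the third item, and it is not the ``bookkeeping'' you describe: your bound $N\cdot\binom{N}{2}\,\P(|M_{11}|>b_N^{1/2+\delta})^2=N^{1-4\delta+o(1)}$ is a negative power of $N$ only when $\delta>1/4$, so the union bound proves the claim only in that range, whereas the statement asserts it for every constant $\delta>0$. Worse, for $\delta<1/4$ no argument can close the gap, because the claim is then false: the number of entries exceeding $b_N^{1/2+\delta}$ concentrates around $\tfrac{N(N+1)}{2}\P(|M_{11}|>b_N^{1/2+\delta})=N^{1-2\delta+o(1)}\gg N^{1/2}$, and a birthday-paradox/second-moment computation (the expected number of pairs of such entries sharing a row index is of order $N^{3}\P(|M_{11}|>b_N^{1/2+\delta})^{2}=N^{1-4\delta+o(1)}\to\infty$, while the overlapping-configuration corrections are of smaller order) shows that with probability tending to one some row \emph{does} contain two entries above $b_N^{1/2+\delta}$.

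So the defect lies partly in the literal statement you were asked to prove. What is true, and what Soshnikov-type arguments actually use, is either the restricted range $\delta>1/4$ (which your union bound already yields) or a reformulated version in which the row is additionally required to carry a near-maximal entry: for instance, for $0<\epsilon<\delta$, with high probability no row containing an entry of absolute value at least $b_N^{1-\epsilon}$ contains a second entry of absolute value at least $b_N^{1/2+\delta}$; there the first moment is bounded by $N^{2}\,\P(|M_{11}|>b_N^{1-\epsilon})\cdot N\,\P(|M_{11}|>b_N^{1/2+\delta})=N^{2\epsilon-2\delta+o(1)}\to 0$, and the union bound closes. If your write-up is meant to support the later uses of this lemma (isolation of the rows carrying the largest entries), you should prove one of these corrected forms rather than the third bullet as stated; as written, your proposal establishes the first two bullets, the third only for $\delta>1/4$, and cannot do better.
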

See \cite{soshnikov2004poisson} for the proof. 

\begin{lem}\label{inf_norm}
  For every $\epsilon> 0$, with high probability the following holds for any $i=1, 2, \dots, N$: Suppose that $|M_{ii'}| = \max_{j} |M_{ij}|$. Then, $|M_{ii'}| < b_N^{1/2+\epsilon}$ or $|\sum_{j: j \neq i'} M_{ij}| < b_N^{1/2+\epsilon}$.
\end{lem}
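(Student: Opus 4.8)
The statement concerns a single row $i$ of $M$: if $M_{ii'}$ is the maximal-modulus entry in row $i$, then either $|M_{ii'}|$ is already below the threshold $b_N^{1/2+\epsilon}$, or the sum of the \emph{remaining} off-diagonal entries of that row is below $b_N^{1/2+\epsilon}$. The natural route is a second-moment / truncation argument on the row sum $S_i := \sum_{j \neq i'} M_{ij}$, conditional on which column $i'$ achieves the maximum, combined with the structural facts already recorded in Lemma \ref{lem:whp_set1}. I would first reduce to the case $|M_{ii'}| \geq b_N^{1/2+\epsilon}$, since otherwise there is nothing to prove. On this event, the third bullet of Lemma \ref{lem:whp_set1} tells us that (with high probability, taking the $\delta$ there to be $\epsilon$) row $i$ has \emph{no other} entry exceeding $b_N^{1/2+\epsilon}$; hence every summand in $S_i$ is, in absolute value, at most $b_N^{1/2+\epsilon}$. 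So $S_i$ is a sum of at most $N$ i.i.d.\ heavy-tailed variables each truncated at level $b_N^{1/2+\epsilon}$.

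The core estimate is then a tail bound for such a truncated sum. For a heavy-tailed variable $X$ with $\P(|X|>u) = L(u)u^{-\alpha}$ and $\alpha \in (0,2)$, the truncation $X \mathbbm{1}_{|X| \le b_N^{1/2+\epsilon}}$ has second moment of order $\int_0^{b_N^{1/2+\epsilon}} u \, L(u) u^{-\alpha} \, \dd u \sim C \, b_N^{(1/2+\epsilon)(2-\alpha)} L(b_N^{1/2+\epsilon})$ up to the slowly varying correction (when $\alpha<2$ the integral is dominated by its upper limit). Multiplying by $N$ summands and using $b_N \approx N^{2/\alpha}$ (so $N \approx b_N^{\alpha/2}$), the variance of $S_i$ is of order $b_N^{\alpha/2} \cdot b_N^{(1/2+\epsilon)(2-\alpha)} = b_N^{1 + 2\epsilon - \alpha\epsilon} = b_N^{1+(2-\alpha)\epsilon}$, again up to slowly varying factors. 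I would similarly bound the mean of a single truncated term (if the distribution is not symmetric) by $\int^{b_N^{1/2+\epsilon}} L(u) u^{-\alpha}\, \dd u$, which for $\alpha<1$ grows like $b_N^{(1/2+\epsilon)(1-\alpha)}$ and for $\alpha\in[1,2)$ is at most a slowly varying function; in all cases $N$ times the mean stays well below $b_N^{1/2+\epsilon}$ when $\epsilon$ is chosen appropriately. Then Chebyshev's inequality on $S_i$, with the threshold $b_N^{1/2+\epsilon}$, gives $\P(|S_i| > b_N^{1/2+\epsilon}) \lesssim b_N^{1+(2-\alpha)\epsilon} / b_N^{1+2\epsilon} = b_N^{-\alpha\epsilon}$, up to slowly varying factors — a genuine polynomial decay in $N$.

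To get the \emph{uniform-in-$i$} conclusion with high probability, I would run the above for a fixed $i$, take a union bound over the $N$ rows, and absorb the factor $N \approx b_N^{\alpha/2}$ into the decay: $N \cdot b_N^{-\alpha\epsilon}$ still decays polynomially provided $\epsilon$ exceeds a suitable constant — and since the statement is ``for every $\epsilon>0$,'' I can freely shrink $\epsilon$ at the end (a weaker threshold for smaller $\epsilon$ follows from a stronger one, so it suffices to prove the lemma for small $\epsilon$, where one must simply be a little more careful: re-index by choosing an auxiliary exponent $\epsilon' \in (0,\epsilon)$ inside the truncation and keeping $\epsilon$ itself as the comparison threshold, so that the variance bound $b_N^{1+(2-\alpha)\epsilon'}$ beats $b_N^{1+2\epsilon}$ with room to spare for the union bound). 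One subtlety to handle carefully: conditioning on the identity of $i'$ (the argmax) is not independent of the values $M_{ij}$, $j \neq i'$. I would deal with this by not conditioning at all — instead, define $S_i$ directly as ``the sum of all off-diagonal entries in row $i$ except the largest one,'' note that on the high-probability event of Lemma \ref{lem:whp_set1} (third bullet) this is a sum of variables each bounded by $b_N^{1/2+\epsilon'}$, and bound $\p(|S_i| > b_N^{1/2+\epsilon})$ by first removing the single largest term from the full row sum $\sum_{j\neq i} M_{ij}$ and controlling both the full sum and the max separately, or alternatively by a deletion argument: $\p(|S_i|>t) \le \sum_{k} \p(|\sum_{j\neq i,k} M_{ij}| > t,\ k = \arg\max)$ and for each fixed $k$ the sum $\sum_{j \neq i,k} M_{ij}$ is independent of $M_{ik}$, so the joint event factorizes enough to apply Chebyshev to the sum alone.

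The main obstacle I anticipate is precisely this decoupling of ``which entry is the maximum'' from ``the value of the rest of the row,'' together with keeping the slowly varying function $L$ under control throughout the moment computations (Karamata's theorem / Potter bounds will be needed to justify that $\int_0^{x} u^{1-\alpha} L(u)\,\dd u \sim \frac{1}{2-\alpha} x^{2-\alpha} L(x)$ and that such factors do not destroy the polynomial decay). Neither is deep, but both require care; the moment estimate itself and the Chebyshev step are routine once the truncation level and the bookkeeping of $\epsilon$ versus $\epsilon'$ are fixed.
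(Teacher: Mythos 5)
Your reduction to the event $|M_{ii'}|\ge b_N^{1/2+\epsilon}$ and the use of the third bullet of Lemma \ref{lem:whp_set1} to replace the remaining entries by their truncations is sound, and parallels what the paper does. The genuine gap is quantitative: Chebyshev's inequality is too weak to survive the required uniformity in $i$. Your per-row bound is $\P(|S_i|>b_N^{1/2+\epsilon})\lesssim b_N^{1+(2-\alpha)\epsilon'-(1+2\epsilon)}$, which even in the most favorable case $\epsilon'\to 0$ is only $b_N^{-2\epsilon}\approx N^{-4\epsilon/\alpha}$. A union bound over the $N$ rows (let alone over the $N^2$ pairs $(i,k)$ in your deletion argument, which is the version that actually decouples the argmax) then gives $N^{1-4\epsilon/\alpha}$ (resp.\ $N^{2-4\epsilon/\alpha}$), which is vacuous unless $\epsilon>\alpha/4$ (resp.\ $\epsilon>\alpha/2$). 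Your proposed remedy goes in the wrong direction twice: tuning $\epsilon'<\epsilon$ inside the truncation only improves the variance, which is not the bottleneck, and the reduction ``it suffices to prove the lemma for small $\epsilon$'' is exactly the regime where the Chebyshev-plus-union argument fails (the small-$\epsilon$ statement is the stronger one and cannot be deduced from the large-$\epsilon$ one). What is needed is a per-row failure probability decaying faster than any power of $N$. This is how the paper proceeds: for each row it controls the \emph{number} of entries exceeding each level $b_N^{k\epsilon}$, $k=0,1,\dots,M$, by Chernoff's inequality for binomial counts (failure probability $e^{-N^{\gamma}}$ per row and level, so the union over rows is free), and then bounds the sub-threshold part of the row by summing, over layers, the count times the layer's upper value. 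Alternatively, a Fuk--Nagaev/Bernstein-type inequality for the centered truncated sum (exploiting that the summands are bounded by $b_N^{1/2+\epsilon'}$ while the deviation threshold is $b_N^{1/2+\epsilon}$ with $\epsilon'<\epsilon$) would also give a stretched-exponentially small per-row bound; plain second moments do not.

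A secondary problem is your treatment of the mean when $\alpha\in[1,2)$: the bound $|\E[X\mathbbm{1}_{|X|\le T}]|\le\int^{T}L(u)u^{-\alpha}\,\dd u=O(1)$ only gives $N\cdot O(1)\approx b_N^{\alpha/2}$, which is far above $b_N^{1/2+\epsilon}$, so ``$N$ times the mean stays well below $b_N^{1/2+\epsilon}$'' is not justified in that range. You need symmetry or centering of the entries (as in the references the paper leans on) together with the sharper estimate $|\E[X\mathbbm{1}_{|X|\le T}]|=|\E[X\mathbbm{1}_{|X|>T}]|=O(T^{1-\alpha}L(T))$; the paper's layered-counting argument sidesteps first moments altogether by bounding the sum of absolute values of the sub-threshold entries.
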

For the proof, see Appendix \ref{app:proof}. As a simple corollary to Lemma \ref{inf_norm}, we can also obtain the following estimates on the extreme eigenvalues of $M$.

\begin{lem}\label{ls_eig}
	With high probability, we have
  \[
		\lambda_1= (1+O(b_N^{-1/8}))\max\{|M_{ij}|\}, \quad |\lambda_N|= (1+O(b_N^{-1/8}))\max\{|M_{ij}|\}
  \]
\end{lem}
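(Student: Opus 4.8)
Write $m_N:=\max_{1\le i,j\le N}|M_{ij}|$. I would show that on a single high-probability event one has simultaneously
\[
\lambda_1 = m_N + O(b_N^{11/20}),\qquad |\lambda_N| = m_N + O(b_N^{11/20}),\qquad m_N\ge b_N^{7/8},
\]
which gives $\lambda_1=(1+O(b_N^{-13/40}))m_N$ and likewise for $|\lambda_N|$, stronger than claimed. Fix a small $\epsilon\in(0,1/20)$, set the threshold $\tau:=b_N^{1/2+\epsilon}$, and decompose
\[
M=\Lambda+D+B,\qquad \Lambda_{ij}:=M_{ij}\,\lone(i\ne j,\ |M_{ij}|>\tau),\quad D:=\diag(M_{11},\dots,M_{NN}),
\]
with $B$ the remaining sub-threshold off-diagonal entries. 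On a high-probability event the following hold together: $\max_i|M_{ii}|\le b_N^{11/20}$ and no row of $M$ contains two entries exceeding $\tau$ in absolute value (Lemma~\ref{lem:whp_set1}); the row-sum bound of Lemma~\ref{inf_norm}; and, by a direct tail computation from the definition \eqref{eq:b_N} of $b_N$ together with slow variation, $m_N\ge b_N^{7/8}$ (in fact $\P(m_N<b_N^{1-\eta})\le\exp(-b_N^{\eta\alpha+o(1)})$ for every fixed $\eta>0$, and $N\asymp b_N^{\alpha/2+o(1)}$). In particular $m_N$ is attained at an off-diagonal, super-threshold entry, say at $(p,q)$.

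\textbf{Easy bounds.} Testing the quadratic form against $u^{\pm}:=2^{-1/2}\bigl(e_p\pm\operatorname{sgn}(M_{pq})e_q\bigr)$ gives $\langle u^{\pm},Mu^{\pm}\rangle=\tfrac12(M_{pp}+M_{qq})\pm m_N$, so the diagonal bound yields $\lambda_1\ge m_N-b_N^{11/20}$ and $\lambda_N\le -m_N+b_N^{11/20}$. For the matching upper bound: since no row of $M$ has two super-threshold entries, the super-threshold off-diagonal entries form a partial matching, so after relabeling $\Lambda$ is block-diagonal with $2\times2$ off-diagonal blocks (and a zero block); its eigenvalues are $\{\pm|M_{ij}|:\{i,j\}\ \text{matched}\}\cup\{0\}$, and as $m_N$ is realized by one such pair, $\|\Lambda\|_{\mathrm{op}}=m_N$. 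Also $\|D\|_{\mathrm{op}}\le b_N^{11/20}$. By the triangle inequality $\max\{\lambda_1,-\lambda_N\}\le\|M\|_{\mathrm{op}}\le m_N+b_N^{11/20}+\|B\|_{\mathrm{op}}$, so the whole lemma reduces to showing $\|B\|_{\mathrm{op}}=o(b_N^{11/20})$.

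\textbf{The bulk bound (main obstacle).} This is the crux, and the entrywise bound $|B_{ij}|\le\tau$ alone is not enough: for $\alpha$ close to $2$ one already has $\|B\|_{\mathrm{HS}}\asymp b_N^{1/2+\alpha/4}$, which can exceed $b_N^{11/20}$, so a genuine spectral (not Hilbert--Schmidt) estimate is required. I would split $B=(B-\E B)+\E B$. The mean part is, up to the excised diagonal, a multiple of the all-ones matrix: $|\E B_{ij}|=\bigl|\E[M_{11}\lone(|M_{11}|\le\tau)]\bigr|\lesssim \tau^{1-\alpha}L(\tau)$ by Karamata's theorem (and vanishes under symmetry of the law of $M_{11}$), whence $\|\E B\|_{\mathrm{op}}\le N\max_{ij}|\E B_{ij}|=b_N^{1/2+o(1)}$; alternatively the row-sum bound of Lemma~\ref{inf_norm} controls precisely the action of this rank-one--type term along the all-ones direction. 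For the centered part $B-\E B$ — a symmetric matrix with independent, mean-zero, $\tau$-bounded entries of variance at most $\E[M_{11}^2\lone(|M_{11}|\le\tau)]\lesssim\tau^{2-\alpha}L(\tau)$ — a standard moment-method (Füredi--Komlós type) or matrix-concentration estimate gives $\|B-\E B\|_{\mathrm{op}}\lesssim\sqrt{N\tau^{2-\alpha}L(\tau)}+(\log N)^{C}\tau=b_N^{1/2+O(\epsilon)}$; these are exactly the truncation estimates underlying the analysis of heavy-tailed matrices and may be quoted from \cite{soshnikov2004poisson,levyspectrum2008}. Combining, $\|B\|_{\mathrm{op}}=b_N^{1/2+O(\epsilon)}\ll b_N^{11/20}$, which closes the argument.

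The only step requiring real work is this spectral bound on the centered bulk $B-\E B$ (the Hilbert--Schmidt bound being too crude when $\alpha$ is near $2$); the two test vectors, the block structure of $\Lambda$, the bound on $D$, and the lower bound on $m_N$ are all elementary given Lemmas~\ref{lem:whp_set1} and \ref{inf_norm}.
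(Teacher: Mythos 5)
Your route is genuinely different from the paper's. The paper proves the upper bound by estimating $\max\{\lambda_1,-\lambda_N\}\le\|M\|_{\infty}=\max_i\sum_j|M_{ij}|$, combining Lemma \ref{inf_norm} with the one-big-entry-per-row property from Lemma \ref{lem:whp_set1}, and it proves the lower bound with exactly your two test vectors $u^{\pm}$; you instead split $M=\Lambda+D+B$, read off $\|\Lambda\|_{\mathrm{op}}=m_N$ from the matching structure, and control the bulk $B$ in operator norm after centering by a F\"uredi--Koml\'os/Bandeira--van Handel type bound. Your diagnosis that a Hilbert--Schmidt bound on $B$ is too crude near $\alpha=2$ is correct, and the same objection in fact applies to absolute row sums: for $\alpha\in(1,2)$ a typical row already has $\sum_j|M_{ij}|=\Theta(N)=\Theta(b_N^{\alpha/2+o(1)})$, so the $\ell^{\infty}$-norm route gives relative error of order $b_N^{\alpha/2-1+o(1)}$, which reaches $b_N^{-1/8}$ only for $\alpha\le 7/4$. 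In that sense your centered-bulk estimate is the more robust (and sharper) way to the stated rate, and the rest of your argument (test vectors, block structure of $\Lambda$, diagonal bound, lower bound on $m_N$) matches the paper's ingredients.

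The one genuine soft spot is the mean part $\E B$. Since $B_{ij}=M_{ij}\lone(i\ne j,\,|M_{ij}|\le\tau)$, one has $\E B=\mu_{\tau}(J-I)$ with $\mu_{\tau}=\E[M_{11}\lone(|M_{11}|\le\tau)]$ and $J$ the all-ones matrix. The bound $|\mu_{\tau}|\lesssim\tau^{1-\alpha}L(\tau)$ that you attribute to Karamata holds for $\alpha<1$, and for $\alpha\in(1,2)$ only when $\E M_{11}=0$; Definition \ref{stable law} imposes neither centering nor symmetry. If $\E M_{11}=\mu\ne0$ and $\alpha\in(1,2)$, then $\mu_{\tau}\to\mu$ and $\|\E B\|_{\mathrm{op}}\sim N|\mu|\asymp b_N^{\alpha/2+o(1)}$, which exceeds $b_N^{11/20}$ as soon as $\alpha>11/10$ (and exceeds even the weaker target $m_Nb_N^{-1/8}$ once $\alpha>7/4$), so the reduction ``$\|B\|_{\mathrm{op}}=o(b_N^{11/20})$'' breaks. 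Lemma \ref{inf_norm} does not repair this: it controls random row sums, not the deterministic rank-one matrix $\mu_{\tau}J$. To close the gap you must either assume symmetric (or centered, for $\alpha>1$) entries, as your parenthetical suggests, or treat $\mu_{\tau}J$ as a separate rank-one perturbation and show it shifts the localized top eigenvalue of $\Lambda+D+(B-\E B)$ only by $O(1)$ (a short overlap/secular-equation argument, since the top eigenvector is supported on two coordinates while $J$ acts along the delocalized all-ones direction). The paper's own proof relies on the analogous cancellation implicitly and degrades in the same regime, so this is not a defect relative to the paper's argument, but it is the one step your write-up leaves unproved under the paper's stated assumptions.
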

See Appendix \ref{app:proof} for the proof.
We also can show that $\lambda_1$ is away from $\lambda_2$ and 0 with high probability. This will be used to show approximate the value of steepest descent method.
\begin{lem}\label{lem:gap_eig}
    For every $\epsilon>0$, the following hold with high probability.
    \begin{itemize}
        \item  $\lambda_1>b_N N^{-\epsilon}$.
        \item $\lambda_1-\lambda_2> b_N N^{-\epsilon}$.
        \item $\lambda_1 \notin ( \frac{b_N}{2\beta} (1-N^{-\epsilon}),  \frac{b_N}{2\beta} (1+N^{-\epsilon}))$.
    \end{itemize}

\end{lem}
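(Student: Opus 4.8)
The plan is to prove each of the three high-probability statements in Lemma \ref{lem:gap_eig} essentially separately, relying on the extreme-value description of the spectrum provided by Lemma \ref{ls_eig} together with standard facts about the maximum of i.i.d. heavy-tailed entries.

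\textbf{First bullet ($\lambda_1 > b_N N^{-\epsilon}$).} By Lemma \ref{ls_eig}, with high probability $\lambda_1 = (1+O(b_N^{-1/8}))\max_{i\le j}|M_{ij}|$, so it suffices to show $\max_{i\le j}|M_{ij}| > b_N N^{-\epsilon}$ with high probability. There are $N(N+1)/2$ i.i.d.\ entries, each with $\P(|M_{11}| > t) = L(t)t^{-\alpha}$, and $b_N$ is defined so that $\P(|M_{11}| > b_N) \approx 2/(N(N+1))$. Hence $\P(\max_{i\le j}|M_{ij}| \le b_N N^{-\epsilon}) = (1 - \P(|M_{11}| > b_N N^{-\epsilon}))^{N(N+1)/2}$, and using $\P(|M_{11}| > b_N N^{-\epsilon}) = L(b_N N^{-\epsilon})(b_N N^{-\epsilon})^{-\alpha} \gg N^{\alpha\epsilon} \cdot (2/(N(N+1)))$ (up to the slowly varying correction, which is absorbed since $N^{\alpha\epsilon}$ dominates any power of $\log$), this probability is at most $\exp(-c N^{\alpha\epsilon})$, which is more than enough. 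I would record the slowly-varying bookkeeping as a short computation using Potter's bounds.

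\textbf{Second bullet ($\lambda_1 - \lambda_2 > b_N N^{-\epsilon}$).} This is where the technical monotonicity assumption on $L(x)e^{x^\delta}$ from Definition \ref{stable law} (flagged in Remark \ref{rem:slow_vary}) enters. The idea: by Lemma \ref{lem:whp_set1} and Lemma \ref{ls_eig}, with high probability the two largest eigenvalues in modulus are each governed by a single large entry of $M$ (the largest and second-largest off-diagonal entries, in distinct rows/columns by the third bullet of Lemma \ref{lem:whp_set1}), so $\lambda_1 \approx M_{(1)} := \max|M_{ij}|$ and $\lambda_2 \lesssim M_{(2)} :=$ second largest $|M_{ij}|$ (up to $(1+O(b_N^{-1/8}))$ factors). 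It then suffices to show that the gap $M_{(1)} - M_{(2)}$ between the top two order statistics of $N(N+1)/2$ i.i.d.\ heavy-tailed samples is at least $b_N N^{-\epsilon}$ with high probability. For heavy-tailed variables the top order statistics, rescaled by $b_N$, converge to a Poisson point process on $(0,\infty)$ with intensity $\alpha x^{-\alpha-1}\,dx$, for which consecutive points are a.s.\ distinct; the monotonicity assumption is what upgrades this to the quantitative lower bound $N^{-\epsilon}$ with high probability rather than merely a.s.\ positivity. Concretely, I would condition on $M_{(1)} = s \in (b_N N^{-\epsilon'}, b_N N^{\epsilon'})$ and bound the conditional probability that some other entry lies in $(s - b_N N^{-\epsilon}, s)$ by $N^2 \cdot [G(s - b_N N^{-\epsilon}) - G(s)]$, then show this is $o(1)$ using that $G$ varies slowly enough on that scale — this is precisely the step where one needs control on the local oscillation of $L$, provided by the assumed monotonicity. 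I expect this to be the main obstacle: making the Poisson-gap heuristic quantitative and uniform in the location of the maximum, while carefully tracking the slowly varying function.

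\textbf{Third bullet ($\lambda_1 \notin (\tfrac{b_N}{2\beta}(1-N^{-\epsilon}), \tfrac{b_N}{2\beta}(1+N^{-\epsilon}))$).} Again reduce via Lemma \ref{ls_eig} to showing $M_{(1)} = \max|M_{ij}|$ avoids the window $(\tfrac{b_N}{2\beta}(1-2N^{-\epsilon}), \tfrac{b_N}{2\beta}(1+2N^{-\epsilon}))$ (slightly enlarged to absorb the $(1+O(b_N^{-1/8}))$ error). The probability that $M_{(1)}$ lies in an interval $(a,b)$ near $cb_N$ is at most $\P(\exists\, (i,j): |M_{ij}| \in (a,b)) \le \tfrac{N(N+1)}{2}[G(a) - G(b)]$, and since $G(cb_N) = L(cb_N)(cb_N)^{-\alpha} = \Theta(N^{-2} \cdot (\text{slowly varying}))$, while $G(a) - G(b) \approx G(cb_N)\cdot \alpha \cdot (\text{relative width}) = O(N^{-2} N^{-\epsilon} \cdot \text{s.v.})$, the union bound gives $O(N^{-\epsilon} \cdot \text{s.v.})$, which is $o(N^{-\delta})$ for $\delta < \epsilon$ after again absorbing the slowly varying factor with Potter's bounds. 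This is the easiest of the three. Throughout, I would state one preparatory sublemma computing $G$ and its increments on scales comparable to $b_N$ (using Definition \ref{stable law} and Potter bounds), and then the three bullets follow by short union-bound arguments combined with Lemma \ref{ls_eig}; intersecting the three high-probability events and the high-probability event of Lemma \ref{ls_eig} finishes the proof.
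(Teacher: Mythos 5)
Your proposal is correct and follows essentially the same route as the paper: reduce all three bullets to the largest entries of $M$ via Lemma \ref{ls_eig}, prove the first bullet by the product bound on $\P(\max_{i\le j}|M_{ij}|\le b_N N^{-\epsilon})$, and prove the gap bound and the avoidance of the window around $b_N/(2\beta)$ by a thin-window estimate for the top order statistics whose key input is precisely the monotonicity of $L(x)e^{x^{\delta}}$, which yields $[G(x)-G(y)]/G(x)=O(x^{-\epsilon/2})$ for $y=x(1+O(x^{-\epsilon}))$. The only cosmetic difference is in packaging: the paper proves a single estimate $\P\bigl(x<\max_{i\le j}|M_{ij}|<x(1+O(x^{-\epsilon}))\bigr)=O(x^{-\epsilon/2})$ and applies it at the locations $\lambda_2$ and $b_N/(2\beta)$, whereas you condition on the value of the maximum and union-bound over the remaining entries in the window below it — the same computation, equally reliant on the technical assumption from Definition \ref{stable law}.
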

See Appendix \ref{app:proof} for the proof, where the assumption for slowly varying functions in Definition \ref{stable law} plays a role.
The next result is about the sum of i.i.d. heavy-tailed random variables. 
\begin{prop}\label{stable_dist}
Suppose $X_1,X_2,\cdots$ are i.i.d heavy-tailed random variables with exponent $\alpha$ defined in Definition \ref{stable law} such that
\[
	\lim_{x\to \infty} \P(X_1>x)/ \P(|X_1|>x)=\theta\in [0,1]
\]
Let $S_N=X_1+\cdots+X_N$, $a_N=\inf\{x: \P(|X_1|>x)\leq N^{-1}\}$, and $c_N=N \E [X_1 \mathbbm{1}_{(|X_1|\leq a_N)}]$. Then, there exists a random variable $Y$ whose distribution is non-degenerate such that $(S_N-c_N)/a_N \Rightarrow Y$.
\end{prop}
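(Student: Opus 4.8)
The statement is the classical generalized central limit theorem for sums of i.i.d. heavy-tailed random variables in the domain of attraction of an $\alpha$-stable law with $\alpha \in (0,2)$; the plan is to invoke and streamline the standard proof via characteristic functions and the classical theory of regular variation. First I would reduce to showing convergence of the characteristic function of $(S_N - c_N)/a_N$ to a limit $\phi(t)$ that is the characteristic function of some non-degenerate law. Writing $\phi_N(t) = \E[\e{\ii t X_1}]$, we have $\E[\exp(\ii t (S_N - c_N)/a_N)] = \e{-\ii t c_N/a_N}\,\phi_N(t/a_N)^N = \exp\big(N \log \phi_N(t/a_N) - \ii t c_N/a_N\big)$, so it suffices to show $N(\phi_N(t/a_N) - 1) - \ii t c_N/a_N$ converges to a finite limit $\psi(t)$; then $\phi(t) = \e{\psi(t)}$.

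The heart of the matter is an asymptotic expansion of $N(\phi_N(s) - 1)$ as $s \to 0$ with $s = t/a_N$. I would write $\phi_N(s) - 1 = \E[\e{\ii s X_1} - 1] = \E[\e{\ii s X_1} - 1 - \ii s X_1 \lone_{(|X_1| \le a_N)}] + \ii s \E[X_1 \lone_{(|X_1| \le a_N)}]$. The second term, multiplied by $N$, is exactly $\ii s c_N = \ii t c_N / a_N$, which cancels the centering term. For the first term, split the expectation over $\{|X_1| \le a_N\}$ and $\{|X_1| > a_N\}$. On the small-values set, Taylor expansion gives $|\e{\ii s x} - 1 - \ii s x| \le \tfrac12 s^2 x^2$, and $N s^2 \E[X_1^2 \lone_{(|X_1| \le a_N)}] = (t^2/a_N^2)\, N\, \E[X_1^2 \lone_{(|X_1| \le a_N)}]$; by Karamata's theorem applied to the regularly varying tail $\P(|X_1| > u) = L(u) u^{-\alpha}$ together with the definition $a_N = \inf\{x : \P(|X_1| > x) \le N^{-1}\}$ (so $N\,\P(|X_1| > a_N) \to 1$ and $N L(a_N) a_N^{-\alpha} \to 1$), one gets $N a_N^{-2}\E[X_1^2 \lone_{(|X_1| \le a_N)}] \to \alpha/(2-\alpha)$, a finite constant, so this piece contributes a genuine (finite) quadratic-in-$t$ term. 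On the large-values set, $N \E[(\e{\ii s X_1} - 1)\lone_{(|X_1| > a_N)}]$: here I substitute $u = s x = t x/a_N$, use the regular variation of the tail and the assumed one-sided tail balance $\P(X_1 > x)/\P(|X_1| > x) \to \theta$ to write this, after a change of variables and dominated/monotone convergence, as an integral $\int (\e{\ii t y} - 1)\,\dd\nu_\alpha(y)$ against the $\alpha$-stable Lévy measure $\dd\nu_\alpha(y) = \alpha(\theta\, y^{-\alpha-1}\lone_{y>0} + (1-\theta)(-y)^{-\alpha-1}\lone_{y<0})\,\dd y$, with the $-\ii t y$ compensation for $|y| \le 1$ already accounted for by the bookkeeping above. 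Collecting, $\psi(t) = -c|t|^\alpha(1 - \ii \beta'\, \mathrm{sgn}(t)\tan(\pi\alpha/2))$ in the usual parametrization (with a logarithmic correction when $\alpha = 1$), whose $\e{\psi(t)}$ is the characteristic function of a non-degenerate $\alpha$-stable law $Y$; continuity of $\phi(t)$ at $0$ gives convergence in distribution by Lévy's continuity theorem.

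The main obstacle is the careful treatment of the large-values integral and the uniformity needed to pass to the limit: one must control $N \E[(\e{\ii s X_1}-1)\lone_{(|X_1|>a_N)}]$ uniformly in the relevant range, splitting into real and imaginary parts, using Potter's bounds on the slowly varying function $L$ to dominate the integrand and justify the limit by dominated convergence, and handling the delicate $\alpha = 1$ case where the naive centering produces an extra $\log$ term (the choice $c_N = N\E[X_1 \lone_{(|X_1|\le a_N)}]$ is exactly the one that tames this). The regular-variation lemmas (Karamata, Potter bounds) and the tail-balance hypothesis are precisely what make each piece converge; once these are in hand the rest is bookkeeping. Since the proposition is a textbook result (see, e.g., Feller Vol. II or Ibragimov–Linnik), in the paper itself I would likely state it with a reference and only sketch the cancellation of $c_N$ and the identification of the limit, rather than reproving the generalized CLT from scratch.
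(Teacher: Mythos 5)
The paper does not actually prove this proposition: it is stated with a pointer to Theorem 3.8.2 of Durrett's textbook, which is exactly the course of action you suggest in your final sentence, so your treatment is consistent with the paper's. Your characteristic-function sketch is the standard argument behind that citation and is sound in outline (cancellation of $c_N$ against the compensated drift, Karamata/Potter bounds for the truncated moments, convergence of the tail integral to the $\alpha$-stable L\'evy measure with skewness $\theta$, and the separate bookkeeping at $\alpha=1$). One imprecision worth fixing if you were to write it out: the small-values piece $N\,\E\bigl[(e^{\ii s X_1}-1-\ii s X_1)\mathbbm{1}_{(|X_1|\le a_N)}\bigr]$ does \emph{not} contribute a genuine quadratic-in-$t$ (Gaussian) term in the limit; the bound $\tfrac12 s^2x^2$ together with $N a_N^{-2}\E[X_1^2\mathbbm{1}_{(|X_1|\le a_N)}]\to\alpha/(2-\alpha)$ only gives uniform boundedness, and the actual limit of this piece is the compensated L\'evy integral $\int_{|y|\le 1}(e^{\ii t y}-1-\ii t y)\,\dd\nu_\alpha(y)$, obtained by a further split at $\epsilon a_N$ and letting $\epsilon\to0$; for $\alpha<2$ there is no Gaussian component. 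With that adjustment the sketch assembles into the classical generalized CLT, which is all the paper needs.
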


For the proof, see Theorem 3.8.2 in \cite{durrett2019probability}.

\begin{rem}
In Proposition \ref{stable_dist}, we can actually prove that both $c_N/a_N$ and $S_N/a_N$ converge for $\alpha<1$. To see this, for $\alpha<1$, we consider
\[
	\E [|X_1| \mathbbm{1}_{|X_1|\leq a_N}]=\int_{0}^{a_N} x g(x) \, \dd x = \left.-x G(x) \right|_0^{a_N}+\int_{0}^{a_N} G(x) \, \dd x
\]
  where $g$ is the probability density function for $|X_1|$ and $G(x)=\P( |X_1| > x)$. From the definition of $a_N$, we find that $N a_N^{-\alpha}L(a_N) = 1+o(1)$. 
	The last term in the right side of the equation above is then
\[
	\int_{0}^{a_N} G(x)\, \dd x=\int_{0}^{a_N} x^{-\alpha} L(x)\, \dd x =\frac{1}{1-\alpha}a_N^{-\alpha+1}L(a_N)(1+o(1))=\frac{1}{1-\alpha}\frac{a_N}{N}(1+o(1))
\]
by Karamata's theorem. (See, e.g., Thm 8.9.2 of \cite{feller1991introduction}). This implies
\[
	N\E[|X_1| \mathbbm{1}_{|X_1|\leq a_N}]/a_N\to \frac{1}{1-\alpha}-1.
\]
Further, from the assumption on $X_1$, we can find that
\[
	N\E[X_1 \mathbbm{1}_{0\leq X_1\leq a_N}]/a_N\to \theta (\frac{1}{1-\alpha}-1).
\]
We then conclude $$c_N/a_N\to (2\theta-1)(\frac{1}{1-\alpha}-1),$$
which shows that $c_N/a_N$ converges to a constant depending only on the probability density function. Together with Proposition \ref{stable_dist}, this also shows that $S_N/a_N$ converges.
\end{rem}

We can also prove the following bound for $a_N$ in Proposition \ref{stable_dist}, which will be used to prove an asymptotic behavior of the eigenvalue statistics.
\begin{lem} \label{lem:a_N}
Suppose that $X$ is a heavy-tailed random variables with exponent $\alpha$. Let $a_N=\inf\{x: \P(|X|>x)\leq N^{-1}\}$. Then, for any $\epsilon>0$, the following holds for any sufficiently large $N$:
\[
  N^{1/\alpha-\epsilon}\leq a_N\leq N^{1/\alpha+\epsilon}.
\]
In particular, for any $\epsilon>0$,
\[
	\lim_{N \to \infty} a_N/N^{1/\alpha+\epsilon}=0
\]
\end{lem}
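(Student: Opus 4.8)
The plan is to translate the defining condition on $a_N$, namely $\P(|X| > a_N) \approx N^{-1}$, into bounds on $a_N$ using the heavy-tailed structure $\P(|X| > u) = L(u) u^{-\alpha}$ and the slowly varying property of $L$. The key analytic input is Potter's bounds (a standard consequence of slow variation): for any $\eta > 0$ there is $x_0$ such that for all $x \ge x_0$ and all $y \ge x$ we have $x^{-\eta} \ll L(y)/L(x) \ll x^{\eta}$, or in the form we actually need, $L(u)$ is eventually squeezed between $u^{-\eta}$ and $u^{\eta}$ for any fixed $\eta>0$ (since $L(u)/L(1)$ is slowly varying, $L(u) = u^{o(1)}$). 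I would cite this from a standard reference such as Bingham--Goldie--Teugels or Feller, consistent with the Karamata citation already used in the excerpt.

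First I would fix $\epsilon > 0$ and set $\eta = \alpha\epsilon/2$ (say). By slow variation there is $u_0$ such that $u^{-\eta} \le L(u) \le u^{\eta}$ for all $u \ge u_0$. From the definition $a_N = \inf\{x : \P(|X| > x) \le N^{-1}\}$ and right-continuity of $u \mapsto \P(|X| > u)$, one has $\P(|X| > a_N) \le N^{-1}$ while $\P(|X| > a_N - 0) \ge N^{-1}$; in particular $L(a_N) a_N^{-\alpha}$ is of order $N^{-1}$ up to lower-order corrections (and $a_N \to \infty$, so eventually $a_N \ge u_0$). Then from $L(a_N) a_N^{-\alpha} \le N^{-1}$ and $L(a_N) \ge a_N^{-\eta}$ I get $a_N^{-\alpha - \eta} \le N^{-1}$, i.e. $a_N \le N^{1/(\alpha+\eta)} \le N^{1/\alpha}$; to get the stated $a_N \le N^{1/\alpha + \epsilon}$ with room to spare I instead use the reverse inequality $L(a_N) a_N^{-\alpha} \gtrsim N^{-1}$ together with $L(a_N) \le a_N^{\eta}$, yielding $a_N^{-\alpha+\eta} \gtrsim N^{-1}$, hence $a_N \le (CN)^{1/(\alpha - \eta)} \le N^{1/\alpha + \epsilon}$ for large $N$ once $\eta$ is small enough relative to $\alpha$ and $\epsilon$. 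Symmetrically, combining $L(a_N) a_N^{-\alpha} \lesssim N^{-1}$ with $L(a_N) \ge a_N^{-\eta}$ gives $a_N^{-\alpha - \eta} \lesssim N^{-1}$, hence $a_N \ge (cN)^{1/(\alpha+\eta)} \ge N^{1/\alpha - \epsilon}$ for large $N$. This establishes $N^{1/\alpha - \epsilon} \le a_N \le N^{1/\alpha + \epsilon}$, and the final display follows immediately since $N^{1/\alpha + \epsilon/2}/N^{1/\alpha + \epsilon} = N^{-\epsilon/2} \to 0$ (apply the two-sided bound with $\epsilon/2$ in place of $\epsilon$).

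The one point requiring mild care — really the only obstacle — is making the informal statement ``$\P(|X| > a_N) \approx N^{-1}$'' precise at the level needed: the infimum defining $a_N$ is attained in the closure, so one only gets $\P(|X|>a_N) \le N^{-1}$ and $\P(|X|>u) > N^{-1}$ for all $u < a_N$. For the lower bound on $a_N$ the first inequality suffices directly. For the upper bound I would take any $u$ slightly below $a_N$, apply $\P(|X|>u) > N^{-1}$, i.e. $L(u) u^{-\alpha} > N^{-1}$, combine with $L(u) \le u^{\eta}$ to get $u < (N)^{1/(\alpha - \eta)}$, and then let $u \uparrow a_N$ to conclude $a_N \le N^{1/(\alpha-\eta)} \le N^{1/\alpha+\epsilon}$. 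No continuity or atomlessness of $X$ is needed. Everything else is bookkeeping with the exponents, choosing $\eta$ small enough that $1/(\alpha \pm \eta)$ lands inside $(1/\alpha - \epsilon, 1/\alpha + \epsilon)$, and absorbing the $o(1)$ and constant factors into the $N^{\pm \epsilon}$ slack for $N$ large.
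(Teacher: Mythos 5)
Your proof is correct and follows essentially the same route as the paper: both reduce the statement to polynomial (Potter-type) bounds $t^{-\alpha-\eta}\lesssim \P(|X|>t)\lesssim t^{-\alpha+\eta}$ for large $t$ and then read the bounds on $a_N$ off its definition. The only differences are cosmetic: the paper proves the tail bound by an elementary doubling argument (iterating $\P(|X|>2t)/\P(|X|>t)<2^{-\alpha+\epsilon}$) rather than citing Potter's theorem, whereas you cite the standard result and are more explicit about the infimum step (using $\P(|X|>a_N)\le N^{-1}$ together with $\P(|X|>u)>N^{-1}$ for $u<a_N$), which the paper leaves implicit.
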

See Appendix \ref{app:proof} for the proof.
\\In the following lemmas, we prove the asymptotic behavior of $\sum \lambda_i^2$. We begin by considering the following estimate for the moments of $X$.
\begin{lem} \label{lem:X^delta}
   Suppose that the assumptions in Lemma \ref{lem:a_N} holds. Assume further that $X$ is non-negative. Then, for any $0 < \delta < \alpha$, we have
	\[
		\E(X^{\delta})= O(1)
	\]
\end{lem}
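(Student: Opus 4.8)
The plan is to bound $\E(X^{\delta})$ via the layer-cake (tail-integral) representation
\[
	\E(X^{\delta}) = \int_0^{\infty} \delta u^{\delta-1}\, \P(X > u)\, \dd u,
\]
and then split the integral at a threshold, say at $1$. On the bounded piece $\int_0^1 \delta u^{\delta-1} \P(X>u)\, \dd u \leq \int_0^1 \delta u^{\delta-1}\, \dd u = 1$, so this contributes $O(1)$ with no work. The substance is the tail piece $\int_1^{\infty} \delta u^{\delta-1} \P(X>u)\, \dd u$, where we use the hypothesis $\P(X>u) \leq \P(|X|>u) = L(u) u^{-\alpha}$, reducing matters to showing $\int_1^{\infty} u^{\delta-1-\alpha} L(u)\, \dd u < \infty$.

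For the tail integral, since $\delta < \alpha$ we have $\delta - 1 - \alpha < -1$, so the pure power $u^{\delta-1-\alpha}$ is integrable at infinity; the only issue is the slowly varying factor $L(u)$. Here I would invoke the standard fact (a consequence of Karamata's representation/Potter's bounds; see e.g.\ Thm 8.9.2 of \cite{feller1991introduction}, already cited in the paper) that for any slowly varying $L$ and any $\eta > 0$ one has $L(u) = O(u^{\eta})$ as $u \to \infty$. Choosing $\eta$ small enough that $\delta - 1 - \alpha + \eta < -1$, i.e.\ $\eta < \alpha - \delta$, we get $u^{\delta-1-\alpha} L(u) = O(u^{\delta-1-\alpha+\eta})$, which is integrable on $[1,\infty)$. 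Hence the tail piece is also $O(1)$, and adding the two pieces gives $\E(X^{\delta}) = O(1)$. Alternatively, one can cite Lemma \ref{lem:a_N} together with Proposition \ref{stable_dist}-style computations, but the Potter-bound argument is cleaner and self-contained.

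The main (and essentially only) obstacle is making sure the slowly varying function does not spoil integrability near infinity; this is handled entirely by the sub-polynomial growth bound for slowly varying functions. A minor point to address is measurability/finiteness of $L$ on compact sets so that the split at $u=1$ is legitimate, but since $\P(|X|>u)$ is a genuine tail function it is automatically bounded by $1$, so one does not even need to reason about $L$ on $[1,\text{threshold}]$ separately — the bound $\P(X>u) \leq 1$ suffices on any fixed finite window, and the Potter bound takes over for large $u$. No delicate estimate is needed; the statement is robust precisely because $\delta$ is strictly below $\alpha$.
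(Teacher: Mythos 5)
Your proof is correct and follows essentially the same route as the paper: a tail-integral (layer-cake) representation of $\E(X^{\delta})$ combined with the bound $\P(X>u)\leq C_{\epsilon}u^{-\alpha+\epsilon}$ for large $u$, which is exactly the sub-polynomial (Potter-type) control of the slowly varying factor that the paper derives in the proof of Lemma \ref{lem:a_N}. The only cosmetic difference is that the paper splits the integral at $a_N$ and integrates by parts against the density, whereas you split at a fixed threshold, which changes nothing of substance.
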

See Appendix \ref{app:proof} for the proof. As a corollary, we can prove the following lemma.
\begin{lem}\label{lem:sum X^delta}
For non-negative heavy-tailed random variables $X_1,...,X_N$ with exponent $\alpha$,
$$\E((\sum_{i=1}^N X_i)^{\delta})= O(N)$$ for $\delta<\min\{\alpha, 1\}$.
\end{lem}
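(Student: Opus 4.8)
The plan is to reduce the statement for $\delta < \min\{\alpha,1\}$ to the single-variable moment bound in Lemma~\ref{lem:X^delta}, exploiting the sub-additivity inequality $(a+b)^{\delta} \leq a^{\delta} + b^{\delta}$ valid for non-negative $a,b$ and $0 < \delta \leq 1$. Iterating this gives $\left(\sum_{i=1}^N X_i\right)^{\delta} \leq \sum_{i=1}^N X_i^{\delta}$ pointwise. Taking expectations and using linearity,
\[
	\E\left(\left(\sum_{i=1}^N X_i\right)^{\delta}\right) \leq \sum_{i=1}^N \E(X_i^{\delta}).
\]
Since the $X_i$ are identically distributed heavy-tailed random variables with exponent $\alpha$, and $\delta < \alpha$, each term $\E(X_i^{\delta})$ is $O(1)$ by Lemma~\ref{lem:X^delta} (whose hypotheses are exactly that $X_i$ is non-negative heavy-tailed with exponent $\alpha$ and $0 < \delta < \alpha$). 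Hence the right side is $N \cdot O(1) = O(N)$, which is the claim.

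The one point requiring a moment of care is the elementary inequality $(a+b)^{\delta} \leq a^{\delta} + b^{\delta}$ for $a, b \geq 0$ and $0 < \delta \leq 1$: it follows by homogeneity (reduce to $a + b = 1$, so $a, b \in [0,1]$, then $a^{\delta} \geq a$ and $b^{\delta} \geq b$ give $a^{\delta} + b^{\delta} \geq a + b = 1 = (a+b)^{\delta}$), and one then extends to $N$ summands by an immediate induction. There is no genuine obstacle here — the lemma is purely a bookkeeping corollary of Lemma~\ref{lem:X^delta} — so I would simply state the sub-additivity inequality, apply it, and invoke Lemma~\ref{lem:X^delta}. The only thing to double-check is that the constant in $O(N)$ is uniform, which it is, since all $X_i$ have the same distribution and the $O(1)$ bound in Lemma~\ref{lem:X^delta} is a property of that common distribution.
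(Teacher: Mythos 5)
Your proof is correct and essentially the same as the paper's: the paper also bounds $\E\bigl(\bigl(\sum_i X_i\bigr)^{\delta}\bigr)$ by $\sum_i \E(X_i^{\delta})$ and invokes Lemma \ref{lem:X^delta}, though it attributes the key pointwise bound to Jensen's inequality rather than spelling out the sub-additivity $(a+b)^{\delta}\leq a^{\delta}+b^{\delta}$ as you do. Your explicit justification is, if anything, the cleaner way to state that step, since it avoids any appeal to a first moment that may not exist when $\alpha<1$.
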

\begin{proof}
Jensen's inequality implies $\E((\sum_{i=1}^N X_i)^{\delta})\leq \sum_{i=1}^N \E(X_i^{\delta})=O(N).$
\end{proof}
We next show that $\sum_i \lambda_i^2<N^{4/\alpha+\epsilon}$ holds with high probability.
\begin{lem}\label{lem:sum_eig_sq_bound}
    For eigenvalues $\lambda_1,\cdots,\lambda_N$ of $M$,
    $$\P(\sum_{i} \lambda_i^2> N^{4/\alpha+\epsilon})= O(N^{-\alpha\epsilon/4}).$$
    In particular, $$\sum_{i} \lambda_i^2 <N^{4/\alpha+\epsilon}$$
		with high probability.
\end{lem}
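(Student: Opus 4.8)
The plan is to bound $\sum_i \lambda_i^2 = \operatorname{Tr}(M^2)$ directly in terms of the entries of $M$, since $\operatorname{Tr}(M^2) = \sum_{i,j} M_{ij}^2 = \sum_i M_{ii}^2 + 2\sum_{i<j} M_{ij}^2$. The key observation is that all of these are i.i.d.\ copies (up to the symmetry) of $X^2$ where $X = M_{11}$, and $X^2$ is a non-negative heavy-tailed random variable whose tail exponent is $\alpha/2 < 1$ (indeed $\P(X^2 > u) = \P(|X| > \sqrt{u}) = L(\sqrt u) u^{-\alpha/2}$, and $L(\sqrt\cdot)$ is still slowly varying). So $\operatorname{Tr}(M^2)$ is a sum of $\Theta(N^2)$ i.i.d.\ non-negative heavy-tailed random variables with exponent $\alpha/2$.

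First I would apply Lemma \ref{lem:sum X^delta} to the collection $\{M_{ij}^2 : 1 \le i \le j \le N\}$ (a set of $N(N+1)/2 = \Theta(N^2)$ variables) with exponent $\alpha/2$; taking any $\delta$ with $0 < \delta < \min\{\alpha/2, 1\} = \alpha/2$ gives
\[
	\E\Big(\big(\textstyle\sum_i \lambda_i^2\big)^{\delta}\Big) = \E\Big(\big(\textstyle\sum_{i \le j} M_{ij}^2 \,(1+\lone_{i<j})\big)^{\delta}\Big) = O(N^2).
\]
(One can absorb the extra factor of $2$ on the off-diagonal terms harmlessly, e.g.\ by comparing with $\sum_{i\le j} (2M_{ij}^2) = \sum_{i \le j} (\sqrt 2 M_{ij})^2$, still i.i.d.\ heavy-tailed with exponent $\alpha/2$, so the $O(N^2)$ bound persists.) Then I would apply Markov's inequality to the $\delta$-th moment:
\[
	\P\Big(\textstyle\sum_i \lambda_i^2 > N^{4/\alpha+\epsilon}\Big) = \P\Big(\big(\textstyle\sum_i \lambda_i^2\big)^{\delta} > N^{(4/\alpha+\epsilon)\delta}\Big) \le \frac{\E\big((\sum_i \lambda_i^2)^{\delta}\big)}{N^{(4/\alpha+\epsilon)\delta}} = O\big(N^{2 - (4/\alpha+\epsilon)\delta}\big).
\]
To make the exponent as small (negative) as possible I would choose $\delta$ close to its supremum $\alpha/2$; letting $\delta \uparrow \alpha/2$ the exponent tends to $2 - (4/\alpha+\epsilon)(\alpha/2) = 2 - 2 - \alpha\epsilon/2 = -\alpha\epsilon/2$. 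Choosing $\delta = \alpha/2 - \epsilon'$ for a suitably small $\epsilon' > 0$ (depending on $\epsilon$) yields exponent $\le -\alpha\epsilon/4$ for all large $N$, which gives the claimed $O(N^{-\alpha\epsilon/4})$. Since $N^{-\alpha\epsilon/4} \to 0$ polynomially, the "in particular" statement (that $\sum_i \lambda_i^2 < N^{4/\alpha+\epsilon}$ with high probability, in the sense of Definition of high probability event) follows immediately by taking $\delta_{\mathrm{hp}} = \alpha\epsilon/4$.

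The only mildly delicate point — and the step I would be most careful about — is the bookkeeping of constants: verifying that $X^2$ (and $\sqrt 2 M_{ij}$, etc.) genuinely satisfies Definition \ref{stable law} with exponent $\alpha/2$, including the technical monotonicity assumption on the slowly varying part, and checking that Lemma \ref{lem:sum X^delta}'s hypothesis $\delta < \min\{\alpha/2,1\}$ leaves enough room to push the Markov exponent below $-\alpha\epsilon/4$ (it does, since for $\epsilon$ fixed we only need $\delta$ within $O(\epsilon)$ of $\alpha/2$, and $\alpha/2 < 1$ is automatic as $\alpha < 2$). No scaling/normalization by $b_N$ enters here because the bound is stated in powers of $N$, not $b_N$; indeed $b_N \asymp N^{2/\alpha}$ up to slowly varying corrections, so $N^{4/\alpha}$ is comparable to $b_N^2$, consistent with $\operatorname{Tr}((b_N^{-1}M)^2)$ being $O(N^{1+\epsilon'})$, but we do not need this interpretation for the proof.
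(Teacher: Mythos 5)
Your proposal is correct and follows essentially the same route as the paper: write $\sum_i \lambda_i^2 = \sum_{i,j} M_{ij}^2$, note that $M_{ij}^2$ is heavy-tailed with exponent $\alpha/2 < 1$, apply Lemma \ref{lem:sum X^delta} to get $\E\bigl(\bigl(\sum_i \lambda_i^2\bigr)^{\delta}\bigr) = O(N^2)$, and then use Markov's inequality with $\delta$ slightly below $\alpha/2$ (the paper takes $\delta = \alpha/2 - \epsilon/100$). Your extra care about the off-diagonal factor of $2$ and the slowly varying part of the law of $X^2$ only makes the argument more complete.
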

See Appendix \ref{app:proof} for the proof.
The next lemma provides a bound for the number of the eigenvalues whose sizes are comparable to that of the largest eigenvalue.
\begin{lem}\label{lem:num_eig} For the eigenvalues $\lambda_1,\cdots,\lambda_N$ of $M$,
    $$\#\{ |\lambda_i|>b_N N^{-\epsilon}\}=O(N^{3\epsilon})$$
with high probability.
\end{lem}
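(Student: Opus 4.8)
The plan is to control the number of large eigenvalues by a second-moment (trace) argument combined with the bound on $\sum_i \lambda_i^2$ already established in Lemma \ref{lem:sum_eig_sq_bound}, together with the estimate $b_N \geq N^{2/\alpha - \epsilon'}$ coming from Lemma \ref{lem:a_N} (applied with the pair index, so that $b_N$ plays the role of $a_{N(N+1)/2}$, hence $b_N = N^{2/\alpha + o(1)}$). First I would observe the elementary inequality
\[
	\#\{ i : |\lambda_i| > b_N N^{-\epsilon} \} \cdot (b_N N^{-\epsilon})^2 \;\leq\; \sum_i \lambda_i^2,
\]
so that on the high-probability event of Lemma \ref{lem:sum_eig_sq_bound} (where $\sum_i \lambda_i^2 < N^{4/\alpha + \epsilon}$) we get
\[
	\#\{ i : |\lambda_i| > b_N N^{-\epsilon} \} \;<\; \frac{N^{4/\alpha+\epsilon}}{b_N^2 N^{-2\epsilon}} \;=\; \frac{N^{4/\alpha + 3\epsilon}}{b_N^2}.
\]
Plugging in $b_N \geq N^{2/\alpha - \epsilon'}$ for an arbitrarily small $\epsilon' > 0$ gives the bound $N^{3\epsilon + 2\epsilon'}$, which is $O(N^{3\epsilon})$ after relabeling the small constants (one should choose the free parameters in Lemmas \ref{lem:a_N} and \ref{lem:sum_eig_sq_bound} proportionally smaller than the target $\epsilon$ at the outset so that the final exponent is genuinely $\leq 3\epsilon$; alternatively simply absorb the extra $\epsilon'$-terms into a slightly larger multiple of $\epsilon$, which is harmless since $\epsilon$ is arbitrary).

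The only subtlety is bookkeeping: one must make sure the two high-probability events (the $\sum_i\lambda_i^2$ bound and the lower bound on $b_N$) are intersected, which still has probability $1 - O(N^{-\alpha\epsilon/4})$ by a union bound, and one must be careful that the lower bound on $b_N$ is deterministic (it follows from the definition \eqref{eq:b_N} and Lemma \ref{lem:a_N}, not a probabilistic statement), so intersecting costs nothing. I would then conclude that the displayed bound holds with high probability, completing the proof.

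I do not expect a genuine obstacle here — the argument is a one-line Markov/trace estimate. The only place requiring a little care is ensuring the exponent arithmetic closes with the stated $3\epsilon$ rather than some larger multiple; this is resolved simply by choosing the auxiliary small parameters (the $\epsilon$ in Lemma \ref{lem:sum_eig_sq_bound} and the $\epsilon'$ implicit in $b_N = N^{2/\alpha + o(1)}$) to be a suitably small fraction of the target, e.g. each at most $\epsilon/2$, so that $N^{4/\alpha+\epsilon}/(b_N^2 N^{-2\epsilon})$ is at most $N^{3\epsilon}$ up to a constant.
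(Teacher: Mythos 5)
Your proposal is correct and follows essentially the same route as the paper: both combine the high-probability bound $\sum_i \lambda_i^2 < N^{4/\alpha+\epsilon}$ from Lemma \ref{lem:sum_eig_sq_bound} with the polynomial estimate $b_N = N^{2/\alpha+o(1)}$ (via Lemma \ref{lem:a_N}) to get $\sum_i \lambda_i^2 \leq b_N^2 N^{\epsilon}$, and then apply the one-line Chebyshev counting bound $\#\{|\lambda_i|>b_N N^{-\epsilon}\} \leq N^{2\epsilon} b_N^{-2}\sum_i\lambda_i^2 = O(N^{3\epsilon})$. Your remark that the bound on $b_N$ is deterministic is a small but valid clarification of the paper's phrasing.
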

See Appendix \ref{app:proof} for the proof.
Furthermore, we can prove that $\sum \lambda_i^2/b_N^2$ converges in distribution to a random variable, which will be used to show that our free energy also converges in distribution to a random variable.
\begin{lem}\label{lem:trace_square}
  The eigenvalues $\lambda_1,..,\lambda_N$ of M satisfy $$\sum_{i} \lambda_i^2/ b_N^2 \Rightarrow X$$ for some non-degenerate random variable $X$.
\end{lem}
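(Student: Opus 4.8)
The plan is to reduce the convergence of $\sum_i \lambda_i^2/b_N^2 = \|M\|_{HS}^2 / b_N^2$ to a statement purely about the entries of $M$, and then invoke the stable limit theorem (Proposition \ref{stable_dist}) for the sum of i.i.d.\ heavy-tailed variables. First I would write
\[
	\sum_i \lambda_i^2 = \Tr(M^2) = \sum_{i,j} M_{ij}^2 = \sum_i M_{ii}^2 + 2\sum_{i<j} M_{ij}^2.
\]
The diagonal contribution $\sum_i M_{ii}^2$ is a sum of $N$ i.i.d.\ terms, whereas the off-diagonal contribution $2\sum_{i<j} M_{ij}^2$ is (twice) a sum of $N(N+1)/2 - N = \binom{N}{2}$ i.i.d.\ terms; the latter dominates since there are of order $N^2$ of them versus $N$ of the former. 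Each summand $M_{ij}^2$ is a nonnegative heavy-tailed random variable: from $\P(|M_{ij}| > u) = L(u)u^{-\alpha}$ we get $\P(M_{ij}^2 > u) = \P(|M_{ij}| > \sqrt u) = L(\sqrt u) u^{-\alpha/2}$, so $M_{ij}^2$ is heavy-tailed with exponent $\alpha/2 \in (0,1)$ and slowly varying function $u \mapsto L(\sqrt u)$.

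Next I would identify the normalizing constant. For a sum of $m$ i.i.d.\ copies of $M_{ij}^2$, Proposition \ref{stable_dist} uses $a_m = \inf\{x : \P(M_{ij}^2 > x) \le m^{-1}\}$, and since $\P(M_{ij}^2 > x) = \P(|M_{ij}| > \sqrt x)$ this is exactly $(\,\inf\{t : \P(|M_{ij}| > t) \le m^{-1}\}\,)^2$. With $m = \binom{N}{2} \sim N^2/2$, and recalling $b_N = \inf\{t : \P(|M_{11}| > t) \le \tfrac{2}{N(N+1)}\} = \inf\{t : \P(|M_{11}| > t) \le \binom{N}{2}^{-1} \cdot (1 + o(1))^{-1}\}$ (up to the slowly varying correction which is negligible on the scale $N^{2/\alpha}$ by regular variation / Lemma \ref{lem:a_N}), the scale for the off-diagonal sum is $\asymp b_N^2$, up to a constant factor coming from the $1+o(1)$ discrepancy between $\tfrac{2}{N(N+1)}$ and $\binom{N}{2}^{-1}$, which converges. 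Because $\alpha/2 < 1$, the Remark following Proposition \ref{stable_dist} applies: the centering term $c_m = m\,\E[M_{ij}^2 \mathbbm 1_{M_{ij}^2 \le a_m}]$ satisfies $c_m/a_m \to \tfrac{1}{1-\alpha/2} - 1$, a finite constant, so $c_m/b_N^2$ converges and hence $2\sum_{i<j} M_{ij}^2/b_N^2$ converges in distribution to a nondegenerate limit (an $\alpha/2$-stable law, suitably shifted and scaled). Meanwhile $\sum_i M_{ii}^2/b_N^2 \to 0$ in probability: its scale is $a_N' \asymp (\inf\{t:\P(|M_{11}|>t)\le N^{-1}\})^2 = o(N^{2/\alpha + \epsilon})$ for every $\epsilon$ by Lemma \ref{lem:a_N}, which is $o(b_N^2)$; combined with the corresponding vanishing of the centering (again using $\alpha/2<1$), Slutsky's theorem gives the claim.

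The main obstacle I anticipate is purely bookkeeping rather than conceptual: carefully tracking that the various normalizing sequences $a_m$ for $m \in \{N, \binom{N}{2}\}$ are all comparable to powers of $b_N$ up to convergent constants, using that $L$ is slowly varying so that $L(\sqrt{b_N^2 t}) / L(b_N) \to 1$ and $b_N$ is regularly varying with index $2/\alpha$. One must be slightly careful that the definition of $b_N$ uses the threshold $\tfrac{2}{N(N+1)}$ while the natural threshold for $\binom N2$ i.i.d.\ squared entries is $\binom N2^{-1} = \tfrac{2}{N(N-1)}$; the ratio of these is $1 + O(1/N) \to 1$, and by regular variation the corresponding ratio of inverse-tail quantiles also tends to $1$, so this introduces no spurious constant. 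Finally, although the statement only claims convergence to "some non-degenerate random variable $X$," it is worth noting that this $X$ is, up to the deterministic shift and scale above, a one-sided $\alpha/2$-stable random variable, which makes transparent why it is non-degenerate.
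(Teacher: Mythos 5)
Your proposal is correct and follows essentially the same route as the paper: both reduce $\sum_i \lambda_i^2$ to $\sum_{i,j} M_{ij}^2$, observe that the squared entries are heavy-tailed with exponent $\alpha/2<1$, and apply Proposition \ref{stable_dist} together with its remark so that the centering is comparable to the scale $b_N^2$. The only differences are cosmetic: the paper sums over the upper triangle $i\le j$ (where the normalizer is exactly $b_N^2$ by the definition of $b_N$) and subtracts the diagonal, whereas you split off the diagonal first and handle the $\binom{N}{2}$-versus-$\tfrac{N(N+1)}{2}$ quantile discrepancy by regular variation, in fact spelling out the negligibility of $\sum_i M_{ii}^2/b_N^2$ more explicitly than the paper does.
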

See Appendix \ref{app:proof} for the proof.

In the following lemma, we show that the limit of $\sum_i \lambda_i/b_N$ is $0$.
\begin{lem}\label{lem:lim_of_Tr}
     The eigenvalues $\lambda_1,..,\lambda_N$ of $M$ satisfies $$\lim_{N\to\infty}\sum_{i=1}^N \lambda_i/ b_N =0.$$ 
\end{lem}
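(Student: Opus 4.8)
The plan is to show that $\sum_i \lambda_i / b_N = \Tr(M)/b_N = \sum_{i=1}^N M_{ii}/b_N \to 0$. Since the diagonal entries $M_{ii}$ are i.i.d.\ heavy-tailed random variables with exponent $\alpha$, I would first apply the results on sums of i.i.d.\ heavy-tailed variables. Let $a_N^{\mathrm{diag}} = \inf\{x : \P(|M_{11}| > x) \leq N^{-1}\}$; by Proposition \ref{stable_dist} the sum $(\sum_i M_{ii} - c_N)/a_N^{\mathrm{diag}}$ converges in distribution to a non-degenerate limit, where $c_N = N\,\E[M_{11}\mathbbm{1}_{|M_{11}|\leq a_N^{\mathrm{diag}}}]$. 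By Lemma \ref{lem:a_N}, $a_N^{\mathrm{diag}} \leq N^{1/\alpha + \epsilon}$ for any $\epsilon>0$, whereas $b_N$ is of order $N^{2/\alpha}$ (more precisely, by the same reasoning as Lemma \ref{lem:a_N} applied to the threshold $\frac{2}{N(N+1)}$, $b_N \geq N^{2/\alpha - \epsilon}$). Hence $a_N^{\mathrm{diag}}/b_N \to 0$, which kills the fluctuation term.

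The remaining point is the centering $c_N/b_N$. Here I would split into cases on $\alpha$. For $\alpha < 1$, the Remark following Proposition \ref{stable_dist} already shows $c_N/a_N^{\mathrm{diag}}$ converges to a constant, so $c_N/b_N = (c_N/a_N^{\mathrm{diag}})\cdot(a_N^{\mathrm{diag}}/b_N) \to 0$. For $1 \leq \alpha < 2$, one should instead note that $M_{11}$ is centerable: either $M_{11}$ has mean zero (in which case $\E[M_{11}\mathbbm{1}_{|M_{11}|\leq a_N^{\mathrm{diag}}}] = -\E[M_{11}\mathbbm{1}_{|M_{11}|> a_N^{\mathrm{diag}}}]$, whose absolute value is bounded by $\E[|M_{11}|\mathbbm{1}_{|M_{11}|>a_N^{\mathrm{diag}}}]$, which by Karamata's theorem is $O(a_N^{\mathrm{diag}}/N)$ when $\alpha>1$, giving $|c_N| = O(a_N^{\mathrm{diag}}) = o(b_N)$), or more robustly one uses $\E[|M_{11}|^\delta] = O(1)$ from Lemma \ref{lem:X^delta} applied to $|M_{11}|$ with $\delta \in (1,\alpha)$ when $\alpha>1$, so that $|c_N| \leq N\,\E|M_{11}| = O(N) = o(N^{2/\alpha}) = o(b_N)$ since $2/\alpha > 1$. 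The borderline $\alpha = 1$ is the only genuinely delicate case: there $\E|M_{11}| = \infty$ in general, so I would fall back on the Karamata estimate for the truncated mean, $\E[|M_{11}|\mathbbm{1}_{|M_{11}|\leq a_N^{\mathrm{diag}}}] = O(\log N)$ (slowly varying correction), giving $|c_N| = O(N\log N) = o(N^2) = o(b_N)$.

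Combining: $\sum_i \lambda_i/b_N = c_N/b_N + (a_N^{\mathrm{diag}}/b_N)\cdot\big((\sum_i M_{ii} - c_N)/a_N^{\mathrm{diag}}\big)$, where the first term tends to $0$ deterministically, and in the second term $a_N^{\mathrm{diag}}/b_N \to 0$ while the bracketed factor is tight (converging in distribution), so the product tends to $0$ in probability — hence in distribution to $0$, and since the limit is deterministic the statement follows. The main obstacle is the treatment of the centering constant $c_N$ uniformly across $\alpha \in (0,2)$, particularly at and near $\alpha = 1$ where no moment beyond order $1-\epsilon$ is available; the cleanest uniform route is to bound $|c_N|$ via the truncated-mean asymptotics $\E[|M_{11}|\mathbbm{1}_{|M_{11}|\leq a_N^{\mathrm{diag}}}] = O\big((a_N^{\mathrm{diag}})^{1-\alpha} L(a_N^{\mathrm{diag}})\big)$ for $\alpha<1$ and $O\big(\log a_N^{\mathrm{diag}}\cdot(\text{slowly varying})\big)$ for $\alpha \geq 1$ from Karamata's theorem, and in all cases check this is $o(b_N)$ using $b_N \asymp N^{2/\alpha}$ and $a_N^{\mathrm{diag}} \asymp N^{1/\alpha}$.
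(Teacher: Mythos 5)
Your proposal is correct and takes essentially the same route as the paper: write $\sum_i\lambda_i=\Tr(M)=\sum_i M_{ii}$, apply Proposition \ref{stable_dist} together with $a_N/b_N\to 0$ for the fluctuation, and control the centering $c_N$ by splitting into the cases $\alpha<1$ (the Remark after Proposition \ref{stable_dist}), $\alpha=1$ (truncated mean of size $N^{o(1)}$ via Karamata, where your ``$O(\log N)$'' should carry the slowly varying correction but the conclusion $c_N=o(b_N)$ is unaffected), and $\alpha>1$ (truncated mean $O(1)$, so $c_N=O(N)=o(b_N)$ since $2/\alpha>1$).
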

See Appendix \ref{app:proof} for the proof.

\subsection{Integral representation formula for the partition function}

\begin{lem} \label{lem:integral representation}
Let $Z_N$ be the partition function defined in Definition \ref{partition function}. Then,
\beq
 Z_N = C_N \frac{1}{i} \int_{\gamma-i\infty}^{\gamma+i\infty} e^{\frac{N}{2b_N}G(z)} \dd z
\eeq
for any $\gamma > \lambda_1$, where
\beq
  G(z)=2\beta z-\frac{b_N}{N}\sum_i \log(z-\lambda_i), \quad C_N=\frac{\Gamma(N/2)b_N^{N/2-1}}{2\pi (N\beta)^{N/2-1}}.
\eeq
\end{lem}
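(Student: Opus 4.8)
The plan is to follow the scheme behind Lemma 1.3 of \cite{2016fluctuations}: reduce the sphere integral to a one-dimensional Gaussian computation, evaluate that computation in two different ways to extract a product formula, and then invert an exponential via a Bromwich integral. First I would use rotational invariance of the Haar measure to assume $M=\diag(\lambda_1,\dots,\lambda_N)$, and rescale $x=\sqrt N\,\omega$ so that $S_N$ becomes the unit sphere $S^{N-1}$ and $\dd\Omega_N$ becomes $\omega_{N-1}^{-1}\dd\omega$, where $\dd\omega$ is surface measure and $\omega_{N-1}=2\pi^{N/2}/\Gamma(N/2)$ its total mass; writing $\psi(\omega):=\tfrac{\beta}{b_N}\langle\omega,M\omega\rangle$ this gives $Z_N=\omega_{N-1}^{-1}\int_{S^{N-1}}e^{N\psi(\omega)}\,\dd\omega$.

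Next, for complex $z$ with $\re z>\tfrac{\beta\lambda_1}{b_N}$ the Gaussian integral $I(z):=\int_{\R^N}\exp(-z\|x\|^2+\tfrac{\beta}{b_N}\langle x,Mx\rangle)\,\dd x$ converges absolutely, since the real part of the exponent is $\le(\tfrac{\beta\lambda_1}{b_N}-\re z)\|x\|^2$. Evaluating it by completing the square coordinate-wise gives $I(z)=\pi^{N/2}\prod_i(z-\beta\lambda_i/b_N)^{-1/2}$, whereas polar coordinates $x=r\omega$ together with $\int_0^\infty r^{N-1}e^{-ar^2}\dd r=\tfrac12\Gamma(N/2)a^{-N/2}$ give $I(z)=\tfrac12\Gamma(N/2)\int_{S^{N-1}}(z-\psi(\omega))^{-N/2}\dd\omega$. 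Equating the two expressions yields $\int_{S^{N-1}}(z-\psi(\omega))^{-N/2}\dd\omega=\tfrac{2\pi^{N/2}}{\Gamma(N/2)}\prod_i(z-\beta\lambda_i/b_N)^{-1/2}$ on that half-plane, all powers taken with principal branches.

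Then I would invert the exponential using the Bromwich identity $\tfrac{1}{2\pi i}\int_{\gamma-i\infty}^{\gamma+i\infty}e^{Nz}(z-a)^{-N/2}\dd z=\tfrac{N^{N/2-1}}{\Gamma(N/2)}e^{Na}$ for $\gamma>\re a$ (proved by $z\mapsto z-a$ and deforming to a Hankel contour), applied with $a=\psi(\omega)$ and integrated over $S^{N-1}$. Swapping the surface integral with the contour integral — legitimate because on $\re z=\gamma$ the integrand is $O(e^{N\gamma}(1+|\im z|)^{-N/2})$ uniformly in $\omega$, which is integrable in $\im z$ for $N\ge 3$ — and inserting the identity from the previous step gives $\int_{S^{N-1}}e^{N\psi(\omega)}\dd\omega=\tfrac{2\pi^{N/2}}{N^{N/2-1}}\cdot\tfrac{1}{2\pi i}\int_{\gamma-i\infty}^{\gamma+i\infty}e^{Nz}\prod_i(z-\beta\lambda_i/b_N)^{-1/2}\dd z$ for every $\gamma>\tfrac{\beta\lambda_1}{b_N}$. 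Finally the change of variables $z=\tfrac{\beta}{b_N}w$ turns the exponent into $\tfrac{N}{2b_N}G(w)$ (using $e^{N\beta w/b_N}\prod_i(w-\lambda_i)^{-1/2}=e^{\frac{N}{2b_N}G(w)}$) and the constraint into $\re w>\lambda_1$; dividing by $\omega_{N-1}$ and collecting powers of $b_N,\beta,N$ and the Gamma factors reproduces exactly $C_N=\Gamma(N/2)b_N^{N/2-1}/(2\pi(N\beta)^{N/2-1})$, and arbitrariness of $\gamma>\lambda_1$ is clear since the derivation was valid for every admissible $\gamma$ (equivalently, the contour integrand is analytic with the stated decay on $\{\re w>\lambda_1\}$).

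The main obstacle — really the only subtle point — is the interchange of the surface integral with the Bromwich integral: the latter converges only conditionally, its integrand decaying like $|\im z|^{-N/2}$, so absolute convergence and Fubini require $N\ge 3$. Since the formula is only needed for large $N$ this costs nothing, but it should be stated explicitly (alternatively one smooths the inverse-Laplace kernel and passes to the limit). Everything else is the classical complex-Gaussian evaluation, the Bromwich formula (both valid for complex parameters in the relevant half-plane), and bookkeeping with $\Gamma(N/2)$ and the volume of the sphere.
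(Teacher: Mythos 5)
Your proposal is correct and follows essentially the same route as the paper's proof: diagonalize $M$, evaluate the Gaussian integral $\int_{\R^N}e^{-z\|x\|^2+\beta b_N^{-1}\langle x,Mx\rangle}\,\dd x$ both by completing the square and in polar coordinates, and recover the sphere integral by an inverse Laplace (Bromwich) contour integral, with the constants matching $C_N$ after the change of variables $z=\beta w/b_N$. The only (harmless) organizational difference is that the paper inverts the Laplace transform of $t^{N/2-1}I(t)$ directly, while you invert the kernel $(z-a)^{-N/2}$ pointwise in $\omega$ and then apply Fubini, for which your explicit remark on absolute convergence (requiring $N\ge 3$) is a welcome extra precaution.
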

The proof of Lemma \ref{lem:integral representation} is standard (see, e.g., \cite{2016fluctuations}). For the detailed proof, see Appendix \ref{app:proof}.

Since $G'$ is an increasing function on the interval $(\lambda_1,\infty)$ with 
\[
   \lim_{z\to \lambda_1^+}G'(z)=-\infty, \quad \lim_{z\to \infty}G'(z)=2\beta>0,
\]
it is immediate to see that there exists a unique solution to the equation $G'(z)=0$, which with abuse of notation we call $\gamma$ in this section. In the following lemma, we prove a result on the relation between the critical point $\gamma$ and $\lambda_1$.

\begin{lem} \label{lem:gamma}
Set $\gamma \in (\lambda_1,\infty)$ as the (unique) solution to equation $G'(z) = 0$. Then,
\begin{itemize}
    \item If $\lambda_1<\frac{b_N}{2\beta}$, there exists a random variable $X_N$ such that 
		\[	
			\gamma=\frac{b_N}{2\beta}(1+\frac{X_N}{N})
		\]
		and $X_N$ converges in distribution to a non-degenerate random variable $X$ as $N \to \infty$.
    \item If $\lambda_1>\frac{b_N}{2\beta}$, for any $0<\epsilon<1/8$, 
		\[
			\gamma= \lambda_1+\frac{1}{2\beta-b_N/\lambda_1}\frac{b_N}{N}+O(\frac{b_N}{N^{1+\epsilon}})
		\]
		with high probability.
\end{itemize}

\end{lem}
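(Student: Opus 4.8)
The plan is to pin down the unique critical point $\gamma\in(\lambda_1,\infty)$ of $G$ — which exists by the monotonicity of $G'$ noted above — by solving $G'(\gamma)=0$, i.e.\ $2\beta=\frac{b_N}{N}\sum_i(\gamma-\lambda_i)^{-1}$, perturbatively around the natural guess: $b_N/(2\beta)$ in the first case and $\lambda_1$ in the second. Throughout I work on the intersection of the high-probability events furnished by Lemmas \ref{lem:gap_eig}, \ref{lem:num_eig} and \ref{lem:sum_eig_sq_bound}, and it is convenient to set $\mu_i:=2\beta\lambda_i/b_N$; then $\sum_i\mu_i^2=(2\beta)^2\sum_i\lambda_i^2/b_N^2$ is tight and converges in distribution by Lemma \ref{lem:trace_square}, $\sum_i\mu_i\to0$ by Lemma \ref{lem:lim_of_Tr}, and all but $O(N^{3\epsilon})$ of the $\mu_i$ satisfy $|\mu_i|\le 2\beta N^{-\epsilon}$ by Lemma \ref{lem:num_eig}. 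Moreover the ordering $\mu_1\ge\cdots\ge\mu_N$ and the third bullet of Lemma \ref{lem:gap_eig} give, on the event $\{\lambda_1<b_N/(2\beta)\}$, that $1-\mu_i\ge1-\mu_1\ge N^{-\epsilon}$ for every $i$.

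For the first bullet, substituting $\gamma=\frac{b_N}{2\beta}(1+\tfrac tN)$ into $G'(\gamma)=0$ yields, after a short manipulation, the scalar fixed-point equation $t=\Phi_N(t)$ with
\[
  \Phi_N(t):=\sum_i\frac{\mu_i}{1+t/N-\mu_i}=\sum_i\mu_i+\sum_i\frac{\mu_i^2}{1+t/N-\mu_i}-\frac{t^2}{N}.
\]
I would first check that $\Phi_N(t)-t$ is strictly decreasing on $(N(\mu_1-1),\infty)$, with $\Phi_N(t)\to+\infty$ as $t\downarrow N(\mu_1-1)$ and $\Phi_N(t)-t\to-\infty$ as $t\to\infty$, so that it has a unique zero; this zero is the required $X_N$, since $\gamma>\lambda_1$ forces $1+X_N/N-\mu_i>0$ for all $i$. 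The derivative $\Phi_N'(t)=-\frac1N\sum_i\mu_i(1+t/N-\mu_i)^{-2}$ is controlled by splitting off the $O(N^{3\epsilon})$ ``large'' indices (using $1-\mu_i\ge N^{-\epsilon}$, and $1+t/N-\mu_i\ge\mu_1\ge2\beta N^{-\epsilon}$ for the negative ones) from the rest (via $\frac1N\sum_i|\mu_i|\le N^{-1/2}(\sum_i\mu_i^2)^{1/2}$), giving $\sup_{|t|\le\log N}|\Phi_N'(t)|=o(1)$; a contraction-mapping argument on a bounded interval, of length governed by a tightness bound for $\Phi_N(0)$, then gives $X_N=\Phi_N(0)+o(1)$ with $X_N$ tight. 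Finally $\Phi_N(0)=\sum_i\mu_i+\sum_i\frac{\mu_i^2}{1-\mu_i}$: the first sum tends to $0$, and for the second one discards the bulk (whose $\mu_i^2$-mass is $o(1)$) and the ``medium'' eigenvalues $|\lambda_i|\le\delta b_N$ (whose contribution is $\le2\sum_{|\lambda_i|\le\delta b_N}\mu_i^2=O_{\P}(\delta)$, as $|\mu_i|<\tfrac12$ there), leaving the finitely many largest eigenvalues, which on $\{\lambda_1<b_N/(2\beta)\}$ stay bounded away from $b_N/(2\beta)$ — with probability close to $\P(\lambda_1<b_N/(2\beta))$, as $\lambda_1/b_N$ has a continuous limit law — and converge jointly with $\sum_i\mu_i^2$ to a functional of the limiting point process, exactly as in the proof of Lemma \ref{lem:trace_square}. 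The limit $X$ is non-degenerate since $\sum_i\frac{\mu_i^2}{1-\mu_i}$ dominates a fixed multiple of $\sum_i\mu_i^2$, which converges to $(2\beta)^2$ times the non-degenerate limit of Lemma \ref{lem:trace_square}.

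For the second bullet, on $\{\lambda_1>b_N/(2\beta)\}$ I substitute $\gamma=\lambda_1+\frac{b_N}{N}s$ with $s>0$ and isolate the $i=1$ term:
\[
  2\beta=\frac1s+\frac{b_N}{N}\sum_{i\ge2}\frac{1}{\lambda_1-\lambda_i+\frac{b_N}{N}s}.
\]
Since $1/s\ge2\beta-b_N/\lambda_1\gtrsim N^{-\epsilon}$ on this event (third bullet of Lemma \ref{lem:gap_eig}), one has $s=O(N^{\epsilon})$, so $\frac{b_N}{N}s\ll b_N N^{-\epsilon}\le\lambda_1-\lambda_2\le\lambda_1-\lambda_i$ and the shift $\frac{b_N}{N}s$ in the denominators is negligible. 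Using then that $\lambda_i\ll\lambda_1$ for all but $O(N^{3\epsilon})$ of the eigenvalues — controlling $\sum_{i\ge2}\lambda_i$ through $\Tr M$ (Lemma \ref{lem:lim_of_Tr}) together with the bound on $\sum\lambda_i^2$ from Lemma \ref{lem:sum_eig_sq_bound}, and the $O(N^{3\epsilon})$ large ones through $\lambda_1-\lambda_i\ge\lambda_1-\lambda_2>b_N N^{-\epsilon}$ — I get $\frac{b_N}{N}\sum_{i\ge2}\frac{1}{\lambda_1-\lambda_i}=\frac{b_N}{\lambda_1}\bigl(1+O(N^{-c})\bigr)$ for some $c>0$. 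Substituting back gives $\frac1s=2\beta-\frac{b_N}{\lambda_1}+O\!\bigl(\frac{b_N}{\lambda_1}N^{-c}\bigr)$, and inverting (the main term dominates since $2\beta-b_N/\lambda_1\gtrsim N^{-\epsilon}$ while $b_N/\lambda_1\lesssim N^{\epsilon}$) gives $s=\frac{1}{2\beta-b_N/\lambda_1}\bigl(1+O(N^{-c'})\bigr)$, hence, after collecting the error exponents, $\gamma=\lambda_1+\frac{1}{2\beta-b_N/\lambda_1}\frac{b_N}{N}+O\!\bigl(\frac{b_N}{N^{1+\epsilon}}\bigr)$ with high probability.

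The genuinely delicate point is the distributional convergence of $\Phi_N(0)=\sum_i f(\mu_i)$ with $f(x)=x/(1-x)$ in the first bullet: the singularity of $f$ at $x=1$ sits precisely at the threshold separating the two regimes, so — unlike the spectral functionals established in Section \ref{sec:prelim} — its convergence requires both (a) the suppression of the bulk and the ``medium'' eigenvalues, as for $\sum_i\lambda_i^2/b_N^2$, and (b) the joint convergence of the finitely many largest eigenvalues together with $\sum_i\mu_i^2$ to a functional of one limiting point process, with the conditioning on $\{\lambda_1<b_N/(2\beta)\}$ keeping everything away from the pole. The second bullet is by comparison routine error bookkeeping built on Lemmas \ref{lem:gap_eig}, \ref{lem:num_eig}, \ref{lem:sum_eig_sq_bound} and \ref{lem:lim_of_Tr}; the one subtlety there is that $2\beta-b_N/\lambda_1$ can be only polynomially small ($\gtrsim N^{-\epsilon}$), which is what forces the restriction on $\epsilon$ in the statement.
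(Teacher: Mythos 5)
Your proposal is correct and follows essentially the same route as the paper: in the high-temperature case you rewrite the critical-point equation so that $\gamma=\frac{b_N}{2\beta}(1+X_N/N)$ with $X_N=\sum_i\lambda_i/(\gamma-\lambda_i)$, prove its convergence by killing the bulk via Lemmas \ref{lem:lim_of_Tr}, \ref{lem:trace_square} and \ref{lem:num_eig} and treating the finitely many extreme eigenvalues by the truncation/point-process argument of Lemma \ref{lem:T}, and in the low-temperature case you localize $\gamma$ near $\lambda_1+\frac{b_N}{N(2\beta-b_N/\lambda_1)}$ using Lemmas \ref{lem:gap_eig} and \ref{lem:num_eig}, exactly as the paper does. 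The only differences are cosmetic (your contraction-mapping reduction of $X_N$ to $\Phi_N(0)$ versus the paper's sign-check localization of $\gamma$ plus geometric-series expansion), with the minor caveat that your a priori bound $1/s\ge 2\beta-b_N/\lambda_1$ really holds only up to an $O(N^{-c})$ correction, which is harmless under the two-scale choice of exponents you already invoke.
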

For the proof, we can simply approximate $\gamma$ appropriately with some steps. See Appendix \ref{app:proof} for the proof.

\section{High temperature regime} \label{sec:high}
In this section, we consider the regime $\lambda_1<b_N/2\beta$. Recall that we let $\gamma=\frac{b_N}{2\beta}(1+\frac{X_N}{N})$ in Lemma \ref{lem:gamma}.
\begin{prop}\label{high_temp}
    For event $F_1=\left\{\lambda_1< \frac{b_N}{2\beta}\right\}$, 
    \[
        (\log Z_N)|F_1\Rightarrow T
    \]
    for some random variable $T$. Moreover, $\P(F_1) \to \exp(-(2\beta)^{\alpha})$ as $N \to \infty$.
\end{prop}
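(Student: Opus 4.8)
The plan is to use the integral representation \eqref{eq:integral} with the choice $\gamma = b_N/(2\beta)(1 + X_N/N)$ from Lemma \ref{lem:gamma}, and show that on the event $F_1$ the contour integral concentrates near $z = \gamma$ so that $\log Z_N$ is asymptotically a fixed (deterministic in $N$, random in the limit) function of the linear spectral statistic $\frac{b_N}{N}\sum_i \log(\gamma - \lambda_i)$ together with the explicit prefactor $\log C_N$. First I would substitute $z = \gamma + \ii t$, so that $\frac{N}{2b_N}G(\gamma + \ii t) = \frac{N}{2b_N}G(\gamma) - \frac{1}{4}\frac{N}{b_N}\sum_i \frac{(\ii t)^2}{\ldots}$ expanded to second order; since $G'(\gamma) = 0$, the quadratic term governs the Gaussian-type decay, and the effective width of the integral is of order $b_N/\sqrt{N \cdot G''(\gamma)}$. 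Using Lemmas \ref{lem:gap_eig}, \ref{lem:num_eig}, \ref{lem:sum_eig_sq_bound} one controls $G''(\gamma) = \frac{b_N}{N}\sum_i (\gamma - \lambda_i)^{-2}$ and the higher derivatives, showing the tails of the contour integral are negligible and the central contribution is a computable Gaussian integral contributing only a $\log N$-type correction that cancels against $\log C_N$.

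The heart of the matter is then to evaluate the leading term $\frac{N}{2b_N}G(\gamma) + \log C_N$. Writing $G(\gamma) = 2\beta\gamma - \frac{b_N}{N}\sum_i \log(\gamma - \lambda_i)$ and using $\gamma = \frac{b_N}{2\beta}(1 + \frac{X_N}{N})$, the term $\frac{N}{2b_N}\cdot 2\beta\gamma = \frac{N}{2} + \frac{X_N}{2}$. For the logarithmic sum I would split off the contribution of the $O(N^{3\epsilon})$ eigenvalues of size comparable to $b_N$ (Lemma \ref{lem:num_eig}) from the bulk: for bulk eigenvalues $|\lambda_i| \ll \gamma \sim b_N$, so $\log(\gamma - \lambda_i) = \log\gamma - \frac{\lambda_i}{\gamma} - \frac{\lambda_i^2}{2\gamma^2} + \ldots$, and then $\frac{N}{2b_N}\cdot\frac{b_N}{N}\sum_i \log(\gamma-\lambda_i) = \frac{1}{2}\sum_i \log(\gamma - \lambda_i)$. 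The $\sum_i \log\gamma = N\log\gamma$ piece combines with $\log C_N \sim \log(\Gamma(N/2)) + (N/2-1)\log b_N - \cdots - (N/2-1)\log(N\beta)$ via Stirling; the $\sum_i \lambda_i/\gamma$ term vanishes in the limit by Lemma \ref{lem:lim_of_Tr}; the $\sum_i \lambda_i^2/(2\gamma^2) = \frac{1}{2}\cdot\frac{(2\beta)^2}{b_N^2}\sum_i \lambda_i^2 (1 + o(1))$ converges to $2\beta^2 X$ by Lemma \ref{lem:trace_square}; and the finitely-many large eigenvalues contribute a convergent random amount. Collecting all pieces, $\log Z_N$ converges in distribution to an explicit random variable $T$ built from $X$, $X_N \to X$, and the limiting point process of rescaled extreme eigenvalues.

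For the probability statement $\P(F_1) \to \exp(-(2\beta)^\alpha)$, I would simply invoke Proposition \ref{Largest eigenvalue}: $\P(F_1) = \P(\lambda_1 < \frac{b_N}{2\beta}) = \P(\lambda_1/b_N < \frac{1}{2\beta}) \to \exp(-(1/(2\beta))^{-\alpha}) = \exp(-(2\beta)^\alpha)$, noting that the boundary value contributes nothing since the limiting distribution is continuous (and Lemma \ref{lem:gap_eig} keeps $\lambda_1$ away from $b_N/(2\beta)$ with high probability anyway).

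The main obstacle I anticipate is the logarithmic linear spectral statistic $\frac{1}{2}\sum_i \log(\gamma - \lambda_i)$: heavy-tailed matrices have no available CLT for LSS against $\log$, so one cannot quote a standard result. The resolution, as the introduction hints, is that this quantity is not genuinely "spread out" — after the Taylor expansion it reduces to $\frac{N}{2}\log\gamma$ (deterministic, absorbed by $C_N$) plus $\frac{1}{2}\cdot\frac{b_N^{-2}}{}\cdot(\text{const})\sum_i\lambda_i^2$ plus a contribution from finitely many extreme eigenvalues, each of which \emph{does} have a known limit (Lemmas \ref{lem:trace_square}, \ref{lem:lim_of_Tr}, Proposition \ref{Largest eigenvalue}). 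Making the Taylor remainder rigorous — i.e., showing $\sum_i |\lambda_i|^3/\gamma^3 = o(1)$ using the second-moment bound $\sum_i \lambda_i^2 < N^{4/\alpha+\epsilon}$ and $\max_i|\lambda_i| = O(b_N)$, together with $b_N \asymp N^{2/\alpha}$ — is the delicate bookkeeping step, and keeping track of which error terms are $o(1)$ versus $O(1)$-random is where care is needed.
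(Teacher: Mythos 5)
Your overall route coincides with the paper's: the same integral representation, the same choice $\gamma=\frac{b_N}{2\beta}(1+\frac{X_N}{N})$ from Lemma \ref{lem:gamma}, the same Gaussian steepest-descent evaluation of the contour integral with $G''(\gamma)\approx(2\beta)^2/b_N$, cancellation of the deterministic part against $\log C_N$ via Stirling, reduction of the remaining randomness to the logarithmic statistic $-\frac12\sum_i\log(1-\lambda_i/\gamma)$, and Proposition \ref{Largest eigenvalue} for $\P(F_1)$. The gap is in the one step that is genuinely hard: the convergence in distribution of that logarithmic statistic (the paper's Lemma \ref{lem:T}). Your final paragraph proposes to control the Taylor remainder by showing $\sum_i|\lambda_i|^3/\gamma^3=o(1)$; this is false. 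Conditioned on $F_1$, $\lambda_1/\gamma$ converges to a nondegenerate random variable in $(0,1)$ (and similarly $|\lambda_N|/\gamma$), so the cubic and all higher-order terms coming from the top and bottom eigenvalues are $\Theta(1)$ random quantities, not error terms --- they are part of the limit $T$, as your own second paragraph acknowledges. Moreover, ``finitely many extreme eigenvalues'' is not directly available: Lemma \ref{lem:num_eig} only gives $O(N^{3\epsilon})$ eigenvalues exceeding $b_NN^{-\epsilon}$, and the number exceeding $\epsilon\gamma$ is random, so one cannot simply quote a fixed finite collection with known limits.

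What is missing is precisely the argument supplied in Lemma \ref{lem:T}: set $U_N=\sum_i\bigl(-\log(1-\lambda_i/\gamma)-\lambda_i/\gamma-\lambda_i^2/(2\gamma^2)\bigr)$, truncate at level $\epsilon\gamma$ so that the discarded bulk contribution is bounded by $2\epsilon S_2$, show that the contribution $U_{N,k}$ of the top and bottom $k$ eigenvalues converges for each fixed $k$ (via $\lambda_k=M_k(1+o(1))$, i.e.\ the Poisson statistics of the largest entries from \cite{soshnikov2004poisson}), use monotonicity of $k\mapsto U_{N,k}$ together with the domination by $L_N$ to obtain a limit $V_\infty$, and then sandwich $U_N$ between $U_{N,1/\epsilon}-2\sqrt{\epsilon}$ and $U_{N,\epsilon^{-5/2}}+2\sqrt{\epsilon}$ on an event controlling $S_2$, finally letting $\epsilon\to0$. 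Without this double-limit argument (or an equivalent one establishing joint convergence of $S_1$, $S_2$ and the extreme-eigenvalue contributions), your plan does not yield the existence of the limiting variable $T$. The rest of your outline --- the steepest-descent estimate, the Stirling cancellation against $\log C_N$, the use of Lemmas \ref{lem:lim_of_Tr} and \ref{lem:trace_square} for the first- and second-order terms, and the continuity argument for $\P(F_1)$ --- is in line with the paper's proof.
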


Throughout this section, we will assume that $|M_{ij}|<\frac{b_N}{2\beta}(1-N^{-\epsilon})$ for every $i,j$. To check the validity of this assumption, we first notice from the third part of Lemma \ref{lem:gap_eig} that for any fixed $\epsilon>0$, we have $\lambda_1 \notin (\frac{b_N}{2\beta}(1-N^{-\epsilon})), \frac{b_N}{2\beta}(1+N^{-\epsilon}))$ with high probability. Since $\lambda_1<\frac{b_N}{2\beta}$, it implies that $\lambda_1< \frac{b_N}{2\beta}(1-N^{-\epsilon})$ with high probability. We also note from Lemma \ref{ls_eig} that $\lambda_1=\max\{|M_{ij}|\}(1+O(b_N^{-1/8}))$ and $\lambda_N=-\max\{|M_{ij}|\}(1+O(b_N^{-1/8}))$ with high probability. Thus, given $F_1$, we find that $\max\{|M_{ij}|\}<\frac{b_N}{2\beta}(1-N^{-\epsilon})$ with high probability. 

Our first result in this section is a lemma that provides an approximation for the $k$-th derivative of $G$.
\begin{lem}\label{lem:high_tay}
   Fix $\delta > 0$. For any $|\gamma-a|=O(b_N N^{-\delta})$, the $k$-th derivative of $G$ satisfies $$G^{(k)}(a)=O(b_N^{-k+1})$$
	with high probability.
\end{lem}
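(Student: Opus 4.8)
The plan is to prove the bound on $G^{(k)}(a)$ directly from the explicit formula
\[
G^{(k)}(z) = 2\beta \cdot \mathbbm{1}_{k=1} + (-1)^k (k-1)! \, \frac{b_N}{N} \sum_{i} \frac{1}{(z-\lambda_i)^k}
\]
for $k \geq 1$, so everything reduces to controlling $\frac{b_N}{N} \sum_i |z - \lambda_i|^{-k}$ uniformly over $z = a$ with $|\gamma - a| = O(b_N N^{-\delta})$. The key geometric input is that every $\lambda_i$ is well-separated from $a$: since $\gamma > \lambda_1$ is the critical point, first I would use Lemma \ref{lem:gamma} (more precisely the fact that $\gamma = \frac{b_N}{2\beta}(1 + X_N/N)$, and that in this regime $\lambda_1 \leq \frac{b_N}{2\beta}(1 - N^{-\epsilon})$ with high probability as noted just before the lemma) to conclude that $\gamma - \lambda_1 \gtrsim b_N N^{-\epsilon}$, hence $a - \lambda_1 \gtrsim b_N N^{-\epsilon}$ as well once $\delta$ is chosen larger than $\epsilon$ (or we simply take $\epsilon$ smaller than $\delta$; the point is that the perturbation $O(b_N N^{-\delta})$ is swallowed). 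Thus $|a - \lambda_i| \geq a - \lambda_1 \gtrsim b_N N^{-\epsilon}$ for every $i$.

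With this lower bound on the gaps, I would split the sum into two parts according to the size of $|\lambda_i|$, using the threshold from Lemma \ref{lem:num_eig}. For the ``large'' eigenvalues, $|\lambda_i| > b_N N^{-\epsilon'}$, Lemma \ref{lem:num_eig} says there are only $O(N^{3\epsilon'})$ of them, and each contributes at most $|a - \lambda_i|^{-k} \leq (C b_N N^{-\epsilon})^{-k} = O(b_N^{-k} N^{k\epsilon})$, so this block contributes $\frac{b_N}{N} \cdot O(N^{3\epsilon'}) \cdot O(b_N^{-k} N^{k\epsilon}) = O(b_N^{-k+1} N^{3\epsilon' + k\epsilon - 1})$, which is $O(b_N^{-k+1})$ once $\epsilon, \epsilon'$ are chosen small enough relative to $1/k$. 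For the ``small'' eigenvalues, $|\lambda_i| \leq b_N N^{-\epsilon'}$, I would note that for $k \geq 2$ we can bound $|a - \lambda_i|$ below by a constant multiple of $a \asymp b_N$ (since $a$ is close to $\gamma \asymp b_N/(2\beta)$ and $|\lambda_i|$ is much smaller), giving $|a - \lambda_i|^{-k} = O(b_N^{-k})$, so this block contributes $\frac{b_N}{N} \cdot N \cdot O(b_N^{-k}) = O(b_N^{-k+1})$; for $k = 1$ the same estimate gives $O(1) = O(b_N^0)$, matching the claimed $O(b_N^{-k+1})$, and the additive $2\beta$ term is also $O(1)$. Combining the two blocks gives $G^{(k)}(a) = O(b_N^{-k+1})$ with high probability.

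The main obstacle, and the place requiring the most care, is handling the eigenvalues of intermediate size and making sure the various small parameters ($\epsilon$ from the gap estimate, $\epsilon'$ from the counting estimate, and $\delta$ from the hypothesis) are chosen in a consistent order so that all error exponents are negative; in particular one must verify that the uniform lower bound $|a - \lambda_i| \gtrsim b_N N^{-\epsilon}$ really does hold simultaneously for all $i$ on a single high-probability event, which is where Lemma \ref{lem:gap_eig} (for the gap $\gamma - \lambda_1$), Lemma \ref{ls_eig}, and Lemma \ref{lem:num_eig} must be intersected. A secondary subtlety is that the statement is for a fixed but arbitrary point $a$ with $|\gamma - a| = O(b_N N^{-\delta})$ rather than uniformly over such $a$; since we only ever apply it at finitely many explicitly chosen points (e.g.\ $a = b_N/(2\beta)$ and $a = \gamma$ itself), a pointwise statement suffices and no covering argument is needed. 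Finally, one should remark that $\gamma$ itself satisfies $|\gamma - b_N/(2\beta)| = O(b_N/N) = O(b_N N^{-\delta})$ for $\delta \leq 1$, so the hypothesis is nonvacuous and the lemma indeed applies at the critical point.
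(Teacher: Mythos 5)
Your proposal is correct and follows essentially the same route as the paper's proof: split the sum $\frac{b_N}{N}\sum_i (a-\lambda_i)^{-k}$ according to whether $|\lambda_i|>b_N N^{-\epsilon}$ using Lemma \ref{lem:num_eig}, control the gap $a-\lambda_1\gtrsim b_N N^{-\epsilon}$ via Lemma \ref{lem:gap_eig} and $\gamma=\frac{b_N}{2\beta}(1+X_N/N)$ from Lemma \ref{lem:gamma}, and combine the two blocks. Your version is in fact slightly more careful than the paper's (explicit $2\beta$ term for $k=1$, explicit requirement $3\epsilon'+k\epsilon<1$, and the observation that the bound holds uniformly in $a$ on a single high-probability event), but the underlying argument is the same.
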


\begin{proof}
From Lemma \ref{lem:num_eig},
    \[
        \#\{ |\lambda_i|>b_N N^{-\epsilon}\}=O(N^{3\epsilon}) 
    \]
    with high probability. Moreover, if we assume that $\lambda_1<\frac{b_N}{2\beta}$, then from Lemma \ref{lem:gap_eig} we find that for any fixed $\epsilon>0$, $\lambda_1<\frac{b_N}{2\beta}(1-N^{-\epsilon})$ with high probability.
    Since $\gamma=\frac{b_N}{2\beta}(1+\frac{X_N}{N})$ from Lemma \ref{lem:gamma}, we conclude that
    \[ \begin{split}
    G^{(k)}(a) &= \frac{b_N}{N}(k-1)!\sum_{i=1}^N \frac{1}{(a-\lambda_i)^k}= \frac{b_N(k-1)!}{N}\left(O(N^{3\epsilon})\frac{N^{k\epsilon}}{b_N^k}+(N-O(N^{3\epsilon}))O({b_N^{-k}})\right)\\
		&=O(b_N^{-k+1})
    \end{split} \]
    with high probability.
\end{proof}

\begin{proof}[Proof of Proposition \ref{high_temp}]
Recall from the integral representation formula for the partition function, Lemma \ref{lem:integral representation},
\begin{align*}
    \log Z_N=\log C_N+ \frac{N}{2b_N}G(\gamma)+\log\int_{-\infty}^{+\infty} e^{\frac{N}{2b_N}(G(\gamma+ti)-G(\gamma))}\dd t.
\end{align*}
We first handle the last term of the equation above. For a sufficiently small $\epsilon>0$,
\begin{align}
    \int_{-\infty}^{+\infty}& \exp\left({\frac{N}{2b_N}(G(\gamma+ti)-G(\gamma))}\right)\dd t=\sqrt{\frac{b_N}{N}}\int_{-\infty}^{+\infty}\exp\left({\frac{N}{2b_N}\left(G\left(\gamma+i\sqrt{\frac{b_N}{N}}t\right)-G(\gamma)\right)}\right)\dd t\nonumber
    \\&=\sqrt{\frac{b_N}{N}}\int_{|t|\leq\sqrt{b_N}N^{\epsilon}}\exp\left({\frac{N}{2b_N}\left(G\left(\gamma+i\sqrt{\frac{b_N}{N}}t\right)-G(\gamma)\right)}\right)\dd t\label{eq:small_high}
    \\&~~+\sqrt{\frac{b_N}{N}}\int_{|t|>\sqrt{b_N}N^{\epsilon}}\exp\left({\frac{N}{2b_N}\left(G\left(\gamma+i\sqrt{\frac{b_N}{N}}t\right)-G(\gamma)\right)}\right)\dd t\label{eq:large_high}.
\end{align}
For \eqref{eq:small_high}, by considering the Taylor expansion of $G$ and applying Lemma \ref{lem:high_tay},
\beq
\begin{split}
    &\int_{|t|\leq\sqrt{b_N}N^{\epsilon}}\exp\left({\frac{N}{2b_N}\left(G\left(\gamma+i\sqrt{\frac{b_N}{N}}t\right)-G(\gamma)\right)}\right)\dd t
   \\&=\int_{|t|\leq\sqrt{b_N}N^{\epsilon}}\exp\left({-
    \frac{1}{4}G''(\gamma)t^2+O(\frac{N}{2b_N}\frac{1}{3!}\sup _{|a-\gamma|<b_N N^{\epsilon-1/2}}|G^{(3)}(a)|(\sqrt{b_N}t/\sqrt{N})^3}\right)\dd t
    \\&=\int_{|t|\leq\sqrt{b_N}N^{\epsilon}}\exp\left({-
    \frac{1}{4}G''(\gamma)t^2+\frac{N}{2b_N} O(b_N^{-2}(b_N N^{\epsilon-1/2})^3)}\right)\dd t
    \\&=\int_{|t|\leq\sqrt{b_N}N^{\epsilon}}\exp\left({-
    \frac{1}{4}G''(\gamma)t^2+O(N^{-\epsilon})}\right)\dd t
    \\&=\int_{|t|\leq\sqrt{b_N}N^{\epsilon}}e^{(-
    \frac{1}{4}G''(\gamma))t^2}\dd t(1+O(N^{-\epsilon})).
\end{split}
\eeq
Since $G''(\gamma)=O(b_N^{-1})$, by applying the identity
\[
    \int_{x\geq a}e^{-\lambda x^2}\dd x = \int_{0}^{\infty}e^{-\lambda(t+a)^2}\dd t=e^{-\lambda a^2}\int_{0}^{\infty} e^{-\lambda(2ta+t^2)}\dd t\leq e^{-\lambda a^2}\int_{0}^{\infty} e^{-\lambda t^2}\dd t=\sqrt{\frac{\pi}{2\lambda}}e^{-\lambda a^2},
\] 
we obtain
\begin{align*}
    \int_{-\sqrt{b_N}N^{\epsilon}}^{\sqrt{b_N}N^{\epsilon}}e^{-
    \frac{1}{4}G''(\gamma)t^2}\dd t=\int_{-\infty}^{+\infty}e^{-
    \frac{1}{4}G''(\gamma)t^2}\dd t+O(e^{-N^{\epsilon}})=2\sqrt{\frac{\pi}{G''(\gamma)}}+O(e^{-N^{\epsilon}}).
\end{align*}
To estimate \eqref{eq:large_high}, we consider
\begin{align}
    &\left|\int_{|t|>\sqrt{b_N}N^{\epsilon}}\exp\left({\frac{N}{2b_N}\left(G\left(\gamma+i\sqrt{\frac{b_N}{N}}t\right)-G(\gamma)\right)}\right)\dd t\right|\nonumber
    \\&=\left|\int_{t>\sqrt{b_N}N^{\epsilon}}2\re \exp\left({\frac{N}{2b_N}(2\beta ti \sqrt{\frac{b_N}{N}}-\frac{b_N}{N}\sum_{i=1}^N\log(1-\frac{\sqrt{\frac{b_N}{N}}ti}{\gamma-\lambda_i}))}\right)\dd t\right| \nonumber
     \\&\leq\int_{t>\sqrt{b_N}N^{\epsilon}}2\left| \exp\left({\frac{N}{2b_N}(2\beta ti \sqrt{\frac{b_N}{N}}-\frac{b_N}{N}\sum_{i=1}^N\log(1-\frac{\sqrt{\frac{b_N}{N}}ti}{\gamma-\lambda_i}))}\right)\right|\dd t \nonumber
     \\&= \int_{t>\sqrt{b_N}N^{\epsilon}}2 \exp\left({-\frac{1}{4}\sum_{i=1}^N\log(1+\frac{{\frac{b_N}{N}}t^2}{(\gamma-\lambda_i)^2})}\right)\dd t.\label{eq:check}
\end{align}
Note that there are at least $N/2$ eigenvalues less than $\frac{b_N}{10\beta}$ with high probability, since $\sum \lambda_i^2<b_N^2 N^{\epsilon}$ with high probability. Using this fact, we further have
\begin{align*}
     |\eqref{eq:check}|
     &\leq \int_{t>\sqrt{b_N}N^{\epsilon}}2 \exp\left({-\frac{N}{8} \log(1+\frac{Ct^2}{b_N N})}\right)\dd t
     \\&\leq \int_{\sqrt{b_N}N^{\epsilon}}^{b_N N}\exp\left({-\frac{b_N C' N^{2\epsilon}}{8N}}\right)\dd t+\int_{t>b_N N}2\left(\frac{Ct^2}{b_N N}\right)^{-N/8}\dd t
     \\&= O(b_N N e^{-c'N^{2\epsilon}})+\left.\frac{1}{1-N/4}t^{-N/4+1}(c^2b_N N)^{N/8}\right|_{t=b_N N}^{t=\infty}
     \\&=O(b_N N e^{-c'N^{2\epsilon}})+O(N^{-N/8})
\end{align*}
for some $N$-independent constants $C,C',c,c'$. Here, to obtain the inequality in the second line, we used $\log(1+x)\geq x/2$ for $x<1$  for the first part and $\exp(-a\log (1+b))\leq b^{-a}$ for $a,b>0$ for the second part. 

Collecting the inequalities we obtained so far, we find
\begin{align*}
    \log \int_{-\infty}^{\infty}e^{\frac{N}{2b_N}(G(\gamma+ti)-G(\gamma))}\dd t=\frac1{2}\log(b_N/N)+\frac{1}{2}\log(\frac{4\pi}{G''(\gamma)})+O(N^{-\epsilon}).
\end{align*}
On the other hand,
$$ G''(\gamma)=\frac{b_N}{N}\sum_{i=1}^N\frac{1}{(\gamma-\lambda_i)^2}$$
and
$$\sum_{i=1}^N \frac{1}{(\gamma-\lambda_i)^2}=O(N^{3\epsilon})\frac{1}{N^{-2\epsilon}b_N^2}+(N-O(N^{3\epsilon}))\frac{1+O(N^{-\epsilon})}{\gamma^2}=\frac{N}{(b_N/2\beta)^2}(1+O(N^{-\epsilon})).$$
Hence,
$$\log G''(\gamma)=\log \frac{(2\beta)^2}{b_N}+O(N^{-\epsilon}).$$
Putting these results together, we have
\begin{align}
    &\log Z_N=\log C_N+ \frac{N}{2b_N}G(\gamma)+\log\int_{-\infty}^{+\infty} e^{\frac{N}{2b_N}(G(\gamma+ti)-G(\gamma))}\dd t\nonumber
    \\&=\log C_N+ \frac{N\beta}{b_N}\gamma-\frac{1}{2}\sum_{i=1}^N \log(\gamma-\lambda_i)+\frac{1}{2}\log(b_N/N)+\frac{1}{2}\log(\frac{4\pi}{G''(\gamma)})+O(N^{-\epsilon})\nonumber
    \\&=\log C_N+\frac{N}{2}-\frac{N}{2}\log(\frac{b_N}{2\beta})+\frac{1}{2}\log(b_N/N)+\frac{1}{2}\log(\frac{4\pi}{(2\beta)^2/b_N})\label{high_first}
    \\&~~~+\frac{N\beta}{b_N}(\gamma-\frac{b_N}{2\beta})-\frac{N}{2}\log \frac{\gamma}{b_N/2\beta}+O(N^{-\epsilon})-\frac{1}{2}\sum_{i=1}^N\log(1-\frac{\lambda_i}{\gamma}).\label{high_second}
\end{align}
For the term \eqref{high_first},
\begin{align*}
    &\eqref{high_first}=\log \frac{b_N^{N/2-1}\Gamma(N/2)}{2\pi (N\beta)^{N/2-1}}+\frac{N}{2}-\frac{N}{2}\log(\frac{b_N}{2\beta})+\frac{1}{2}\log(\frac{4\pi b_N^2}{(2\beta)^2N})
    \\&~~=\log\frac{\Gamma(N/2)}{2\pi N^{N/2-1}}+\frac{N}{2}+\frac{N}{2}\log 2+\frac{1}{2} \log(\frac{\pi}{N})
    \\&~~=\log\frac{\sqrt{\frac{4\pi}{N}}(\frac{N}{2e})^{N/2}}{2\pi N^{N/2-1}}+\frac{N}{2}\log(2 e)+\frac{1}{2}\log(\pi/N)+O(N^{-1})=O(N^{-1}),
\end{align*}
where we used Stirling's formula
\begin{align*}
    &\Gamma(N/2)=\sqrt{\frac{4\pi}{N}}\left(\frac{N}{2e}\right)^{N/2}(1+O(1/N)),&& \log \Gamma(N/2)=\log \sqrt{\frac{4\pi}{N}}\left(\frac{N}{2e}\right)^{N/2}+O(1/N).
\end{align*}
We next consider the term \eqref{high_second}. The first two terms of \eqref{high_second} can be simplified to
    \begin{align*}
    &\frac{N\beta}{b_N}(\gamma-\frac{b_N}{2\beta})-\frac{N}{2}\log(1+\frac{\gamma-b_N/2\beta}{b_N/2\beta})=\frac{X_N}{2}-\frac{N}{2}\log(1+\frac{X_N}{N}),
\end{align*}
and it converges in probability to $0$, which can be checked by an elementary fact
$$\lim_{N\to \infty} \left( \frac{x}{2}-\frac{N}{2}\log (1+\frac{x}{N})\right)=0 $$ that holds for all $x>0$.
In Lemma \ref{lem:T}, we will prove that $-\frac{1}{2}\sum_{i=1}^N \log(1-\lambda_i/\gamma)$ converges in distribution to some random variable $T$. This shows that $F_N:=\log Z_N$ converges in distribution to the random variable $T$. Finally, $\lim_{N \to \infty} \P(F_1)$ can be easily checked from Proposition \ref{Largest eigenvalue}. This concludes the proof of Proposition \ref{high_temp}.
\end{proof}

In Appendix \ref{app:stat_T}, we prove several the statistical properties of $T$.

We conclude this section by proving that $T_N :=-\sum_{i=1}^N\log(1-\lambda_i/\gamma)$ converges in distribution.
\begin{lem}\label{lem:T}
    The log statistics $T_N=-\sum_{i=1}^N\log(1-\lambda_i/\gamma)$ converges in distribution to some ($N$-independent) random variable.
\end{lem}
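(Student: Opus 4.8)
The plan is to show that $T_N = -\sum_i \log(1 - \lambda_i/\gamma)$ splits into a "bulk" part that converges to a deterministic constant and an "edge" part determined by the finitely many largest eigenvalues, which converges in distribution by the heavy-tailed spectral results already established. Recall from Lemma \ref{lem:gamma} that $\gamma = \frac{b_N}{2\beta}(1 + X_N/N)$ with $X_N \Rightarrow X$, so up to negligible error we may replace $\gamma$ by $b_N/(2\beta)$ throughout; indeed $\log(1-\lambda_i/\gamma) - \log(1 - 2\beta\lambda_i/b_N)$ is $O(N^{-1})$ uniformly in the relevant range (using $|\lambda_i| \le \lambda_1 < \frac{b_N}{2\beta}(1-N^{-\epsilon})$ on $F_1$, which controls the denominators away from zero), and there are $N$ terms, but the large ones are few, so one checks the total discrepancy is $o(1)$ in probability. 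So it suffices to analyze $\widetilde T_N := -\sum_i \log\bigl(1 - \tfrac{2\beta}{b_N}\lambda_i\bigr)$.

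Next I would Taylor-expand: $-\log(1-x) = x + \frac{x^2}{2} + \frac{x^3}{3} + \cdots$. The linear term contributes $\frac{2\beta}{b_N}\sum_i \lambda_i = \frac{2\beta}{b_N}\operatorname{Tr} M$, which tends to $0$ by Lemma \ref{lem:lim_of_Tr}. The quadratic term contributes $\frac{1}{2}\bigl(\frac{2\beta}{b_N}\bigr)^2 \sum_i \lambda_i^2 = 2\beta^2 \cdot \frac{\sum_i \lambda_i^2}{b_N^2}$, which converges in distribution to $2\beta^2 X$ by Lemma \ref{lem:trace_square}. The delicate point is the higher-order terms $\sum_{k\ge 3}\frac{1}{k}\bigl(\frac{2\beta}{b_N}\bigr)^k \sum_i \lambda_i^k$: these are \emph{not} uniformly summable against the small eigenvalues in an obvious way, and this is where I expect the main obstacle. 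The resolution is to separate eigenvalues by size. Fix a threshold, say split into $\mathcal{L} = \{i : |\lambda_i| > b_N N^{-\epsilon}\}$ and its complement. By Lemma \ref{lem:num_eig}, $|\mathcal{L}| = O(N^{3\epsilon})$ with high probability, and by Lemma \ref{ls_eig} each such $\lambda_i/b_N$ is bounded; in fact $\lambda_1/b_N \Rightarrow$ a Fréchet-type limit (Proposition \ref{Largest eigenvalue}), and more generally the point process of the top rescaled eigenvalues converges (Soshnikov / Auffinger--Ben Arous--P\'ech\'e), so $-\sum_{i \in \mathcal L}\log(1 - \tfrac{2\beta}{b_N}\lambda_i)$ — a fixed continuous functional of finitely many of those points, cut off appropriately — converges in distribution. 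For $i \notin \mathcal L$, we have $|\frac{2\beta}{b_N}\lambda_i| \le 2\beta N^{-\epsilon}$, so $-\log(1-x) = x + \frac{x^2}{2} + O(x^3)$ with the cubic remainder bounded by $\sum_{i \notin \mathcal L}|\lambda_i|^3/b_N^3 \le (N^{-\epsilon}/b_N^2)\sum_i \lambda_i^2 = O(N^{-\epsilon})\cdot\frac{\sum \lambda_i^2}{b_N^2}$, which is $o(1)$ in probability by Lemma \ref{lem:sum_eig_sq_bound} (or Lemma \ref{lem:trace_square}).

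Assembling the pieces: $\widetilde T_N = \frac{2\beta}{b_N}\operatorname{Tr} M + 2\beta^2\frac{\sum_i\lambda_i^2}{b_N^2} + R_N + E_N$, where $R_N = -\sum_{i\in\mathcal L}\log(1-\tfrac{2\beta}{b_N}\lambda_i) - \tfrac{2\beta}{b_N}\sum_{i\in\mathcal L}\lambda_i - \beta^2 \cdot\tfrac{2}{b_N^2}\sum_{i\in\mathcal L}\lambda_i^2$ collects the edge contribution beyond what the first two global terms already account for (so that we do not double-count), and $E_N = o_{\mathbb P}(1)$. The first term vanishes, the second converges to $2\beta^2 X$, and $R_N$ converges jointly with $\frac{\sum\lambda_i^2}{b_N^2}$ because both are continuous functionals of the same limiting point process of extreme eigenvalues together with the independent heavy-tailed contribution governing $\sum\lambda_i^2$; invoking a joint-convergence statement (which should be extracted from the proof of Lemma \ref{lem:trace_square}) and the continuous mapping theorem yields $\widetilde T_N \Rightarrow T$ for a well-defined random variable $T$. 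The main work, and the main obstacle, is making the joint convergence of the edge statistic $R_N$ and the quadratic statistic $\sum\lambda_i^2/b_N^2$ precise — in particular verifying that truncating the sum over $\mathcal L$ at level $N^{-\epsilon}$ and letting $\epsilon \to 0$ commute with the distributional limit — and this is handled by the standard point-process tightness argument combined with the moment bounds in Lemmas \ref{lem:sum_eig_sq_bound} and \ref{lem:num_eig}.
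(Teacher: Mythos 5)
Your proposal follows essentially the same route as the paper's proof: expand $-\log(1-\lambda_i/\gamma)$, kill the linear term with Lemma \ref{lem:lim_of_Tr}, identify the quadratic term with Lemma \ref{lem:trace_square}, and control the third-and-higher-order remainder by splitting bulk from edge, with the bulk bounded by a small multiple of $\sum_i\lambda_i^2/\gamma^2$ and the edge driven by the convergence of the top eigenvalues to the top matrix entries. One caveat: your claim that the sum over $\mathcal{L}=\{|\lambda_i|>b_N N^{-\epsilon}\}$ is ``a fixed continuous functional of finitely many points'' is not right as stated, since $|\mathcal{L}|$ can grow like $N^{3\epsilon}$; the fix is exactly the intermediate-scale bound $|-\log(1-x)-x-\tfrac{x^2}{2}|\le C\delta x^2$ for $|x|\le\delta$, which is the paper's estimate $|U_N-U_N^{\epsilon}|\le 2\epsilon S_2$, combined with its monotone sandwich $U_{N,1/\epsilon}\le U_N^{\epsilon}\le U_{N,\epsilon^{-5/2}}$ on the event $F_\epsilon$ and the limit $\epsilon\to 0$ — so the interchange you defer to ``standard point-process tightness'' is carried out in the paper by this elementary sandwich rather than by point-process machinery or an explicit joint-convergence statement.
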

\begin{proof}
From Lemma \ref{ls_eig}, $$|\lambda_1|=\max_{i,j}\{|M_{ij}|\}(1+O(b_N^{-1/8})),\ \ \ |\lambda_n|=\max_{i,j}\{|M_{ij}|\}(1+O(b_N^{-1/8}))$$ with high probability, and hence $|\lambda_n| < \gamma$ and $\lambda_1<\gamma$ with high probability. Thus, with high probability, the following expansion holds:
    \begin{align*}
        -\log(1-\lambda_i/\gamma)=\sum_{k=0}^{\infty}\frac{1}{k}(\frac{\lambda_i}{\gamma})^k.
    \end{align*}
	
    Let
    \begin{align*}
        S_k= \sum_{i=1}^N(\frac{\lambda_i}{\gamma})^k,
    \end{align*}
    then Lemma \ref{lem:lim_of_Tr} implies
    \begin{align*}
        S_1=\sum_{i=1}^N \lambda_i/\gamma =Tr(M_N)/\gamma \to 0.
    \end{align*}
    Moreover, from Lemma \ref{lem:trace_square}
    \begin{align*}
        S_2=\sum_{i=1}^N (\frac{\lambda_i}{\gamma})^2= \frac{\sum \lambda_i^2}{b_N^2}\frac{b_N^2}{\gamma^2} \Rightarrow X
    \end{align*}
    for some non-degenerate random variable $X$. Note that, for $k\geq 3$,
    \begin{align*}
        |\sum_{i=1}^N(\frac{\lambda_i}{\gamma})^k|\leq |\frac{\Gamma}{\gamma}|^{k-2}S_2/\gamma^2 
    \end{align*}
    where we let $\Gamma=\max\{|\lambda_n|,\lambda_1\}$. Thus,
    \begin{align*}
        |T_N|\leq \sum_{k=3}^{\infty} \frac{1}{k}|\frac{\Gamma}{\gamma}|^{k-2}S_2\leq -(\frac{\gamma}{\Gamma})^2{S_2}\log(1-\Gamma/\gamma) =: L_N.
    \end{align*}
    The random variable $L_N$ defined above converges to a non-degenerate random variable, since $S_2$ and ${\Gamma}/{\gamma}$ converge in distribution to some random variables.  

We now rewrite $T_N$ as
\begin{align*}
T_N = \sum_{i=1}^N -\log \left(1 - \frac{\lambda_i}{\gamma}\right) - \frac{\lambda_i}{\gamma} - \frac{\lambda_i^2}{2\gamma^2} + S_1+ \frac{1}{2} S_{2}
\end{align*}
%Since $S_1\rightarrow 0$ and $S_2\Rightarrow X$, it is enough to show that other reaming part converges.
and define the random variables
\begin{align*}
U_N &= \sum_{i=1}^N -\log \left(1 - \frac{\lambda_i}{\gamma}\right) - \frac{\lambda_i}{\gamma} - \frac{\lambda_i^2}{2\gamma^2}, \\
U_N^{\epsilon} &= \sum_{| \lambda_i | > \epsilon \gamma} -\log \left(1 - \frac{\lambda_i}{\gamma}\right) - \frac{\lambda_i}{\gamma} - \frac{\lambda_i^2}{2\gamma^2} ,
\end{align*}
\[
U_{N,k} = \sum_{i=1}^k \left(-\log \left(1 - \frac{\lambda_i}{\gamma}\right) - \log \left(1 - \frac{\lambda_{N+1-i}}{\gamma}\right) - \frac{\lambda_i + \lambda_{N+1-i}}{\gamma} - \frac{\lambda_i^2 + \lambda_{N+1-i}^2}{2\gamma^2}\right).
\]
First, we notice that the difference between $U_N$ and $U_N^{\epsilon}$ is bounded by
\begin{align*}
| U_N - U_N^{\epsilon} | &= \left| \sum_{| \lambda_i | < \epsilon \gamma} -\log \left(1 - \frac{\lambda_i}{\gamma}\right) - \frac{\lambda_i}{\gamma} - \frac{\lambda_i^2}{2\gamma^2} \right| \\
&\leq \sum_{| \lambda_i | < \epsilon \gamma} 2 \left| \frac{\lambda_i^2}{\gamma^3} \right| < 2\epsilon \sum \frac{\lambda_i^2}{\gamma^2} = 2\epsilon S_2. 
\end{align*}
By definition, $U_{N, i} \leq U_{N, j}$ for $i \leq j$. Further, for any fixed $k$, if we let $M_k$ be the $k$-th large entry of $\{ |M_{ij}| : 1\leq i \leq j \leq N \}$, then with high probability, $\lambda_k = M_k (1+o(1))$ and $\lambda_{N+1-k} = -M_k (1+o(1))$. (See, e.g., \cite{soshnikov2004poisson}.) We then find that $U_{N, k}$ converges in distribution to some random variable $V_k$.
From the monotonicity of $U_{N, k}$, we find that $V_1 \leq V_2 \leq V_3 \leq \ldots$. Thus, since $U_{N,k} \leq L_N$, from the convergence of $L_N$ we have
\[
\lim_{N \to \infty} V_N = V_{\infty} 
\]
for some random variable $V_{\infty}$.

Let $F_{\epsilon} = \left\{\epsilon < S_2 < \frac{1}{\sqrt{\epsilon}} \right\}$, then
\[
U_{N, 1 / \epsilon} \leq\left( U_N^{\epsilon} | F_{\epsilon}\right) \leq U_{N,\epsilon^{-5 / 2}} \text{ and }|U_N | F_{\epsilon} - U_N^{\epsilon} | F_{\epsilon}|\leq 2 \epsilon S_2 = 2 \sqrt{\epsilon},
\]
which also implies
\[
U_{N, 1 / \epsilon} - 2 \sqrt{\epsilon} \leq U_N | F_{\epsilon} \leq U_{N,\epsilon^{-5 / 2}} + 2 \sqrt{\epsilon}.
\]
Thus, we can obtain 
\[
\P(U_{N, 1 / \epsilon} \geq U + 2 \sqrt{\epsilon}) \leq \P(U_N | F_{\epsilon} \geq U) \leq \P(U_{N,\epsilon^{-5 / 2}}\geq U - 2 \sqrt{\epsilon})
\]
and this implies
\[
\lim_{\epsilon\to 0} \lim_{N \to \infty} \P(U_{N, 1 / \epsilon} \geq U + 2 \sqrt{\epsilon}) = \lim_{\epsilon \to 0} \P(V_{1 / \epsilon} \geq U + 2 \sqrt{\epsilon}) = \P(V_{\infty} \geq U)
\]
\[
\lim_{\epsilon \to 0} \lim_{N \to \infty} \P(U_{N,\epsilon^{-5/2}} \geq U-2\sqrt{\epsilon}) = \lim_{\epsilon \to 0} \P(V_{ \epsilon^{-5/2}} \geq U - 2 \sqrt{\epsilon}) = \P(V_{\infty} \geq U).
\]
Hence, we have
\[
 \P(V_{\infty}\geq U)=\lim_{\epsilon\to 0} \P(V|F_{\epsilon}\geq U) = \lim_{\epsilon \to 0} \lim_{N \to \infty} P(U_N | F_{\epsilon} \geq U) = \P(V_{\infty} \geq U)
\]
We thus conclude that
\[
U_N \Rightarrow V_{\infty}.
\]
This proves that $T_N$ converges to a non-degenerate random variable.
\end{proof}

\section{Low temperature regime} \label{sec:low}
In this section, we assume that the event $F_2=\left\{\lambda_1 > \frac{b_N}{2\beta}\right\}$ holds. Our goal in this section is to prove the following proposition:
\begin{prop}\label{low_temp}
Suppose that assumptions in Theorem \ref{main} hold. Then, given the event $F_2=\left\{\lambda_1 > \frac{b_N}{2\beta}\right\}$, 
    \[
        (\frac{1}{N}\log Z_N)|F_2\Rightarrow \beta X-\frac{1}{2}\log(2e\beta X)
    \] 
    where the random variable $X$ satisfies $X>\frac{1}{2\beta}$ and its distribution is given by
    \[
	\P(X>u)=\frac{1-\exp(-u^{-\alpha})}{1-\exp(-(2\beta)^{\alpha})}.
\]
    Moreover, $\lim_{N \to \infty} \P(F_2) = 1- \exp(-(2\beta)^{\alpha})$.
\end{prop}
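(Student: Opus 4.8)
The plan is to run the same steepest-descent analysis as in the high-temperature case, now exploiting the pinning of the critical point $\gamma$ to the top eigenvalue. Throughout I would work on the intersection $\Omega_N$ of the high-probability events of Lemmas~\ref{ls_eig}, \ref{lem:gap_eig}, \ref{lem:num_eig}, \ref{lem:sum_eig_sq_bound}, \ref{lem:trace_square}, \ref{lem:lim_of_Tr} and \ref{lem:gamma}; since $\P(\Omega_N)\to 1$ while $\P(F_2)$ stays bounded away from $0$, it suffices to prove the claimed convergence on $\Omega_N\cap F_2$. On this event Lemma~\ref{lem:gamma} gives $\gamma-\lambda_1=\frac{1}{2\beta-b_N/\lambda_1}\frac{b_N}{N}+O(b_N N^{-1-\epsilon})$, so $\gamma-\lambda_1\asymp b_N/N$ up to $N^\epsilon$ factors and, crucially, $\gamma/b_N=\lambda_1/b_N+O(N^{\epsilon-1})=\lambda_1/b_N+o(1)$. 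Starting from Lemma~\ref{lem:integral representation},
\[
\frac1N\log Z_N=\frac1N\log C_N+\frac{1}{2b_N}G(\gamma)+\frac1N\log\int_{-\infty}^{\infty}e^{\frac{N}{2b_N}(G(\gamma+ti)-G(\gamma))}\,\dd t,
\]
I would estimate the three terms and show the sum equals $\beta\frac{\gamma}{b_N}-\frac12\log(2e\beta\frac{\gamma}{b_N})+o(1)$.

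For the prefactor, Stirling's formula gives $\frac1N\log C_N=\frac12\log\frac{b_N}{2e\beta}+O(N^{-1}\log N)$, exactly as in the derivation of \eqref{high_first}. For $\frac{1}{2b_N}G(\gamma)=\beta\frac{\gamma}{b_N}-\frac1{2N}\sum_i\log(\gamma-\lambda_i)$, I would peel off the $i=1$ term, equal to $-\frac1{2N}\log(\gamma-\lambda_1)=O(N^{-1}\log N)$ since $\gamma-\lambda_1\asymp b_N/N$ and $\log b_N=O(\log N)$, and for $i\ge 2$ write $\log(\gamma-\lambda_i)=\log\gamma+\log(1-\lambda_i/\gamma)$, so $\frac{N-1}{2N}\log\gamma=\frac12\log\gamma+O(N^{-1}\log N)$. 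The remaining sum $\frac1{2N}\sum_{i\ge 2}\log(1-\lambda_i/\gamma)$ is $o(1)$: by Lemma~\ref{lem:num_eig} there are $O(N^{3\epsilon})$ indices with $|\lambda_i|>b_N N^{-\epsilon}$, and using $\gamma\ge\lambda_1\gtrsim b_N$ together with the gap $\lambda_1-\lambda_2>b_N N^{-\epsilon}$ (Lemma~\ref{lem:gap_eig}) each of these contributes $O(\log N)$; for the other indices $\log(1-\lambda_i/\gamma)=-\lambda_i/\gamma+O(\lambda_i^2/\gamma^2)$, and the resulting sums are controlled by $\sum_i\lambda_i=o(b_N)$ (Lemma~\ref{lem:lim_of_Tr}) and $\sum_i\lambda_i^2<b_N^2N^\epsilon$ (Lemma~\ref{lem:sum_eig_sq_bound}). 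Collecting these, $\frac1N\log C_N+\frac{1}{2b_N}G(\gamma)=\beta\frac{\gamma}{b_N}-\frac12\log\big(2e\beta\tfrac{\gamma}{b_N}\big)+o(1)$.

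For the integral term it suffices to show $\frac1N\log\int_{-\infty}^{\infty}e^{\frac{N}{2b_N}(G(\gamma+ti)-G(\gamma))}\,\dd t=O(N^{-1}\log N)$. Note that $\re\frac{N}{2b_N}(G(\gamma+ti)-G(\gamma))=-\frac14\sum_i\log(1+t^2/(\gamma-\lambda_i)^2)$, which does not involve $\beta t$. For the lower bound I would restrict the integral to $|t|\le b_N N^{-c}$ for a suitable constant $c>0$; there the real part of the exponent is bounded below and (after checking the phase stays $O(1)$) the real part of the integrand stays bounded below, so $\int\gtrsim b_N N^{-c}$. For the upper bound, as in \eqref{eq:check}, the modulus of the integral is at most $\int e^{-\frac14\sum_i\log(1+t^2/(\gamma-\lambda_i)^2)}\,\dd t$; keeping only the indices with $|\lambda_i|\le\gamma/2$ --- there are at least $N/2$ of them, since $\sum_i\lambda_i^2<b_N^2N^\epsilon$ and $\gamma\ge\lambda_1\ge\frac{b_N}{2\beta}$ --- bounds this by $\int(1+4t^2/(9\gamma^2))^{-N/8}\,\dd t=O(\gamma)=O(b_N N^\epsilon)$. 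Both bounds are $\exp(O(\log N))$, so after dividing by $N$ this term vanishes, and on $\Omega_N\cap F_2$ we get $\frac1N\log Z_N=\beta\frac{\gamma}{b_N}-\frac12\log(2e\beta\frac{\gamma}{b_N})+o(1)$.

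To conclude, I would replace $\gamma/b_N$ by $\lambda_1/b_N$ at the cost of $o(1)$. By Proposition~\ref{Largest eigenvalue}, $\P(F_2)=\P(\lambda_1/b_N>1/(2\beta))\to 1-\exp(-(2\beta)^\alpha)$, and for $u>1/(2\beta)$, $\P(\lambda_1/b_N>u\mid F_2)\to\frac{1-\exp(-u^{-\alpha})}{1-\exp(-(2\beta)^\alpha)}$, so conditionally on $F_2$ one has $\lambda_1/b_N\Rightarrow X$ with $X>1/(2\beta)$ and the stated law. Since $y\mapsto\beta y-\frac12\log(2e\beta y)$ is continuous on $(1/(2\beta),\infty)$, the continuous mapping theorem together with Slutsky's theorem (the $o(1)$ remainder still tends to $0$ in probability conditionally on $F_2$, as $\P(F_2)$ is bounded below) yields $(\frac1N\log Z_N)\mid F_2\Rightarrow\beta X-\frac12\log(2e\beta X)$. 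I expect the main obstacle to be the control of the integral factor: the logarithmic singularity of $G$ at $\lambda_1$ lies at distance only $\asymp b_N/N$ from the integration contour, and showing that it contributes nothing beyond $O(N^{-1}\log N)$ to $\frac1N\log Z_N$ requires the matching lower and upper bounds above, the upper bound crucially using the bulk of the spectrum to dominate the slowly decaying ($\sim|t|^{-1/2}$) tail of the integrand.
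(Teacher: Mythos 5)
Your overall architecture matches the paper's: the integral representation, Stirling's formula for $\log C_N$, the reduction of $-\frac{1}{2N}\sum_i\log(\gamma-\lambda_i)+\frac12\log b_N$ to $-\frac12\log(\lambda_1/b_N)+o(1)$ via Lemmas \ref{lem:gamma}, \ref{lem:gap_eig}, \ref{lem:num_eig}, \ref{lem:sum_eig_sq_bound}, \ref{lem:lim_of_Tr}, the modulus upper bound on the contour integral, and the conditional Fr\'echet limit of $\lambda_1/b_N$ combined with the continuous mapping theorem. The gap is exactly at the point you yourself flag as the main obstacle: the lower bound on $K=\int_{-\infty}^{\infty}e^{\frac{N}{2b_N}(G(\gamma+ti)-G(\gamma))}\,\dd t$. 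Your plan --- restrict to $|t|\le b_N N^{-c}$, claim the phase stays $O(1)$, and bound the real part of the integrand below --- does not work. Since $G'(\gamma)=0$, the phase equals $\frac12\sum_i\bigl[\frac{t}{\gamma-\lambda_i}-\arctan\frac{t}{\gamma-\lambda_i}\bigr]$, and the $i=1$ term alone is of order $t/(\gamma-\lambda_1)\asymp N^{1-c}$ (up to $N^{\epsilon}$ factors), because $\gamma-\lambda_1\asymp b_N/N$ by Lemma \ref{lem:gamma}. Hence on $|t|\le b_N N^{-c}$ with any fixed $c<1$ the integrand oscillates rapidly and its real part is certainly not bounded below.

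Shrinking the window to $|t|\le c_0(\gamma-\lambda_1)$ does make the phase $O(1)$ and yields a centre contribution $\gtrsim b_N/N$, but then you must exclude cancellation from the complementary range, and the only estimate your sketch provides there is the modulus bound of the form $\bigl(1+\frac{t^2}{(\gamma-\lambda_1)^2}\bigr)^{-1/4}e^{-cNt^2/\gamma^2}$, whose integral over $|t|>c_0(\gamma-\lambda_1)$ is of order $b_N N^{-3/4}$, i.e.\ far larger than the centre term $b_N/N$. So a split into ``bounded-phase centre plus modulus-bounded tail'' cannot certify $K\ge e^{-o(N)}$: as far as these bounds know, the oscillatory tail could cancel the centre. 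This is precisely why the paper deforms the vertical contour to the steepest-descent curve $\{\im G=0\}$ through $\gamma$ (justified by an arc estimate at infinity), derives the local shape $x=-\frac16\frac{G'''(\gamma)}{G''(\gamma)}y^2(1+o(1))$ of $\Gamma^+$ from the two-sided bounds on $G^{(k)}(\gamma)$ in Lemma \ref{low} (a lemma your argument never establishes), and then applies the monotonicity statement of Lemma \ref{great0} to extract a polynomially small but sufficient lower bound from the now real, monotone integrand. To complete your proof you would either have to import that deformation argument (together with Lemma \ref{low}) or supply a genuinely different mechanism --- e.g.\ a stationary-phase/integration-by-parts estimate showing the tail of the oscillatory integral is $o(b_N/N)$, or a direct spherical-cap lower bound on $Z_N$ reproducing the constant $-\frac12\log(2e\beta\lambda_1/b_N)$. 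As written, the claim $\frac1N\log K\to0$ is not proved, and the rest of the argument, which is otherwise correct, does not go through without it.
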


We begin by proving the asymptotic behavior of the Taylor coefficients of the function $G$ defined in \eqref{eq:G}.
\begin{lem}\label{low}
Recall the definition of $\gamma$ in Lemma \ref{lem:gamma}. With high probability, there exist ($N$-independent) coefficients $C_1$ and $C_2$ such that the Taylor coefficients $G^{(k)}(\gamma)$ of $G$ at $\gamma$ satisfies
\[
		C_1 N^{k-1}b_N^{-(k-1)}N^{-k\epsilon} \leq G^{(k)}(\gamma) \leq C_2 N^{k-1}b_N^{-(k-1)}.
\]
\end{lem}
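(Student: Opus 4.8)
The plan is to analyze $G^{(k)}(\gamma) = \frac{b_N}{N}(k-1)!\sum_{i=1}^N (\gamma-\lambda_i)^{-k}$ by splitting the sum over eigenvalues into the ``large'' ones (those comparable to $\lambda_1$) and the ``bulk'' ones. We are in the low temperature regime $\lambda_1 > b_N/(2\beta)$, and by Lemma \ref{lem:gamma} we know $\gamma = \lambda_1 + \frac{1}{2\beta - b_N/\lambda_1}\frac{b_N}{N} + O(b_N N^{-1-\epsilon})$ with high probability, so in particular $\gamma - \lambda_1 = \Theta(b_N/N)$ up to the $N^{-\epsilon}$ corrections coming from the gap estimates. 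The dominant contribution to the sum should come from the single term $i=1$, which is of size $(\gamma-\lambda_1)^{-k} = \Theta(N^k b_N^{-k})$, yielding $G^{(k)}(\gamma) = \Theta(N^{k-1}b_N^{-(k-1)})$; the remaining terms must be shown to be of lower or comparable order, which is where the $N^{-k\epsilon}$ slack and the $C_1,C_2$ constants absorb everything.

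The key steps, in order, are: (1) Use Lemma \ref{lem:gap_eig} to get $\lambda_1 - \lambda_2 > b_N N^{-\epsilon}$ and $\lambda_1 > b_N N^{-\epsilon}$ with high probability, and use Lemma \ref{lem:gamma} to pin down $\gamma - \lambda_1$. (2) For the upper bound on $G^{(k)}(\gamma)$: the $i=1$ term contributes at most $(\gamma - \lambda_1)^{-k} \leq C N^k b_N^{-k}$ (using $\gamma - \lambda_1 \geq \frac{1}{2\beta-b_N/\lambda_1}\frac{b_N}{N}(1+o(1))$ and that $2\beta - b_N/\lambda_1$ is bounded below away from $0$ once $\lambda_1$ is bounded away from $b_N/(2\beta)$, again by Lemma \ref{lem:gap_eig}); for $i \geq 2$ with $|\lambda_i| > b_N N^{-\epsilon}$, we have $\gamma - \lambda_i \geq \lambda_1 - \lambda_2 > b_N N^{-\epsilon}$, and there are only $O(N^{3\epsilon})$ such indices by Lemma \ref{lem:num_eig}, contributing $O(N^{3\epsilon}\cdot N^{k\epsilon}b_N^{-k})$; for the remaining $i$ with $|\lambda_i| \leq b_N N^{-\epsilon}$ we have $\gamma - \lambda_i \geq \gamma - b_N N^{-\epsilon} = \Theta(b_N)$ (since $\gamma \approx \lambda_1 > b_N N^{-\epsilon}$, and in fact $\gamma = \Theta(b_N)$ from $\lambda_1 = \Theta(\max|M_{ij}|)$ and Lemma \ref{lem:a_N}), contributing $O(N \cdot b_N^{-k})$. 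Multiplying by $\frac{b_N}{N}(k-1)!$ gives $G^{(k)}(\gamma) \leq C_2 N^{k-1}b_N^{-(k-1)}$ after checking that the $i=1$ term dominates the others (which it does: $N^k$ beats $N^{3\epsilon+k\epsilon}$ and $N$ for small $\epsilon$). (3) For the lower bound: simply keep the single positive term $i=1$, $G^{(k)}(\gamma) \geq \frac{b_N}{N}(k-1)!(\gamma-\lambda_1)^{-k}$, and bound $\gamma - \lambda_1$ from above. From Lemma \ref{lem:gamma}, $\gamma - \lambda_1 \leq \frac{C'}{1} \frac{b_N}{N}$ for a suitable constant provided $2\beta - b_N/\lambda_1$ is bounded below; here one uses $\lambda_1 \notin (\frac{b_N}{2\beta}(1-N^{-\epsilon}), \frac{b_N}{2\beta}(1+N^{-\epsilon}))$ from Lemma \ref{lem:gap_eig}, so on $F_2$ we get $\lambda_1 > \frac{b_N}{2\beta}(1+N^{-\epsilon})$, hence $2\beta - b_N/\lambda_1 > 2\beta N^{-\epsilon}/(1+N^{-\epsilon}) \geq \beta N^{-\epsilon}$ for large $N$. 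This produces $\gamma - \lambda_1 = O(b_N N^{\epsilon - 1})$, so $(\gamma - \lambda_1)^{-k} \geq C N^{k - k\epsilon}b_N^{-k}$, and multiplying by $\frac{b_N}{N}$ gives the claimed lower bound $C_1 N^{k-1}b_N^{-(k-1)}N^{-k\epsilon}$.

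The main obstacle I anticipate is controlling $\gamma - \lambda_1$ uniformly in $k$ with the correct power of $N^{\epsilon}$: Lemma \ref{lem:gamma} gives the two-term expansion of $\gamma$ but the coefficient $1/(2\beta - b_N/\lambda_1)$ can be as large as $N^{\epsilon}$ on $F_2$ (when $\lambda_1$ is only barely above $b_N/(2\beta)(1+N^{-\epsilon})$), which is exactly why the $N^{-k\epsilon}$ factor must appear in the lower bound — one should double-check that $(\gamma - \lambda_1)^{-1}$ never gets \emph{larger} than order $N^{1+\epsilon}b_N^{-1}$ and never \emph{smaller} than order $N b_N^{-1}$, and that raising to the $k$-th power turns the single $N^{\epsilon}$ into $N^{k\epsilon}$ as stated. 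A secondary point requiring care is that $C_1, C_2$ must be genuinely $N$-independent but may depend on $k$ through the $(k-1)!$ prefactor and through $\beta$; since the lemma's statement allows $k$-dependence implicitly (the constants are not claimed uniform in $k$), this is not an issue, but it should be noted. Everything else is routine: splitting the eigenvalue sum into three ranges and applying Lemmas \ref{lem:num_eig}, \ref{lem:gap_eig}, \ref{lem:a_N}, and \ref{ls_eig}.
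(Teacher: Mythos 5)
Your proposal is correct and follows essentially the same route as the paper: the lower bound comes from the single term $i=1$ with $\gamma-\lambda_1=O(b_N N^{\epsilon-1})$ (via $\lambda_1>\frac{b_N}{2\beta}(1+N^{-\epsilon})$ from Lemma \ref{lem:gap_eig}), and the upper bound from $\gamma-\lambda_1\geq c\,b_N/N$ together with the gap $\lambda_1-\lambda_2>b_N N^{-\epsilon}$ for the remaining eigenvalues, exactly as in the paper (your finer three-way split of the bulk is a harmless refinement). Two cosmetic points: for the upper bound on the $i=1$ term you only need $2\beta-b_N/\lambda_1\leq 2\beta$, not a lower bound on it, and the claim $\gamma=\Theta(b_N)$ is unnecessary (and not true with high probability with a fixed constant) — only $\gamma>\lambda_1>b_N/(2\beta)$ is used.
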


\begin{proof}
Recall we proved in Lemma \ref{lem:gamma} that
\[
	\gamma-\lambda_1= \frac{1}{2\beta-b_N/\lambda_1}\frac{b_N}{N}+O(b_N N^{-1-\epsilon}).
\]
We invoke Lemma \ref{lem:gap_eig} as in the proof of Lemma \ref{lem:high_tay} to find that $\lambda_1>\frac{b_N}{2\beta }(1+N^{-\epsilon})$ with high probability. Thus, we have
\[
    \gamma-\lambda_1= O(b_N N^{-1+\epsilon}).
\] 
Further, since $2\beta-b_N/\lambda_1\leq 2\beta$, we have
\[
    \gamma-\lambda_1 \geq \frac{b_N}{2\beta N}+O(\frac{b_N}{N}N^{-\epsilon}) \geq \frac{b_N}{4\beta N}.
\]

From the definition of $G^{(k)}(\gamma)$, we find an upper bound for $G^{(k)}(\gamma)$,
    \begin{align*}
        G^{(k)}(\gamma)= \frac{b_N}{N}\sum_i \frac{(k-1)!}{(\gamma-\lambda_i)^k}\geq \frac{b_N}{N}\frac{1}{(\gamma-\lambda_1)^k} = \Theta(N^{k-1}b_N^{-k+1}N^{-k\epsilon}).
    \end{align*}
For the lower bound for $G^{(k)}(\gamma)$, since $\lambda_1-\lambda_2> b_NN^{-\epsilon}$ with high probability due to Lemma \ref{lem:gap_eig},
    \begin{align*}
        \begin{split}
             G^{(k)}(\gamma)&= \frac{b_N}{N}\sum_i (k-1)!\frac{1}{(\gamma-\lambda_i)^k}\\
             &= O\left(\frac{b_N}{N}\frac{1}{(\gamma-\lambda_1)^k}\right)+O\left(N\frac{b_N}{N}b_N^{-k}N^{k\epsilon}\right)=O(N^{k-1}b_N^{-k+1}).
        \end{split}
    \end{align*}
This completes the proof of the desired lemma.
\end{proof}

In order to prove Proposition \ref{low_temp}, as in the high-temperature regime, we consider the integration representation formula,
\begin{align*}
    \log Z_N=\log C_N+ \frac{N}{2b_N}G(\gamma)+\log\int_{-\infty}^{+\infty} e^{\frac{N}{2b_N}(G(\gamma+ti)-G(\gamma))}\dd t.
\end{align*}
The key part of the proof is to show that
\[
	\lim_{N\to \infty}\frac{1}{N}\log\int_{-\infty}^{+\infty} e^{\frac{N}{2b_N}(G(\gamma+ti)-G(\gamma))}\dd t=0.
\]
To show this, we will prove the following claim:
\[
	C_1 N^{c_1}\leq\int_{-\infty}^{+\infty} e^{\frac{N}{2b_N}(G(\gamma+ti)-G(\gamma))}\dd t\leq C_2 N^{c_2}
\]
for some $C_1,C_2>0$ and $c_1,c_2\in \R$. For the upper bound in the claim, we notice
  \begin{align*}
    ~~&\left|\int_{-\infty}^{+\infty} e^{\frac{N}{2b_N}(G(\gamma+ti)-G(\gamma))}\dd t\right|=\left |\int_{0}^{+\infty} 2\re e^{\frac{N}{2b_N}(G(\gamma+ti)-G(\gamma))}\dd t\right|
    \\&\leq \int_{0}^{+\infty} 2 \left|e^{\frac{N}{2b_N}(2\beta ti)-\frac{1}{2}\sum\log(1+\frac{ti}{\gamma-\lambda_i})}\right|\dd t
    = \int_{0}^{+\infty} 2e^{-\frac{1}{4}\sum\log(1+\frac{t^2}{(\gamma-\lambda_i)^2})}\dd t
    \\&\leq \int_{0}^{b_N^4}1\dd t+\int_{b_N^4}^{\infty}\left(\frac{t^2}{\lambda_1^2}\right)^{-N/8}\dd t=O(b_N^4),
\end{align*}
where we used the Taylor expansion of $G(\gamma+t)$ with Lemma \ref{low} as
\begin{align*}\label{taylor}
    G(\gamma+t)=\sum \frac{1}{k!}G^{(k)}(\gamma)t^k= \sum O(b_N^{-(k-1)}N^{(k-1)})t^k=G(\gamma)+\frac{b_N}{N}\sum_{k\geq 2} O((\frac{N}{b_N}t)^k),
 \end{align*}
which converges for $|t|<\frac{b_N}{N}$.

To prove the lower bound of the claim, we will deform the integral by considering the following curve: let $\Gamma$ be the curve that starts from $\gamma$ and ends at $-\infty$ such that for any point $t \in \Gamma$, we have $\im(G(t))=0$. (Implicit function theorem ensures the existence of such a curve.) By the Cauchy-Riemann equation, $\re(G(z))$ is monotone along the curve. Since $G(z)$ decreases near $\gamma$ as imaginary part increases from its Taylor expansion, $G(z)=\re(G(z))$ decays along the curve. We further decompose $\Gamma$ by $\Gamma = \Gamma^+ \cup \Gamma^-$, where $\Gamma^{+} := \Gamma\cap \C^+$ and $\Gamma^{-} :=\Gamma\cap \C^-$. Note that both $\Gamma^+$ and $\Gamma^-$ start from $\gamma$ and end at $-\infty$. Let $C_R$ be an arc starting from $\{|z|=R\}\cap \Gamma^+$ to $\{|z|=R\}\cap \{\re(z)= \gamma\}$. Then for any sufficiently large $R$ with $R>2\lambda_1$, we have
$$\re(G(z))=\re(2\beta z -\frac{b_N}{N}\sum_{i}\log(z-\lambda_i))\leq 2\beta \gamma -b_N\log(R/2).$$
Then, the contour integral along the curve $C_R$ satisfies 
$$
    |\int_{C_R} e^{\frac{N}{2b_N}G(z)}\dd z|\leq \frac{e^{N\beta\gamma/b_N}\pi R}{(R/2)^{N/2}}.
$$
In particular, as $R \to \infty$ the integral above converges to $0$. This enables us to change the integral along $\re(z)=\gamma$ to the integral along the curve $\Gamma$, i.e.,
$$
    \int_{\gamma-i\infty}^{\gamma+i\infty} e^{\frac{N}{2b_N}G(z)}\dd z =\int_{\Gamma}e^{\frac{N}{2b_N}G(z)}\dd z=\int_{\Gamma^+}e^{\frac{N}{2b_N}G(z)}\dd z+\int_{\Gamma^-}e^{\frac{N}{2b_N}G(z)}\dd z=2\int_{\Gamma^+}e^{\frac{N}{2b_N}G(z)}\dd z,
$$
where we also used the fact $\overline{G(\bar{z})}=G(z)$.

Let 
$$
    K=-i \int_{\gamma-i\infty}^{\gamma+i\infty} \exp(\frac{N}{2b_N}(G(z)-G(\gamma)))\dd z=-2i\int_{\Gamma^+}\exp(\frac{N}{2b_N}[G(z)-G(\gamma)])\dd z,
$$
Then, by letting $z = x+ iy$,
$$
    K=-2i\int_{\Gamma^+}\exp(\frac{N}{2b_N}[G(z)-G(\gamma)])\dd x+2\int_{\Gamma^+}\exp(\frac{N}{2b_N}[G(z)-G(\gamma)])\dd y.
$$
To obtain a lower bound for $K$, we will prove the lower bound for the second term in the right side of the equation above. To estimate this term, we need to prove some properties of the contour $\Gamma^+$. We notice that the Taylor expansion
$$
    G(\gamma+z)= G(\gamma)+ \sum_{k\geq 2}\frac{1}{k!}G^{(k)}(\gamma)z^k
$$
holds for $|z|<\frac{b_N}{N}$. Thus, for $z = x+ iy \in \Gamma^+$,
$$
    0=\im G(\gamma+x+iy)= xy G''(\gamma)-\frac{1}{6}G^{'''}(\gamma)y^3+\frac{1}{2}G^{'''}(\gamma)x^2y+\sum_{k\geq 4}\frac{1}{k!}G^{(k)}(\gamma)\im(x+iy)^k
$$
and
$$
    |\im (x+iy)^k|\leq k|y||x+iy|^{k-1}.
$$
We then find for $|z|\leq \frac{1}{N^2}$ that
\begin{align*}
    \Omega(x,y)&=\frac{1}{G^{''}(\gamma) y}\sum_{k\geq 4}\frac{1}{k!}G^{(k)}(\gamma)\im(x+iy)^k\leq \sum_{k\geq 4}\frac{1}{(k-1)!}\frac{G^{(k)}(\gamma)}{G^{''}(\gamma)}|x+iy|^{k-1}\\&\leq O\left(\frac{N^{2+2\epsilon}}{b_N^{2}}\right)(x^2+y^2)|z|\leq O(\frac{N^{2\epsilon}}{b_N^2})(x^2+y^2).
\end{align*}
Set $a=-\frac{G'''(\gamma)}{G''(\gamma)}>0$. From Lemma \ref{low}, we find $a=O(\frac{N}{b_N}N^{2\epsilon})$ and then
\begin{align*}
    0&=x+\frac{1}{6}a y^2-\frac{1}{2}a x^2+O(\frac{N^{2\epsilon}}{b_N^2})(x^2+y^2)
    \\&= x+\frac{1}{6}a y^2(1+o(1))-\frac{1}{2}a x^2(1+o(1))=x(1+o(1))+\frac{1}{6}a y^2 (1+o(1)).
\end{align*}
Thus, 
\begin{align*}
    x=-\frac{1}{6}a y^2(1+o(1))\text{ for } |x+iy|\leq \frac{1}{N^2}
\end{align*}
and $y$ increases along contour $\Gamma^+$ starting from $0$ in $|z|<\frac{1}{N^2}$. Moreover, since 
\[
	|z|^2= x^2+y^2= y^2 + O(\frac{N^{2+4\epsilon}}{b_N^2} y^4),
\]
with high probability, we have
$$
    y=\Theta(|z|),x= \Theta\left(\frac{G'''(\gamma)}{G''(\gamma)}|z|^2\right) \text{ for } |z|<\frac{1}{N^2}.
$$

We now go back to the lower bound for
$$
    2\int_{\Gamma^+}\exp(\frac{N}{2b_N}[G(z)-G(\gamma)])\dd y.
$$
We decompose $\Gamma^+$ to $\Gamma_1 \cup \Gamma_2 \cup \Gamma_3$, where we let
$$\Gamma_1=\{|z|\leq N^{-3}\}\cap \Gamma^+ , \Gamma _2=\{N^{-3}\leq |z|\leq N^{-2}\}\cap \Gamma^+,\Gamma_3=\{|z|\geq N^{-2}\}\cap \Gamma^+$$
$$z_3= \{|z|=N^{-3}\}\cap \Gamma^+ ,z_2=\{|z|=N^{-2}\}\cap \Gamma^+.$$
In Lemma \ref{great0}, we will show that for any real valued function $f(z)$ on $\Gamma^+$ that decreases along the curve as $z$ moves from $\gamma$ to $-\infty$, $\int_{\Gamma^+} e^{f(z)} \dd y\geq 0$. With this lemma, if we let
\begin{align*}
    g(z)=\begin{cases}
        \frac{N}{2b_N}[G(z)-G(\gamma)], & |z|>N^{-2},
        \\ \frac{N}{2b_N}[G(z_2)-G(\gamma)] ,& |z|<N^{-2},
    \end{cases}
\end{align*}
Then $g(z)$ decreases along the curve $\Gamma^+$ and
\begin{align*}
    &\int_{\Gamma^+}\exp(\frac{N}{2b_N}[G(z)-G(\gamma)])\dd y\geq\int_{\Gamma_1}\exp(\frac{N}{2b_N}[G(z_3)-G(\gamma)])\dd y+\int_{\Gamma_2\cup \Gamma_3}\exp(g(z)))\dd y
    \\&\geq \int_{\Gamma_1}\exp(\frac{N}{2b_N}[G(z_3)-G(\gamma)])-\exp(\frac{N}{2b_N}[G(z_2)-G(\gamma)])\dd y+\int_{\Gamma^+}\exp({g(z)})\dd y
    \\&\geq \int_{\Gamma_1}\exp(\frac{N}{2b_N}[G(z_3)-G(\gamma)])-\exp(\frac{N}{2b_N}[G(z_2)-G(\gamma)])\dd y.
\end{align*}
We need to find a lower bound for the right-side of the equation above. Considering again the Taylor expansion of $G(\gamma+z)$ with the estimates in Lemma \ref{low}, we get
\[
	G(z_2)-G(\gamma) \leq -C_1 \frac{N^{1-2\epsilon}}{b_N}N^{-4}, \quad G(z_3)-G(\gamma) \geq -C_2\frac{N}{b_N}N^{-6}
\]
for some constants $C_1, C_2 > 0$. Since
\begin{align*}
    \int_{\Gamma_1} \dd y \geq \im z_3-\im \gamma \geq C_3 N^{-3}
\end{align*}
for some constant $C_3 > 0$, we find that
\begin{align*}
    \int_{\Gamma_1}\exp(\frac{N}{2b_N}[G(z_3)-G(\gamma)])-\exp(\frac{N}{2b_N}[G(z_2)-G(\gamma)])\dd y 
    \geq C_4 N^{-3}\frac{N}{2b_N}\frac{N^{1-2\epsilon}}{b_N}N^{-4}
\end{align*}
for some constant $C_4>0$. This proves the desired lower bound for $K$ and hence the desired claim. 

\begin{lem}\label{great0}
    Let $f$ be a real-valued function defined on $\Gamma^+$ and $f(z)$ is decreasing along the curve $\Gamma^+$ as z moves from $\gamma$ to $-\infty$
    \begin{align*}
        \int_{\Gamma^+}e^{f(z)}\dd y\geq 0.
    \end{align*}
\end{lem}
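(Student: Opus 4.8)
The plan is to parametrize $\Gamma^+$ and recast $\int_{\Gamma^+} e^{f(z)}\,\dd y$ as a Riemann--Stieltjes integral to which integration by parts can be applied, using only the sign of $y=\im z$ on $\Gamma^+$ together with the monotonicity of $f$. By the implicit function theorem $\Gamma^+$ admits a smooth parametrization $s\mapsto z(s)=x(s)+\ii\, y(s)$, $s\in[0,S_*)$, with $z(0)=\gamma$ and $z(s)\to-\infty$ as $s\uparrow S_*$. Two structural facts will be used: (i) since $\Gamma^+$ lies in the upper half-plane, $y(s)\ge 0$ for all $s$, and $y(0)=\im\gamma=0$; (ii) by hypothesis $\phi(s):=e^{f(z(s))}$ is nonnegative and nonincreasing in $s$, hence of bounded variation on every compact subinterval of $[0,S_*)$.

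First I would work on a compact initial arc. For $0<S<S_*$ the Riemann--Stieltjes integral $\int_0^S \phi\,\dd y$ is well defined (because $y$ is continuous and $\phi$ is of bounded variation) and equals $\int e^{f(z)}\,\dd y$ over the sub-arc $\{z(s):0\le s\le S\}$. Integration by parts for Riemann--Stieltjes integrals then gives
\begin{align*}
\int_0^S \phi(s)\,\dd y(s) \;=\; \phi(S)\,y(S)\;-\;\phi(0)\,y(0)\;-\;\int_0^S y(s)\,\dd\phi(s).
\end{align*}

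Then I would check that each term on the right-hand side is nonnegative. The boundary term $\phi(0)\,y(0)$ vanishes since $y(0)=0$. The boundary term $\phi(S)\,y(S)$ is a product of two nonnegative quantities, as $\phi(S)\ge 0$ and $y(S)\ge 0$ by (i)--(ii). Finally, since $\phi$ is nonincreasing the signed Stieltjes measure $\dd\phi$ is nonpositive, so $-\int_0^S y\,\dd\phi\ge 0$ because $y\ge 0$ on $\Gamma^+$. Hence $\int_0^S\phi\,\dd y\ge 0$ for every $S<S_*$, and letting $S\uparrow S_*$ yields $\int_{\Gamma^+} e^{f(z)}\,\dd y\ge 0$ (the improper integral converges in the situation where the lemma is applied; in any event all the partial integrals are nonnegative, which is all that is used downstream).

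The only point deserving care is that $y=\im z$ need not be monotone along all of $\Gamma^+$ — it is shown in the preceding analysis to increase from $0$ only within $\{|z|<N^{-2}\}$ — so one cannot take $y$ itself as the integration variable and argue by a direct layer-cake decomposition. The Stieltjes integration by parts sidesteps this entirely, relying only on $y\ge 0$ throughout $\Gamma^+$, on $y=0$ at the initial point $\gamma$, and on $f$ being nonincreasing along the curve. The remaining issue, the noncompactness of $\Gamma^+$, is handled by the limit over $S$ above and causes no real difficulty. I do not expect any substantial obstacle here; the content of the lemma is essentially this one integration-by-parts identity together with bookkeeping of signs.
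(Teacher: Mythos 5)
Your proof is correct, and it is the continuous counterpart of the paper's argument, executed more cleanly. The paper parametrizes $\Gamma^+$, splits the parameter interval into finitely many pieces on which $\im s(t)$ is monotone, and then runs a discrete Abel (summation-by-parts) estimate, using the monotonicity of $e^{f}$ and the fact that the curve never crosses the real axis to show the telescoped partial sums $\sum_{m\le k}(\Delta_{2m}-\Delta_{2m+1})$ are nonnegative. You replace that decomposition by a single Riemann--Stieltjes integration by parts, $\int_0^S \phi\,\dd y=\phi(S)y(S)-\phi(0)y(0)-\int_0^S y\,\dd\phi$, and read off the sign of each term from exactly the same three structural facts: $y\ge 0$ on $\Gamma^+$, $y=0$ at the starting point $\gamma$ (which is real), and $\phi=e^{f}$ nonincreasing, hence $\dd\phi\le 0$. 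What your route buys is that you never need the curve to change monotonicity in $\im z$ only finitely often (an assumption the paper's decomposition uses implicitly and does not justify), and the bookkeeping of signs is immediate; the integral exists because $y$ is continuous (indeed $C^1$ by the implicit-function-theorem parametrization) and $\phi$ is monotone, hence of bounded variation on compacts, so the IBP identity is legitimate. The only care points are the ones you already flag: the identification of $\int_{\Gamma^+}e^{f}\dd y$ with the Stieltjes integral along the parametrization, and the passage $S\uparrow S_*$ for the noncompact end of the curve, which is harmless since every partial integral is nonnegative and, in the application, the improper integral converges. No gaps.
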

\begin{proof}[Proof of Lemma \ref{great0}]
Consider the function $s:[0,1]\to \mathbb{C}$ such that  $s(0)= \gamma$, $s(1)=-\infty$ and which follows the contour of  $\Gamma^+$, then we have
\begin{align*}
    \int_{\Gamma^+}e^{f(z)}\dd y=\int_{0}^1e^{f(s(t))}\im s'(t) \dd t.
\end{align*}
As $t$ moves from 0 to 1, we can divide the interval $[0,1]$ into $0=s_0<s_1<...,<s_k=1$ such that $\im s(t)$ increases on $[s_{2m},s_{2m+1}]$ and decreases on $[s_{2m+1},s_{2m+2}]$. \
Let $\Delta_k= s(s_{k+1})-s(s_k)$,then we have
\begin{align*}
    \int_{s_{2m}}^{s_{2m+1}}e^{f(s(t))}\im s'(t) \dd t\geq  e^{s_{2m+1}}(s(s_{2m+1})-s(s_{2m})),
\end{align*}
\begin{align*}
    \int_{s_{2m-1}}^{s_{2m}}e^{f(s(t))}\im s'(t) \dd t\geq -e^{s_{2m-1}}(s(s_{2m})-s(s_{2m-1})).
\end{align*}
Therefore,
\begin{align*}
    \int_{\Gamma^+}e^{f(z)}\dd y&=\int_{0}^1e^{f(s(t))}\im s'(t) \dd t\geq \sum e^{s_{2m+1}}(\Delta_{2m}-\Delta_{2m+1})
    \\&\geq e^{f(s_1)}(\Delta_0-\Delta_1)+\sum_{m\geq 1} e^{s_{2m+1}}(\Delta_{2m}-\Delta_{2m+1})
    \\&\geq e^{f(s_1)}(\Delta_0-\Delta_1)+e^{f(s_3)}(\Delta_2-\Delta_3)+\sum_{m\geq 2} e^{s_{2m+1}}(\Delta_{2m}-\Delta_{2m+1})
    \\&\geq e^{f(s_3)}((\Delta_0-\Delta_1)+(\Delta_2-\Delta_3))+\sum_{m\geq 2} e^{s_{2m+1}}(\Delta_{2m}-\Delta_{2m+1})
    \\&\geq e^{f(s_{2k+1})}(\sum_{m=0}^k \Delta_{2m}-\Delta_{2m+1})+\sum_{m\geq k+1} e^{s_{2m+1}}(\Delta_{2m}-\Delta_{2m+1})
    \\&\geq e^{f(s_{2t+1})}\sum \Delta_{2m}-\Delta_{2m+1}\geq 0.
\end{align*}
This comes from the fact that $\Gamma^+$ does not across the real line and it implies $\sum_{m=0}^k \Delta_{2m}-\Delta_{2m+1}\geq 0$ for arbitrary $k\geq 0$.

\end{proof}
Now, let us get back to the proof of Proposition \ref{low_temp}. 
\begin{proof}[Proof of Proposition \ref{low_temp}]
The logarithm of partition function
\begin{align*}
    \log Z_N=\log C_N+\frac{N\beta \gamma}{b_N}-\frac{1}{2}N\log b_N-\frac{1}{2}\sum_{i=1}^N \log (\gamma/b_N-\lambda_i/b_N)+\log \int_{-\infty}^{\infty}e^{\frac{N}{2b_N}(G(\gamma+it)-G(\gamma)}\dd t.
\end{align*}
Since we proved $$\lim_{N\to\infty }\frac{1}{N}\log \int_{-\infty}^{\infty}e^{\frac{N}{2b_N}(G(\gamma+ti)-G(\gamma))}=0,$$
the limiting distribution of the free energy $F_N=\frac{1}{N}\log Z_N$ is equal to that of
\begin{align*}
     &\frac{1}{N}\log C_N-\frac{1}{2}\log b_N+\frac{\beta\gamma}{b_N}-\frac{1}{2N}\sum_{i=1}^N\log(\gamma/b_N-\lambda_i/b_N)
    \\&= \frac{1}{N}\left[\log \frac{\Gamma(N/2)b_N^{N/2-1}}{2\pi (N\beta)^{N/2-1}b_N^{N/2}}-\frac{1}{2}\sum_{i=1}^N\log(\gamma/b_N-\lambda_i/b_N)\right]+\frac{\beta \gamma}{b_N}.
\end{align*}
Stirling's approximation implies
    \[
        \Gamma(N/2)=\sqrt{\frac{4\pi}{N}}\left(\frac{N}{2e}\right)^{N/2}(1+O(1/N)),
    \]
    hence
$$\lim \frac{1}{N}\log \frac{\Gamma(N/2)}{2\pi (N\beta)^{N/2-1}b_N}=-\frac{1}{2}\log(2e\beta).$$
On the other hand,
\begin{align*}
\sum \log(\gamma/b_N-\lambda_i/b_N)&=\sum_{|\lambda_i|<b_NN^{-\epsilon}}\log(\gamma/b_N-\lambda_i/b_N)+\sum_{|\lambda_i|>b_NN^{-\epsilon}}\log(\gamma/b_N-\lambda_i/b_N) 
\\&=(N-O(N^{3\epsilon}))\log(\gamma/b_N)(1+O(N^{-\epsilon}))+O(N^{3\epsilon})O(\log N).
\end{align*}
This implies that the limit of $\frac{1}{N}\sum_{i=1}^N \log (\gamma/b_N-\lambda_i/b_N)$
is governed by the leading order term $\log (\lambda_1/b_N).$
Together with all above results, the limiting distribution of free energy
\begin{align}
     F_N&=\frac{1}{N}\log \frac{\Gamma(N/2)}{2\pi (N\beta)^{N/2-1}b_N}-\frac{1}{2N}\sum_{i=1}^N\log(\gamma/b_N-\lambda_i/b_N)+\frac{\beta\lambda_1}{b_N}
     \\&+\frac{1}{N}\log \int_{-\infty}^{\infty}e^{\frac{N}{2b_N}(G(\gamma+ti)-G(\gamma))}\nonumber
\end{align}
is equal to
\begin{align}
    -1/2\log(2e\beta\lambda_1/b_N)+\frac{\beta\lambda_1}{b_N}\Rightarrow -\frac{1}{2}\log \beta X+\beta X-1/2\log(2e),
\end{align}
where $X$ is the limiting distribution of $\frac{\lambda_1}{b_N}$. Finally, $\lim_{N \to \infty} \P(F_2)$ can be easily checked from Proposition \ref{Largest eigenvalue}. This concludes the proof of Proposition \ref{low_temp}. 
\end{proof}

\appendix

\section{Proofs of technical lemmas} \label{app:proof}

\begin{proof}[Proof of Lemma \ref{inf_norm}]
We consider the cases $1<\alpha<2$ and $0<\alpha<1$ separately.

\textit{Case 1:} $1<\alpha<2$.
    
    We choose $\epsilon=\frac{1}{2M+1}$, where M is an integer satisfying $\epsilon<\frac{1}{4}-\frac{\alpha}{8}$.
    Then,
    $$\E(\#\{1\leq j \leq N: |M_{ij}|\geq b_N^{k\epsilon}\})=NG(b_N^{k\epsilon}).$$
    For binomial random variable $X\sim B(N,p)$, Chernoff's inequality implies $\P(X\geq \E(X)+t)\leq \exp(-\frac{t^2}{2Np}+\frac{t^3}{6(Np)^2})$. This implies 
    $$\P(\#\{1\leq j \leq N : |M_{ij}|\geq b_N^{k\epsilon}\}\geq 2N G(b_N^{k\epsilon}))\leq \exp(-\frac{NG(b_N^{k\epsilon})}{3})\leq \exp(-N^{\gamma})$$
    for $k\leq M$ and $\gamma=\frac{1}{4M+2}$.
    We apply this for every row and we have
    \beq\label{norm_eq}
    \P(\#\{1\leq j \leq N : |M_{ij}|\geq b_N^{k\epsilon}\}\geq 2N G(b_N^{k\epsilon})\text{ for some } i) \leq N \exp(-N^{\gamma}).
    \eeq
    Therefore,
    \begin{align}
        &\sum_{j:|M_ij|\leq b_N^{\frac{M+1}{2M+1}}} |M_{ij}|\leq \sum_{k=0}^M \#(1\leq j \leq N: |M_{ij}|\geq b_N^{k\epsilon})b_N^{(k+1)\epsilon}
        \\ &\leq \sum_{k=0}^M b_N^{(k+1)\epsilon}2N G(b_N^{k\epsilon})=\sum_{k=0}^M b_N^{(k+1)\epsilon}2NL(b_N^{k\epsilon})b_N^{-\alpha k \epsilon}\leq C_1 b_N^{\epsilon}.
    \end{align}
    This implies the first case.
		
\textit{Case 2:} $0<\alpha\leq 1$.

    We again choose $\epsilon=\frac{1}{2M+1}$, where $\epsilon<\frac{\alpha}{8}$ and $\gamma=\frac{1}{4M+2}$.
    Since \eqref{norm_eq} also holds in this case,
    \begin{align}
        &\sum_{j:|M_{ij}|\leq b_N^{\frac{M+1}{2M+1}}} |M_{ij}|\leq \sum_{k=0}^M \#(1\leq j \leq N: |M_{ij}|\geq b_N^{k\epsilon})b_N^{(k+1)\epsilon}
        \\ &\leq \sum_{k=0}^M b_N^{(k+1)\epsilon}2N G(b_N^{k\epsilon})=\sum_{k=0}^M b_N^{(k+1)\epsilon}2NL(b_N^{k\epsilon})b_N^{-\alpha k \epsilon}\leq C_2 b_N^{\epsilon} 2N b_N^{\frac{M(1-\alpha)}{2M+1}}\leq C_2 b_N^{\frac{1}{2}+2\epsilon}.
    \end{align}
\end{proof}
\begin{proof}[Proof of Lemma \ref{ls_eig}]
 Given any $\epsilon > 0$, we have the following:
 \begin{align*}
&\P(\max_{1\leq i\leq j \leq N}|M_{ij}|< b_N^{1-\epsilon})=(1-\P(|M_{11}|>b_N^{1-\epsilon}))^{N(N+1)/2}
 \\&=O(\exp(-{\frac{N(N+1)}{2} \P(|M_{11}|>b_N^{1-\epsilon}) })=O(\exp(-\frac{N(N+1)}{2} b_N^{(1-\epsilon)(-\alpha+\epsilon')})))
 \\&=O(\exp(-N^{\delta}))
 \end{align*} for some $\delta>0$.
 This implies that $$\max_{1\leq i\leq j \leq N}|M_{ij}|>b_N^{1-\epsilon}$$ holds with high probability for every $\epsilon>0$.
 Coupled with Lemma \ref{inf_norm}, this result leads to the following bound on the extreme eigenvalues of $M$:
  \[
		|\lambda_N|, \lambda_1 \leq \| M \|_{\infty}\leq \max_{1\leq i\leq N}\sum_{1\leq j\leq N} |M_{ij}|=\max_{1\leq i\leq j \leq N}|M_{ij}|(1+O(b_N^{-1/8})).
	\]
	This proves the upper bounds of the desired lemma. For the lower bounds, we let $e_k$ be the $k$-th standard basis vector and let $f_1=\frac{1}{\sqrt{2}} (e_i+e_j), f_2=\frac{1}{\sqrt{2}}(e_i-e_j)$. Then $|f_1^* M f_1|, |f_2^* M f_2|\geq \max |M_{ij}|(1+O(b_N^{-1/8}))$. Moreover, it is not hard to see that the signs of $f_1^* M f_1, f_2^* M f_2$ are different.
 If $f_1^*Mf_1>0$, then $\lambda_1\geq f_1^*Mf_1,\lambda_n\leq f_2^*Mf_2$. If $f_1^*Mf_1<0$,then $\lambda_1\geq f_2^*Mf_2,\lambda_n\leq f_1^*Mf_1$.
 This and with high probability conditions about each rows in Lemma \ref{lem:whp_set1} conclude the proof of the lemma.
\end{proof}
\begin{proof}[Proof of Lemma \ref{lem:gap_eig}]
    Let $$M_{0}=\max_{1\leq i,j \leq N}{|M_{ij}|}$$
\begin{align*}
    \P(\lambda_1<b_N N^{-\epsilon})&\leq \P(\lambda_1< b_N N^{-\epsilon}, \lambda_1=M_0(1+O(b_N^{-1/8}))  )+\P(\lambda_1 \neq M_0(1+O(b_N^{-1/8})))
    \\&\leq \P(M_0<2b_N N^{-\epsilon})+\P(\lambda_1 \neq M_0(1+O(b_N^{-1/8})))
\end{align*}
Since Lemma \ref{lem:a_N} implies $b_N\geq N^{2/\alpha -\epsilon}$ for every $\epsilon>0$,
\begin{align*}
    \P(M_0<2b_N N^{-\epsilon})=(1-G(b_N N^{-\epsilon}))^{N(N+1)/2}=O(e^{-N^{\epsilon\alpha/2}})
\end{align*}
and due to Lemma \ref{ls_eig}
$$\P(\lambda_1 \neq M_0(1+O(b_N^{-1/8})))< N^{-\delta}$$ for some $\delta>0$.
These imply $$\P(\lambda_1<b_N N^{-\epsilon})<2N^{-\delta}$$ and this implies 
$\lambda_1>b_N N^{-\epsilon}$ holds with high probability for every $\epsilon>0$.
\\For $x<y<x(1+Cx^{-\epsilon})$,
\begin{align*}
   &(1-G(y))^{{N(N+1)}/{2}}-(1-G(x))^{N(N+1)/2}\leq (G(x)-G(y))\frac{N(N+1)}{2}(1-G(x))^{N(N+1)/2-1}
   \\& \ \ \ \leq\frac{G(x)-G(y)}{G(x)}\frac{N(N+1)}{2}G(x)(1-G(x))^{N(N+1)/2-1}
\end{align*}
Since $e^{x^{\delta}}L(x)$ is increasing function for $\delta=\epsilon/2$, 
$L(y)/L(x)\geq e^{x^{\delta}-y^{\delta}}\geq e^{\delta(x-y)x^{\delta-1}}$.
\\Hence,
$$\frac{G(y)}{G(x)}\geq \exp(\alpha \log x-\alpha \log y+\delta(x-y)x^{\delta-1} )\geq \exp(-C'x^{-\epsilon/2}).$$
This implies
$$\frac{G(x)-G(y)}{G(x)}\leq 1- \exp(-C'x^{-\epsilon/2})=O(x^{-\epsilon/2}).$$
Also, 
$$\lim_{N\to \infty}\frac{N(N+1)}{2}G(x)(1-G(x))^{N(N+1)/2-1}=ue^{-u} $$
where
$$ \lim_{N\to\infty} \frac{N(N+1)}{2}G(x)=u.$$
Since $ue^{-u}$ is bounded, $\frac{N(N+1)}{2}G(x)(1-G(x))^{N(N+1)/2-1}$ is also bounded. 
\\This means 
\beq \P(x<M_0<y)=O(x^{-\epsilon/2}) \label{qua_eig}\text{ for }y=x(1+O(x^{-\epsilon})).\eeq
\\For every $\epsilon>0$, since $\lambda_1> b_N N^{-\epsilon/2}$ holds with high probability, $\lambda_1-\lambda_2< b_N N^{-\epsilon}$ implies $\lambda_2> b_N N^{-\epsilon/2}-b_N N^{-\epsilon}$, this and \eqref{qua_eig} implies
\begin{align*}
    &\P(\lambda_1-\lambda_2<b_NN^{-\epsilon})=\P(\lambda_2< \lambda_1< \lambda_2+ b_N N^{-\epsilon})
    \\& \ \ \leq \P(\lambda_2< M_0(1+O(b_N^{-1/8}))< \lambda_2+ b_N N^{-\epsilon})+\P(\lambda_1\neq M_0(1+O(b_N^{-1/8})))=O(N^{-\epsilon'})
\end{align*}
for some $\epsilon'>0$.
\\Similarly, \eqref{qua_eig} implies
\begin{align*}
    &\P(Cb_N(1-N^{-\epsilon})<\lambda_1< Cb_N(1+N^{-\epsilon}))
    \\&\leq \P(Cb_N(1-N^{-\epsilon})<M_0(1+O(b_N^{-1/8}))< Cb_N(1+ N^{-\epsilon}))+ \P(\lambda_1\neq M_0(1+O(b_N^{-1/8})))
    \\&=O(N^{-\epsilon'})
\end{align*}
for some $\epsilon'>0$.
The special case $C=\frac{1}{2\beta}$ is our desired result. 

\end{proof}
\begin{proof}[Proof of Lemma \ref{lem:a_N}]
In order to prove the first part of the lemma, it suffices to show that for any $0<\epsilon<\alpha$ there exists $C_\epsilon>0$ such that $C_{\epsilon} t^{-\alpha-\epsilon}<\P(|X|>t)<C_{\epsilon} t^{-\alpha+\epsilon}$ holds for any $t>C_{\epsilon}$. For a given $\epsilon>0$, we can easily see from the definition of heavy-tailed random variable in Definition \ref{stable law} that there exists $t_0>0$ such that
\[
	\P(|X|>2t)/\P(|X|>t)<2^{-\alpha+\epsilon}
\]
holds for any $t>t_0$. For any $y>2t_0$ we can choose an integer $m$ such that $t_0<y/2^m\leq 2t_0$. Applying the inequality above $m$-times, we find that
\[
	\P(|X|>y)/\P(|X|>y/2^m)\leq 2^{m(-\alpha+\epsilon)}\leq (2t_0/y)^{\alpha-\epsilon}=y^{-\alpha+\epsilon}(2t_0)^{\alpha-\epsilon}.
\]
Hence, we obtain
\[
	\P(|X|>y)<\P(|X|>y/2^m)y^{-\alpha+\epsilon}(2t_0)^{\alpha-\epsilon}=\P(|X|>t_0)(2t_0)^{\alpha-\epsilon}y^{-\alpha+\epsilon}=C_{\epsilon}y^{-\alpha+\epsilon},
\]
which proves the desired upper bound. The proof for the lower bound is similar and we omit it here.

The second part of the lemma obviously holds due to the first part. This concludes the proof of the lemma.
\end{proof}
\begin{proof}[Proof of Lemma \ref{lem:X^delta}]
    Let $Y=X\mathbbm{1}_{\{X\leq a_N\}}$, $Z=X\mathbbm{1}_{\{X>a_N\}}$ and let $f$ be the probability density function of X and 
    $F(x)=\int_{x}^{\infty} f(s)\dd s$
    $$\E(Y^{\delta})=\int_{0}^{a_N}x^{\delta}f \dd x=[-x^{\delta}F(x)]_{0}^{a_N}+\int_{0}^{a_N} \delta x^{\delta-1}F(x) \dd x$$
    For $\epsilon< \alpha-\delta$
    $$\int_{0}^{a_n}\delta x^{\delta-1}F(x)\dd x\leq \int_{0}^{C_{\epsilon}}\delta x^{\delta-1}\dd x +\int_{C_{\epsilon}}^{a_n} C_{\epsilon} \delta x^{\delta-1} x^{-\alpha+\epsilon}=O(1)$$
    where $C_{\epsilon}$ is such that for $\epsilon>0$ which satisfies $\P(X>t)\leq C_{\epsilon} t^{-\alpha+\epsilon} $ for every $t>C_{\epsilon}$
    $$\E(Z^{\delta})=\int_{a_N}^{\infty} x^{\delta} f\dd x=[-x^{\delta}F]_{a_N}^{\infty}+\int_{a_N}^{\infty} \delta x^{\delta-1}F(x)\dd x$$
    $\lim_{x\to \infty} x^{\delta}F(x)\leq \lim_{x\to\infty} C_{\epsilon}x^{\delta}x^{-\alpha+\epsilon}=0$ for $\epsilon< \alpha-\delta$
    and 
    $$\int_{a_N}^{\infty} \delta x^{\delta-1}F(x) \dd x\leq \int_{a_N}^{\infty} \delta C_{\epsilon} x^{\delta-1}x^{-\alpha+\epsilon} \dd x=O(a_N^{-\alpha+\delta+\epsilon}).$$
\end{proof}
\begin{proof}[Proof of Lemma \ref{lem:sum_eig_sq_bound}]
    Since
    $$\sum_{i} \lambda_i^2=\sum_{i,j=1}^N M_{ij}^2$$
    and $M_{ij}^2$ follows $\frac{\alpha}{2}$-stable laws and $\alpha/2<1$ holds,
    \\ for $\delta< \alpha/2<1$ 
    $$\E((\sum_{i} \lambda_i^2)^{\delta})=O(N^2)$$
    due to Lemma \ref{lem:sum X^delta}.
    Hence, for every $\epsilon>0$
    $$\P(\sum_{i} \lambda_i^2> N^{4/\alpha+\epsilon})N^{\delta (4/\alpha+\epsilon)}\leq \E((\sum_{i} \lambda_i^2)^{\delta})=O(N^2)$$
    Choose $\delta= \alpha/2 -\epsilon/100$ and we have
    $$\P(\sum_{i} \lambda_i^2> N^{4/\alpha+\epsilon})= O(N^{-\alpha\epsilon/4}).$$
\end{proof}
\begin{proof}[Proof of Lemma \ref{lem:num_eig}]
    Since above lemma and $b_N<N^{2/\alpha+\epsilon/10}$ with high probability due to Lemma \ref{lem:sum_eig_sq_bound}, $$\P(\sum_{i} \lambda_i^2 >b_N^2 N^{\epsilon})= O(N^{-\alpha\epsilon/8})$$
	holds with high probability.
  \\ Hence, with high probability, we obtain
    $$\sum_{i} \lambda_i^2\leq b_N^2 N^{\epsilon}$$ 
    and this implies
    $$\#\{ |\lambda_i|>b_N N^{-\epsilon}\}=O(N^{3\epsilon}).$$  
\end{proof}
\begin{proof}[Proof of Lemma \ref{lem:trace_square}]
    For the sum of the square of eigenvalues, we have
    $$\sum_{i} \lambda_i^2= \Tr(MM^*)=\sum_{i,j} a_{ij}^2,$$
    where we set $a_{ij}=M_{ij}$.
    Let $$c_N= \inf \{t: \P(a_{ij}^2>t)<\frac{2}{N(N+1)}\}=b_N^2.$$ 
    By Proposition \ref{stable_dist}, for $$d_N=\frac{N(N+1)}{2}\E(a^2_{11}\mathbbm{1}_{a_{11}^2\leq c_N})=\frac{N(N+1)}{2}\E(a_{11}^2 \mathbbm{1}_{|a_{11}|\leq b_N }).$$
    Since $a_{ij}^2$ follows $\alpha/2<1$ stable law, the remark implies $$d_N/c_N \to c$$ for some constant c.
    Since $$\sum_{i,j=1}^N a_{ij}^2= 2\sum_{1\leq i\leq j\leq N} a_{ij}^2-\sum_i a_{ii}^2,$$
    this implies
    $$ \frac{\sum_{i\leq j }a_{ij}^2-d_N}{c_N}\Rightarrow Y$$ for some non-degenerate random variable Y.
    
    Hence,
    $$\frac{\sum \lambda_i^2}{b_N^2}\Rightarrow X$$ for some non-degenerate random variable X.
\end{proof}
\begin{proof}[Proof of Lemma \ref{lem:lim_of_Tr}]
Let us assume that each entry follows a $\alpha$-stable random variable X and $a_N=\inf\{u:1-\P(|X|>u)\leq 1/N\}$ and $c_N= N\E(X\mathbbm{1}_{|X|\leq a_N})$. The sum of eigenvalues satisfies
$$\sum_{i=1}^N \lambda_i=\Tr(M)=\sum_{i=1}^N {M_{ii}}.$$
For $\alpha<1$, Proposition \ref{stable_dist} implies convergence of  $\Tr(M)/a_N$ and $\lim_{N\to \infty} a_N/b_N=0$. This shows $$\lim_{N\to\infty} \sum_{i=1}^N\lambda_i/b_N=0.$$
For $\alpha=1$,
$$\E(X\mathbbm{1}_{|X|\leq a_N})=\int_0^{a_N}\P(|X|> u)\dd u(1+o(1))=O(N^{\epsilon}).$$
for every $\epsilon>0.$
Since $b_N\geq N^{2/\alpha-\epsilon}=N^{2-\epsilon}$ for every $\epsilon>0$, $\lim_{N\to\infty}c_N/b_N=0.$
This and convergence of $(\Tr(M)-c_N)/a_N$ coming from Proposition \ref{stable_dist} implies 
$$\lim_{N\to\infty } \Tr(M)/b_N=0$$
For $\alpha>1$, 
$$\E(X\mathbbm{1}_{|X|\leq a_N})= \int_0^{a_N}\P(|X|> u)\dd u(1+o(1))= O(1).$$
Furthermore, $b_N\geq N^{2/\alpha-\epsilon}$ for every $\epsilon>0$ and $\alpha<2$ implies $\lim_{N\to\infty }N/b_N=0$ and $\lim_{N\to \infty }c_N/b_N=0.$
Therefore, convergence of $(\Tr(M)-c_N)/a_N$ coming from Proposition \ref{stable_dist} and the above implies 
$$\lim_{n\to\infty} \Tr(M)/b_N=0.$$
\end{proof}
\begin{proof}[Proof of Lemma \ref{lem:integral representation}]
We can diagonalize the interaction matrix $M=O^{T}DO$ for an orthogonal matrix O and a diagonal matrix $D=\diag(\lambda_1,..,\lambda_N)$ and apply the change of variable  $x\to O^{-1}x$ then we have
\begin{align*}
    Z_n&=\frac{1}{|S_N|}\int e^{\beta b_N^{-1} <x,Mx>}\dd \Omega=\frac{1}{|S_N|}\int_{S_N }e^{\beta b_N^{-1} \sum_i \lambda_i x_i^2}.
\end{align*}
We will use Laplace transform to calculate above statistics, so we define below statistics. 
\begin{align*}
    J(z)&=\int_{\R^N}e^{\beta N\sum_i\lambda_iy_i^2}e^{-\beta N z\sum_i y_i^2}\dd y, \ \re z>\lambda_1
    \\I(t)&= \int_{\mathbb{S}^{N-1}} e^{t\sum_i \lambda_i x_i^2}\dd\Omega
\end{align*}
$J(z)$ is calculated as
\begin{align*}
    J(z)&=\int_{\R^N}e^{\beta N\sum_i(\lambda_i-z)y_i^2}\dd y
    =(\frac{\pi}{\beta N})^{N/2}\prod_i \frac{1}{\sqrt{z-\lambda_i}}, \re z>\lambda_1
\end{align*}
and
\begin{align*}
    J(z)&=\frac{1}{2(\beta N)^{N/2}}\int_{0}^{\infty}e^{-zt}t^{N/2-1}I(t)\dd t.
\end{align*}
Now we apply Laplace transform and we have
\begin{align*}
    \frac{t^{N/2-1}I(t)}{2(\beta N)^{N/2}}&=\frac{1}{2\pi i}\int_{\gamma -i\infty}^{\gamma + i\infty}e^{zt}J(z)\dd z
    =\left(\frac{\pi}{\beta N}\right)^{N/2}\frac{1}{2\pi i}\int_{\gamma-i\infty}^{\gamma+i\infty}e^{zt}\prod\frac{1}{\sqrt{z-\lambda_i}}\dd z.
\end{align*}
On the other hand,
\begin{align*}
     Z_N&=\frac{1}{S^{N-1}}\int_{S^{N-1}}e^{b_N^{-1}\beta N\sum_i \lambda_i x_i^2}\dd\sigma=\frac{1}{|S^{N-1}|}I(\frac{\beta N}{b_N})
    \\&=\frac{\Gamma(N/2)}{2\pi^{N/2}}\cdot\frac{2\pi^{N/2}b_N^{N/2-1}}{(\beta N)^{N/2-1}}\frac{1}{2\pi i}\int_{\gamma-i\infty}^{\gamma+i\infty}e^{\frac{\beta N z}{b_N}}\prod_{i=1}^N\frac{1}{\sqrt{z-\lambda_i}}\dd z
    \\&=C_N\frac{1}{i}\int_{\gamma-i\infty}^{\gamma+i\infty} e^{\frac{N}{2b_N}G(z)}\dd z
\end{align*}
where
\begin{align*}
    G(z)=2\beta z-\frac{b_N}{N}\sum_{i=1}^N \log(z-\lambda_i), \ C_N=\frac{\Gamma(N/2)b_N^{N/2-1}}{2\pi (N\beta)^{N/2-1}}.
\end{align*}
\end{proof}

\begin{proof}[Proof of Lemma \ref{lem:gamma}]
We first consider the first case, $\lambda_1<\frac{b_N}{2\beta}$. From Lemma \ref{lem:num_eig},
 $$0 \leq \sum_{|\lambda_i|>b_NN^{-\epsilon}}\frac{1}{z-\lambda_i}\leq \#\{ |\lambda_i|>b_N N^{-\epsilon}\} \frac{1}{z-\lambda_1}=O(\frac{N^{3\epsilon}}{z-\lambda_1}).$$
Since we have
    $$z>\lambda_1> b_N N^{-\epsilon/4}$$
with high probability due to Lemma \ref{lem:gap_eig},
    $$ \sum_{|\lambda_i|<b_N N^{-\epsilon}}\frac{1}{z-\lambda_i}= (N-O(N^{3\epsilon}))\frac{1}{z}(1+O(N^{-3/4\epsilon}))=\frac{N(1+O(N^{-3/4\epsilon}))}{z}.$$
    Summing up them and we have
    $$G'(z)=2\beta-\frac{b_N}{N}(O(\frac{N^{3\epsilon}}{z-\lambda_1})+\frac{N(1+O(N^{-3/4\epsilon})}{z})$$
    $$G'(\frac{b_N}{2\beta}-b_NN^{-\epsilon/2})<0, \ G(\frac{b_N}{2\beta}+b_NN^{-\epsilon/2})>0$$
    Here $\lambda_1<\frac{b_N}{2\beta}$ implies $\lambda_1<\frac{b_N}{2\beta}-b_NN^{-\epsilon/4}$ holds with high probability due to Lemma \ref{lem:gap_eig}.
    Since $\lambda_1=\max \{|M_{ij}|(1+O(b_N^{-1/8}))\}$ and $|\lambda_N|=\max \{|M_{ij}|(1+O(b_N^{-1/8}))\}$ due to Lemma \ref{ls_eig},
    $$\gamma=\frac{b_N}{2\beta}(1+O(N^{-\epsilon/2}))>\lambda_1,|\lambda_N|$$
    holds with high probability.
    Hence,
    $$G'(\gamma)=2\beta-\frac{b_N}{N}\sum_{i=1}^N\frac{1}{\gamma-\lambda_i}=2\beta -\frac{b_N}{N \gamma}(N+\sum_{i=1}^N \sum_{k=1}^{\infty}\lambda_i^k/\gamma^k).$$
    \\Let us show that $$X_N=\sum_{i=1}^N \sum_{k=1}^{\infty}\lambda_i^k/\gamma^k$$ converges to non degenerate random variable X.
    
    Lemma \ref{lem:lim_of_Tr} implies 
    $$\lim_{N\to\infty} \sum_{i=1}^N \lambda_i/\gamma= 0.$$
    The remaining terms are
    $$\left|\frac{\sum_{i=1}^N \lambda_i^k}{\gamma^k}\right|=\left|\frac{\sum_{i=1}^N \lambda_i^k}{(b_N/2\beta)^k}(1+O(N^{-\epsilon}))\right|\leq\frac{\Gamma^{k-2}\sum_{i=1}^N \lambda_i^2}{(b_N/2\beta)^k}(1+O(N^{-\epsilon})) $$
    where $\Gamma=\max\{|\lambda_1|,|\lambda_N|\}$.
    \\Since $\Gamma/(b_N/2\beta)$ converges to same random variable with convergence of $\lambda_1/(b_N/2\beta)$ and Lemma \ref{lem:trace_square} implies $\sum_{i=1}^N\lambda_i^2/(b_N/2\beta)^2$ converges to a random variable,
    $$|X_N|\leq \sum_i \lambda_i/\gamma +\frac{\Gamma/(b_N/2\beta)}{1-(\lambda_1/(b_N/2\beta))}\sum_{i=1}^N(\lambda_i/(b_n/2\beta))^2(1+O(N^{-\epsilon}))$$
    implies that its upper bound has convergence random variable.
    \\Using the same method of proving Lemma \ref{lem:T}, this implies $$0=G'(\gamma)= 2\beta -\frac{b_N}{N\gamma}(N+ X_N)$$ and $X_N$ converges to a non degenerate random variable X.
    \\Thus, $$\gamma= \frac{b_N}{2\beta}+\frac{b_N X_N}{2\beta N}$$
    \\Case 2: $\lambda_1>\frac{b_N}{2\beta}$ holds.
    $$\sum_{|\lambda_i|>b_NN^{-\epsilon}} \frac{1}{z-\lambda_i}=\frac{1}{z-\lambda_1}+\frac{O(N^{3\epsilon})}{z-\lambda_2}$$
    Summing up and we obtain
    $$G'(z)=2\beta-\frac{b_N}{N}(\frac{1}{z-\lambda_1}+O(\frac{N^{3\epsilon}}{z-\lambda_2})+\frac{N(1+O(N^{-3/4\epsilon})}{z})$$
    Since $\lambda_1-\lambda_2>b_NN^{-\epsilon}$ holds with high probability due to Lemma \ref{lem:gap_eig},
    $$G'(z)= 2\beta -\frac{b_N}{N}(\frac{1}{z-\lambda_1}+O(\frac{N^{4\epsilon}}{b_N})+\frac{N(1+O(N^{-3/4\epsilon})}{z})$$ for $z>\lambda_1$.
    Hence,
    $$G'(\lambda_1+\frac{1}{2\beta-\frac{b_N}{\lambda_1}}\frac{b_N}{N}+\frac{b_N}{N}N^{-\epsilon/2})>0,G'(\lambda_1+\frac{1}{2\beta-\frac{b_N}{\lambda_1}}\frac{b_N}{N}-\frac{b_N}{N}N^{-\epsilon/2})<0.$$
    This implies $$\gamma=\lambda_1+\frac{1}{2\beta-\frac{b_N}{\lambda_1}}\frac{b_N}{N}+O(\frac{b_N}{N}N^{-\epsilon/2}).$$
    \end{proof}

\section{Statistical property for T}\label{app:stat_T}
We also want to know the property of the statistics T.
Let $T_N=-\sum_i \log (1-\frac{\lambda_i}{\gamma})$
$$\E(-\log (1-\sum_{i=1}^N \lambda_i/\gamma))= \sum_{k=1}^{\infty} \frac{1}{k}\E(\sum_{i=1}^N (\lambda_i/\gamma)^k)$$
In this case is the conditional case and every entry of interaction matrix follows the $Y=X1_{|X|\leq \frac{b_N}{2\beta}}$.
$$\lim_{N\to \infty}\E((\frac{b_N}{2\beta})^{-k}\sum \lambda_i^k)=\begin{cases}
    0 & \textit{k is odd}
    \\ \frac{k}{k-\alpha} &\textit{k is even }
\end{cases}$$
$$\E(\sum_{i=1}^N \lambda_i^k)=\E(Tr M^k)=\E(\sum M_{i_1i_2}...M_{i_ki_1})$$
Since k is finite, if $\{i_1,...,i_k\}$ has t vertices, then the possible choice is $n(n-1)...(n-t+1)$.
\[\E(|Y|)=\int_{0}^{b_N/2\beta}\P(|Y|>u)\dd u= \begin{cases}
    \frac{(2\beta)^{\alpha}}{1-\alpha}\frac{b_N}{2\beta}\frac{2}{N(N+1)}(1+o(1))&  \alpha<1 \\
    O(1) & \textit{otherwise}
\end{cases}\]
For $k\geq 2$
\[
    \E(|Y|^k)=\int_{0}^{b_N/2\beta} ku^{k-1}\P(|Y|>u)\dd u= \frac{(2\beta)^{\alpha}}{k-\alpha}(\frac{b_N}{2\beta})^{k}\frac{2k}{N(N+1)}(1+o(1))
\]
Therefore for the corresponding graph $T$,
\[
\E(\sum_{T} M_{i_1i_2}...M_{i_ki_1})=\prod_{x=1}^{t}(N-x+1)\E(e_1^{t_1}...e_l^{t_l})=\prod_{x=1}^{t}(N-x+1)\prod_{i=1}^{l}\E(Y^{t_i})
\]
For $\alpha<1$,
\[\E(\sum_{T} M_{i_1i_2}...M_{i_ki_1})=(\frac{b_N}{2\beta})^{k}\prod_{x=1}^{t}(N-x+1)\prod_{i=1}^{l}\frac{2(2\beta)^{\alpha}t_i}{(t_i-\alpha)N(N+1)}\]
\[
    \lim_{N\to\infty}\E((\frac{b_N}{2\beta})^{-k}\sum_{T}M_{i_1i_2}...M_{i_ki_1})
\]
it converges not to 0, only if $t=2l$.
$t$ is the number of vertices, and $l$ is the distinct kind of edges in $\{i_1i_2,...,i_ki_1\}$.Since the number of edges are at least $t-1$, $t=2l\geq 2t-2$ and $t\leq 2$. Hence, the only possible case is $l=1, t=2.$ This case is only possible when k is even and when the graph T is $k$ edges between two vertices.
Combining altogether, we have
\[
    \lim_{N\to\infty}\E((\frac{b_N}{2\beta})^{-k}(\sum_{i=1}^N \lambda_i^k))
    =\begin{cases}
       0 & k \textit{ is odd}\\ \frac{2(2\beta)^{\alpha}k}{k-\alpha} & k \textit{ is even} 
    \end{cases}
\]
Therefore, 
\[
    \lim_{N\to\infty}\E(-\sum_{i=1}^N\log(1-\frac{\lambda_i}{b_N/2\beta}))=\lim_{N\to\infty}\E(\sum_{k=1}^{\infty} \frac{1}{k}\sum (\lambda_i/(b_N/2\beta))^k)=\sum_{t=1}^{\infty} \frac{2(2\beta)^{\alpha}}{(2t-\alpha)}
\]
Similarly for $\alpha\geq 1$, let $l'$ be the number of edges appear once in $T$. 
\[\E(\sum_{T} M_{i_1i_2}...M_{i_ki_1})=(\frac{b_N}{2\beta})^{k-l'}\prod_{x=1}^{t}(N-x+1)\prod_{i=1}^{l-l'}\frac{2(2\beta)^{\alpha}t_i}{(t_i-\alpha)N(N+1)}O(1)^{l'}\]
To make the limit
\[
    \lim_{N\to\infty}\E((\frac{b_N}{2\beta})^{-k}\sum_{T}M_{i_1i_2}...M_{i_ki_1})
\]
not to be 0, it should satisfy $l+1\geq t\geq l'(2/\alpha)+2(l-l')$.
Let $l''=l-l'$, then $l'+l''+1\geq (2/\alpha)l'+2l''$.
Since $l''+(\alpha/2-1)\leq 1$, $l''=0 \textit{ or } 1$. If $l''=0$, then  
\[
    \lim_{N\to\infty}\E((\frac{b_N}{2\beta})^{-k}\sum_{T}M_{i_1i_2}...M_{i_ki_1})=(\frac{b_N}{2\beta})^{-k}\prod_{x=1}^{t}(N-x+1)O(1)^k=0
\]
since $t\leq k<\frac{2}{\alpha}k$.
Therefore, the remaining part is $l''=1,l'=0$. This is same with above. Now we have
\[
    \lim_{N\to\infty}\E(-\sum_{i=1}^N\log(1-\frac{\lambda_i}{b_N/2\beta}))=\lim_{N\to\infty}\E(\sum_{k=1}^{\infty} \frac{1}{k}\sum_{i=1}^N (\lambda_i/(b_N/2\beta))^k)=\sum_{t=1}^{\infty} \frac{2(2\beta)^{\alpha}}{(2t-\alpha)}
\]
Due to Lemma \ref{lem:num_eig}, $\lim_{N\to \infty} \sum_{i=1}^N \{\log (1-\frac{\lambda_i}{b_N/2\beta})-\log(1-\frac{\lambda_i}{\gamma})\}=0.$
This implies 
\[\E(T)=\lim_{N\to \infty}\E(\frac{1}{2}T_N)=\sum_{t=1}^{\infty}\frac{(2\beta)^{\alpha}}{(2t-\alpha)}\] 
for any cases.
Now, let us get back to the calculation of the limit of $\E(T^2)$.
To calculate this, we calculate 
\[
    \E(\sum_{i=1}^N \lambda_i^k \sum_{i=1}^N \lambda_i^l)=
    \E(\sum_{T_1\cup T_2}(M_{i_1i_2}...M_{i_ki_1})(M_{j_1j_2}...M_{j_lj_1}))
\]
Let t be the number of elements in $\{i_1i_2,...,i_{k-1}i_k,i_ki_1,j_1j_2,..,j_{l-1}j_l,j_lj_1\}$ and let s be the number of different edges.
For fixed $T_1\cup T_2$, we also have similar equation.
For $\alpha<1$
\[\E((\frac{b_N}{2\beta})^{-k-l}\sum_{T_1\cup T_2} (M_{i_1i_2}...M_{i_ki_1})(M_{j_1j_2}...M_{j_lj_1})))=\prod_{x=1}^{t}(N-x+1)\prod_{i=1}^{s}\frac{2(2\beta)^{\alpha}t_i}{(t_i-\alpha)N(N+1)}\]
It converges only when t=2s, and $s\geq t-2=2s-2$. The possible cases to above equation converges are $(t,s)=(2,1),(4,2)$. More precisely, the possible cases are
\begin{itemize}
    \item   $T_1$ consists of $k$ edges $e_1$, $T_2$ consists of $l$ edges $e_2\neq e_1$.
    \item $T_1$ consists of $k$ edges $e$.
\end{itemize}
$k,l$ are even number for both cases.
Therefore,
\[
    \lim _{N\to\infty}\E((\frac{b_N}{2\beta})^{-k-l}(\sum \lambda_i^k)(\sum \lambda_i^l))=\begin{cases}
    \frac{(2(2\beta)^{\alpha})^2kl}{(k-\alpha)(l-\alpha)}+\frac{2(2\beta)^{\alpha}(k+l)}{k+l-\alpha}& k,l \textit{ are even}
    \\ 0 & \textit{otherwise}
    \end{cases}
\]
For $\alpha\geq1$ case, for the similar reason from the calculation of expectation, it also has the same value as above.

Since
\[
    (\sum \log(1-\lambda_i/(b_N/2\beta))^2=\sum_{k=1}^{\infty}\sum_{l=1}^{\infty}(\frac{b_N}{2\beta})^{-k-l}\frac{1}{kl}\sum \lambda_i^k \sum \lambda_i^l,
\]
\begin{align*}
    \E(T^2)&=\lim_{N\to\infty}\E(\frac{1}{4}T_N^2)
    =\lim_{N\to \infty} (\sum \log(1-\lambda_i/(b_N/2\beta))^2\\&=
    \sum_{k=1}^{\infty}\sum_{l=1}^{\infty} \frac{((2\beta)^\alpha)^2 4kl}{2k(2k-\alpha)2l(2l-\alpha)}+ \frac{(2\beta)^{\alpha}(k+l)}{4kl(2k+2l-\alpha)}.
\end{align*}
Moreover, the variance is
\[
    V(T)=\lim_{N\to\infty}V(\frac{1}{2}T_N)= \sum_{k=1}^{\infty}\sum_{l=1}^{\infty}\frac{(2\beta)^{\alpha}(k+l)}{4kl(2k+2l-\alpha)}=\sum_{n=1}^{\infty}\frac{(2\beta)^{\alpha}}{2(2n+2-\alpha)}\sum_{i=1}^n \frac{1}{i}.
\]

\subsection*{Acknowledgement}
 The first author thanks to Ji Hyung Jung for helpful discussions. The work was partially supported by National Research Foundation of Korea under grant number NRF-2019R1A5A1028324 and NRF-2023R1A2C1005843.
\subsection*{Conflict of interest}
The authors have no relevant financial or non-financial interests to disclose.
\subsection*{Data Availability}
No datasets were generated or analysed during the current study.
%\bibliographystyle{abbrv}
%\bibliography{ref}

\begin{thebibliography}{10}

\bibitem{aggarwal2021goe}
A.~Aggarwal, P.~Lopatto, and H.-T. Yau.
\newblock Goe statistics for l{\'e}vy matrices.
\newblock {\em Journal of the European Mathematical Society},
  23(11):3707--3800, 2021.

\bibitem{levyspectrum2008}
G.~B. Arous and A.~Guionnet.
\newblock The spectrum of heavy tailed random matrices.
\newblock {\em Communications in Mathematical Physics}, 278(3):715--751, 2008.

\bibitem{LargeLevy2009}
A.~Auffinger, G.~Ben~Arous, and S.~P{\'e}ch{\'e}.
\newblock Poisson convergence for the largest eigenvalues of heavy tailed
  random matrices.
\newblock In {\em Annales de l'IHP Probabilit{\'e}s et statistiques},
  volume~45, pages 589--610, 2009.

\bibitem{baik2021spherical}
J.~Baik, E.~Collins-Woodfin, P.~Le~Doussal, and H.~Wu.
\newblock Spherical spin glass model with external field.
\newblock {\em Journal of Statistical Physics}, 183(2):31, 2021.

\bibitem{2016fluctuations}
J.~Baik and J.~O. Lee.
\newblock Fluctuations of the free energy of the spherical
  sherrington--kirkpatrick model.
\newblock {\em Journal of Statistical Physics}, 165:185--224, 2016.

\bibitem{baik2017fluctuations}
J.~Baik and J.~O. Lee.
\newblock Fluctuations of the free energy of the spherical
  {S}herrington--{K}irkpatrick model with ferromagnetic interaction.
\newblock {\em Annales Henri Poincar{\'e}}, 18(6):1867--1917, 2017.

\bibitem{Baik-Lee2018}
J.~Baik and J.~O. Lee.
\newblock Free energy of bipartite spherical {S}herrington-{K}irkpatrick model.
\newblock {\em Annales de l'Institut Henri Poincar\'{e} Probabilit\'{e}s et
  Statistiques}, 56(4):2897--2934, 2020.

\bibitem{BaikLeeWu}
J.~Baik, J.~O. Lee, and H.~Wu.
\newblock Ferromagnetic to paramagnetic transition in spherical spin glass.
\newblock {\em Journal of Statistical Physics}, 173(5):1484--1522, 2018.

\bibitem{basrak2021extreme}
B.~Basrak, Y.~Cho, J.~Heiny, and P.~Jung.
\newblock {Extreme eigenvalue statistics of m-dependent heavy-tailed matrices},
\newblock {\em Annales de l'Institut Henri Poincare (B) Probabilites et statistiques}, 57(4):2100--2127, 2021.

\bibitem{2014centralheavy}
F.~Benaych-Georges, A.~Guionnet, and C.~Male.
\newblock Central limit theorems for linear statistics of heavy tailed random
  matrices.
\newblock {\em Communications in Mathematical Physics}, 329(2):641--686, 2014.

\bibitem{chen2023rigorous}
W.-K. Chen, H.~Kim, and A.~Sen.
\newblock Some rigorous results on the {L}{\'e}vy spin glass model.
\newblock {\em arXiv preprint arXiv:2403.06084}, 2023.

\bibitem{chen2024disorder}
W.-K.~Chen, H.~Kim, and A.~Sen.
\newblock Disorder chaos in short-range, diluted, and {L}{\'e}vy Spin Glasses.
\newblock {\em arXiv preprint arXiv:2404.09409}, 2024.

\bibitem{cizeau1993mean}
P.~Cizeau and J.-P.~Bouchaud.
\newblock Mean field theory of dilute spin-glasses with power-law interactions.
\newblock {\em Journal of Physics A: Mathematical and General},
  26(5):L187, 1993.

\bibitem{cizeau1994theory}
P.~Cizeau and J.-P.~Bouchaud.
\newblock Theory of L{\'e}vy matrices.
\newblock {\em Physical Review E},
  50(3):1810, 1994.

\bibitem{durrett2019probability}
R.~Durrett.
\newblock {\em Probability: theory and examples}, volume~49.
\newblock Cambridge university press, 2019.

\bibitem{edwards1975theory}
S.~F. Edwards and P.~W. Anderson.
\newblock Theory of spin glasses.
\newblock {\em Journal of Physics F: Metal Physics}, 5(5):965, 1975.

\bibitem{feller1991introduction}
W.~Feller.
\newblock {\em An introduction to probability theory and its applications,
  Volume 2}, volume~81.
\newblock John Wiley \& Sons, 1991.

\bibitem{guerra2003broken}
F.~Guerra.
\newblock Broken replica symmetry bounds in the mean field spin glass model.
\newblock {\em Communications in mathematical physics}, 233:1--12, 2003.

\bibitem{guerra2002thermodynamic}
F.~Guerra and F.~L.~Toninelli.
\newblock The thermodynamic limit in mean field spin glass models.
\newblock {\em Communications in Mathematical Physics},
  230:71--79, 2002.

\bibitem{jagannath2022existence}
A.~Jagannath and P.~Lopatto.
\newblock Existence of the free energy for heavy-tailed spin glasses.
\newblock {\em arXiv preprint arXiv}, 2211.09879, 2022.

\bibitem{janzen2010thermodynamics}
K.~Janzen, A.~Engel, and M.~M{\'e}zard.
\newblock Thermodynamics of the L{\'e}vy spin glass.
\newblock {\em Physical Review E—Statistical, Nonlinear, and Soft Matter Physics},
  82(2):021127, 2010.
	
\bibitem{janzen2008replica}
K.~Janzen, A.~K.~Hartmann, and A.~Engel.
\newblock Replica theory for L{\'e}vy spin glasses.
\newblock {\em Journal of Statistical Mechanics: Theory and Experiment},
  2008(04):P04006, 2008.

\bibitem{johnstone2021spin}
I.~M. Johnstone, Y.~Klochkov, A.~Onatski, and D.~Pavlyshyn.
\newblock Spin glass to paramagnetic transition in spherical
  {S}herrington--{K}irkpatrick model with ferromagnetic interaction.
\newblock {\em arXiv preprint arXiv:2104.07629}, 2021.

\bibitem{kim2023fluctuations}
H.~Kim and J.~O. Lee.
\newblock Fluctuations of the free energy of the spherical
  sherrington-kirkpatrick model with sparse interaction.
\newblock {\em arXiv e-prints}, arXiv:2305.11384, 2023.

\bibitem{KosterlitzThoulessJones}
J.~Kosterlitz, D.~Thouless, and R.~Jones.
\newblock Spherical model of a spin-glass.
\newblock {\em Phys. Rev. Lett.}, 36(20):1217--1220, 1976.

\bibitem{landon2022free}
B.~Landon.
\newblock Free energy fluctuations of the two-spin spherical {SK} model at
  critical temperature.
\newblock {\em Journal of Mathematical Physics}, 63(3):033301, 2022.

\bibitem{landon2022fluctuations}
B.~Landon and P.~Sosoe.
\newblock Fluctuations of the overlap at low temperature in the 2-spin
  spherical sk model.
\newblock In {\em Annales de l'Institut Henri Poincare (B) Probabilites et
  statistiques}, volume~58, pages 1426--1459. Institut Henri Poincar{\'e},
  2022.

\bibitem{lee2023spherical}
J.~O. Lee and Y.~Li.
\newblock Spherical {S}herrington--{K}irkpatrick model for deformed wigner
  matrix with fast decaying edges.
\newblock {\em Journal of Statistical Physics}, 190(2):35, 2023.

\bibitem{mezard2009information}
M.~Mezard and A.~Montanari.
\newblock {\em Information, physics, and computation}.
\newblock Oxford University Press, 2009.


\bibitem{neri2010phase}
I.~Neri, F.~L.~Metz, and D.~Boll{\'e}.
\newblock The phase diagram of L{\'e}vy spin glasses.
\newblock {\em Journal of Statistical Mechanics: Theory and Experiment},
  2010(01):P01010, 2010.

\bibitem{Parisi}
G.~Parisi.
\newblock A sequence of approximate solutions to the {S}-{K} model for spin
  glasses.
\newblock {\em J. Phys. A}, 13(4):L115--L121, 1980.

\bibitem{SKmodel}
D.~Sherrington and S.~Kirkpatrick.
\newblock Solvable model of a spin glass.
\newblock {\em Phys. Rev. Lett.}, 35(26):1792--1796, 1975.

\bibitem{soshnikov2004poisson}
A.~Soshnikov.
\newblock Poisson statistics for the largest eigenvalues of wigner random
  matrices with heavy tails.
\newblock {\em Electronic Communications in Probability}, 9:82--91, 2004.

\bibitem{TalagrandParisiSpher}
M.~Talagrand.
\newblock Free energy of the spherical mean field model.
\newblock {\em Probab. Theory Related Fields}, 134(3):339--382, 2006.

\bibitem{TalagrandParisi}
M.~Talagrand.
\newblock The {P}arisi formula.
\newblock {\em Ann. of Math. (2)}, 163(1):221--263, 2006.




\end{thebibliography}

\end{document}